\renewcommand*{\d}{\mathop{}\!d}
\newcommand{\abs}[1]{\left\lvert#1\right\rvert}
\newcommand{\de}{\delta}
\newcommand{\e}{\varepsilon}
\renewcommand*{\Re}{\operatorname{Re}}
\newcommand{\ZZ}{\mathbb{Z}}
\theoremstyle{plain} 
\newtheorem{theorem}{Theorem}[section]
\newtheorem*{theorem*}{Theorem}
\newtheorem{prop}[theorem]{Proposition}
\newtheorem{lemma}[theorem]{Lemma}
\newtheorem{corollary}[theorem]{Corollary}
\newtheorem{conjecture}[theorem]{Conjecture}
\newtheorem*{example}{Example}
\theoremstyle{remark}
\newtheorem*{remark}{Remark}
\title{Unexpected Biases Between Congruence Classes for Parts in $k$-indivisible Partitions}
\author[F. Jackson]{Faye Jackson}
\address{University of Michigan, 530 Church St, Ann Arbor, MI 48109}
\email{alephnil@umich.edu}
\urladdr{\href{http://www-personal.umich.edu/~alephnil}{http://www-personal.umich.edu/~alephnil}}
\author[M. Otgonbayar]{Misheel Otgonbayar}
\address{Massachusetts Institute of Technology, 77 Massachusetts Avenue
Cambridge, MA 02139-4307}
\email{misheel@mit.edu}
\keywords{Parts in partitions, $k$-regular partitions, Asymptotics, Circle Method, Digamma function}
\subjclass{05A17,11P82,11P81}
\date{\today}
\providecommand\@dotsep{5}
\def\listtodoname{TODOS: Not Done (Red), Cite (Orange), Blue (Notation)}
\def\listoftodos{\@starttoc{tdo}\listtodoname}
\newcommand{\Li}{\operatorname{Li}}
\newcommand{\lr}[1]{\left(#1\right)}
\newcommand{\linecomment}[1]{}
\renewcommand{\P}{\mathcal{P}}
\newcommand{\D}{\mathcal{D}}
\newcommand{\R}{\mathbb{R}}
\newcommand{\CC}{\mathbb{C}}
\newcommand{\Mod}[1]{\ (\mathrm{mod}\ #1)}
\DeclarePairedDelimiter{\floor}{\lfloor}{\rfloor}
\numberwithin{equation}{section}
\newcommand{\eeta}{\eta}
\newcommand{\rbar}{\overline{r}}
\crefname{equation}{equation}{equations}
\crefname{prop}{proposition}{propositions}
\Crefname{prop}{Proposition}{Propositions}
\begin{document}

\begin{abstract}
    For integers $k,t \geq 2$, and $1\leq r \leq t$ let $D_k^\times(r,t;n)$ be the number of parts among all $k$-indivisible partitions of $n$ (i.e., partitions where all parts are not divisible by $k$) of $n$ that are congruent to $r$ modulo $t$. Using Wright's circle method, we derive an asymptotic for $D_k^\times(r,t;n)$ as $n \to \infty$ when $k,t$ are coprime. The main term of this asymptotic does not depend on $r$, and so, in a weak asymptotic sense, the parts are equidistributed among congruence classes. However, inspection of the lower order terms indicates a bias towards different congruence classes modulo $t$. This induces an ordering on the congruence classes modulo $t$, which we call the $k$-indivisible ordering. \linecomment{These unexpected biases depend both on the size of $k$ and the congruence class of $k$ mod $t$.}We prove that for $k \geq \frac{6(t^2-1)}{\pi^2}$ the $k$-indivisible ordering matches the natural ordering. We also explore the properties of these orderings when $k < \frac{6(t^2-1)}{\pi^2}$. 
\end{abstract}

\maketitle


\section{Introduction}

A \textit{partition} $\lambda = (\lambda_1,\ldots,\lambda_\ell)$ of a positive integer $n$ is a nonincreasing sequence of positive integers which sum to $n$. We use the notation $\lambda \vdash n$ to say that $\lambda$ is a partition of $n$. The $\lambda_j$ are called the \textit{parts} of the partition $\lambda$. Counting functions related to partitions occur in nearly all fields of mathematics, and thus asymptotic expansions and exact formulae for these functions are treasured by the mathematical community. Hardy and Ramanujan began the analytic study of partition functions by studying the number of partitions of $n$, which we denote by $p(n)$. Their analytic methods yielded the asymptotic
\begin{equation}
    p(n) \sim \frac{1}{4n\sqrt{3}}e^{\pi\sqrt{\frac{2n}{3}}}.
    \label{eq:hi}
\end{equation}
Hardy and Ramanujan's proof is remarkable due to its invention of the circle method. This method allows one to extract the coefficients of a generating function by understanding its asymptotic behavior near singularities lying on the unit circle, and has seen wide application throughout analytic number theory since its inception.

Recently, this method has been used to understand how many parts among families of partitions of $n$ lie in some specified residue class mod $t$. This question was first explored by Beckwith and Mertens in \cite{beckwith2017,beckwith2015} for the family of all partitions, and was subsequently explored by Craig in \cite{craig} for the family of partitions into distinct parts. Formally, Craig defined\footnote{In \cite{craig}, the function $D(r,t;n)$ is denoted $D_{r,t}(n)$.}
\begin{align*}
	D(r,t;n) \coloneqq \sum_{\substack{\lambda \vdash n \\ \lambda \in \mathcal{D}}} \#\left\{\lambda_j \colon \lambda_j \equiv r \Mod t\right\},
\end{align*}
where $\mathcal{D}$ denotes the set of partitions with distinct parts, and he then proved the asymptotic 
\begin{equation}
    D(r,t;n) = \frac{3^{\frac{1}{4}}e^{\pi\sqrt{\frac{n}{3}}}}{2\pi tn^{\frac{1}{4}}}\left(\log(2) + \left(\frac{\sqrt{3}\log(2)}{8\pi} - \frac{t\pi}{4\sqrt{3}}\left(\frac{r}{t} - \frac{1}{2}\right)\right)n^{-\frac{1}{2}} + O(n^{-1})\right). \label{eq:craig-asym}
\end{equation}
More generally, we can consider partitions where no part is repeated $k$ or more times. These are referred to as the \textit{$k$-regular partitions}, and they are in bijection with the \textit{$k$-indivisible partitions}, which have no parts which are divisible by $k$. This is verified by the simple equality between their generating functions
\[
    \prod_{n \geq 1} \frac{\left(1 - q^{nk}\right)}{\left(1 - q^n\right)} = \prod_{n \geq 1} (1 + q^n + \cdots + q^{n(k-1)}).
\]
Often in the literature, these two types of partitions are both referred to as $k$-regular partitions. However, for the sake of clarity, we will always refer to the partitions with no part divisible by $k$ as the $k$-indivisible partitions. In a recent paper \cite{kregular}, the authors defined
\[
	D_k(r,t;n) \coloneqq \sum_{\substack{\lambda \vdash n \\ \lambda \in \mathcal{D}_k}} \#\left\{\lambda_j \colon \lambda_j \equiv r \Mod t\right\},
\]
where $\mathcal{D}_k$ denotes the set of $k$-regular partitions, and proved the asymptotic
\begin{equation}
	D_{k}(r,t;n) = \frac{3^{\frac{1}{4}}e^{\pi\sqrt{\frac{2Kn}{3}}}}{\pi t 2^{\frac{3}{4}}K^{\frac{1}{4}}n^{\frac{1}{4}}\sqrt{k}}\left(\log k + \left(\frac{3\sqrt{K}\log k}{8\sqrt{6}\pi} - \frac{t\pi(k-1)K^{\frac{1}{2}}}{2\sqrt{6}}\left(\frac{r}{t}- \frac{1}{2}\right)\right)n^{-\frac{1}{2}} + O(n^{-1})\right) \label{eq:regular-asym},
\end{equation}
where $K \coloneqq 1 - 1/k$. Notice that this is a strict generalization of Craig's work in \cite{craig}, as the $2$-regular partitions are exactly the partitions into distinct parts, and in fact the explicit error terms derived in \cite{kregular} improve upon those derived in \cite{craig}. In this paper, we derive an asymptotic formula for the number of parts congruent to $r$ mod $t$ among the $k$-indivisible partitions of $n$. We define
\begin{align}
	D_k^\times(r,t;n) \coloneqq \sum_{\substack{\lambda \vdash n \\ \lambda \in \mathcal{D}_k^\times}} \#\left\{\lambda_j \colon \lambda_j \equiv r \Mod t\right\} \label{eq:dk-times-defn},
\end{align}
where $\D_k^\times$ denotes the set of $k$-indivisible partitions. We prove the following asymptotic formula for $D_k^\times(r,t;n)$.
\begin{theorem}\label{thm:indiv-asym}
     Let $k,t \geq 2$ be coprime integers and let $1 \leq r \leq t$. If $K \coloneqq 1 - 1/k$ and $1 \leq \bar{r} \leq t$ is a representative of $k^{-1}r$ modulo $t$, then as $n \to \infty$,
     \begin{align*}
          D_k^\times(r,t;n) = \frac{3^{\frac{1}{4}}e^{\pi\sqrt{\frac{2Kn}{3}}}}{2^{\frac{3}{4}}K^{\frac{1}{4}}n^{\frac{1}{4}}\pi t\sqrt{k}}\left(\frac{K}{2}\log n + \left(- \psi\left(\frac{r}{t}\right) +k^{-1}\psi\left(\frac{\bar{r}}{t}\right)\right) + C_{k,t} + O\left(n^{-\frac{1}{2}}\log n\right)\right),
     \end{align*}
     where $\psi(x) \coloneqq \frac{\Gamma'(x)}{\Gamma(x)}$ is the digamma function and
     \[
        C_{k,t} \coloneqq \frac{K}{2}\log\left(\pi\sqrt{\frac{K}{6}}\right) - K \log t + \frac{\log k}{k}.
     \]
\end{theorem}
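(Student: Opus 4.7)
I would begin with the generating-function identity
\[
    \sum_{n\geq 0} D_k^\times(r,t;n)\,q^n \;=\; L_k^\times(r,t;q)\cdot F(q),
\]
where $F(q)\coloneqq\prod_{k\nmid n}(1-q^n)^{-1}$ generates $k$-indivisible partitions and
\[
    L_k^\times(r,t;q)\coloneqq\sum_{\substack{m\geq 1\\ k\nmid m,\,m\equiv r\,(t)}}\frac{q^m}{1-q^m}
\]
marks the parts being counted. The dependence on $\bar{r}=k^{-1}r \pmod t$ enters through the clean decomposition
\[
    L_k^\times(r,t;q)=L(r,t;q)-L(\bar{r},t;q^k),\qquad L(r,t;q)\coloneqq\sum_{m\equiv r\,(t)}\frac{q^m}{1-q^m},
\]
which uses $\gcd(k,t)=1$ so that $kj\equiv r\pmod t$ iff $j\equiv \bar{r}\pmod t$. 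I would then apply Wright's circle method to $G(q)\coloneqq L_k^\times(r,t;q)F(q)$, mirroring the treatments in \cite{craig,kregular}.

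\textbf{Asymptotics near $q=1$.} With $q=e^{-z}$ and $z\to 0^+$, the modular transformation $\eta(-1/\tau)=\sqrt{-i\tau}\,\eta(\tau)$ applied to the quotient $F(q)=\prod(1-q^{kn})/\prod(1-q^n)$ yields
\[
    F(e^{-z})=\frac{1}{\sqrt{k}}\exp\!\left(\frac{K\pi^2}{6z}\right)\bigl(1+O(z)\bigr).
\]
For $L(r,t;e^{-z})=\sum_{m\equiv r\,(t)}(e^{zm}-1)^{-1}$ I would apply a Mellin transform. Using $\sum_{m\equiv r\,(t)}m^{-s}=t^{-s}\zeta(s,r/t)$ and the classical identity $\int_0^\infty (e^y-1)^{-1}y^{s-1}\,dy=\Gamma(s)\zeta(s)$, one obtains
\[
    L(r,t;e^{-z})=\frac{1}{2\pi i}\int_{(c)}\Gamma(s)\zeta(s)\,t^{-s}\zeta(s,r/t)\,z^{-s}\,ds
\]
for $c>1$. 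Shifting the contour left past the double pole at $s=1$ (produced by the product $\zeta(s)\zeta(s,r/t)$) and using the Laurent expansion $\zeta(s,a)=(s-1)^{-1}-\psi(a)+O(s-1)$ gives the residue $-[\log(tz)+\psi(r/t)]/(tz)$. Subtracting the analogous expansion for $L(\bar{r},t;e^{-kz})$ and using $\log(tkz)=\log(tz)+\log k$ assembles
\[
    L_k^\times(r,t;e^{-z})=\frac{1}{tz}\!\left[-K\log(tz)-\psi\!\left(\tfrac{r}{t}\right)+\tfrac{1}{k}\psi\!\left(\tfrac{\bar{r}}{t}\right)+\tfrac{\log k}{k}\right]+O(1),
\]
the coefficient $-(1-1/k)=-K$ of $\log(tz)$ arising from the cancellation between the two logarithms. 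Residues at $s=0,-1,\ldots$ contribute lower-order pieces absorbed in the error.

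\textbf{Wright's circle method.} Writing $G(e^{-z})=h(z)\exp(K\pi^2/(6z))$ with $h$ the explicit prefactor just derived, I would extract $D_k^\times(r,t;n)=[q^n]G(q)$ via Cauchy's formula on the circle $|q|=e^{-z_0}$, with saddle point $z_0=\pi\sqrt{K/(6n)}$ where the exponent $nz+K\pi^2/(6z)$ is stationary (value $\pi\sqrt{2Kn/3}$). On the major arc $|\Im z|\leq z_0^{1+\delta}$ for a small $\delta>0$, a Gaussian saddle-point expansion gives
\[
    [q^n]G(q)\sim h(z_0)\cdot\frac{(K\pi^2/6)^{1/4}}{2\sqrt{\pi}\,n^{3/4}}\,e^{\pi\sqrt{2Kn/3}}.
\]
Substituting $\log z_0=\log(\pi\sqrt{K/6})-\tfrac12\log n$ inside the bracket of $h(z_0)$ produces the leading $\tfrac{K}{2}\log n$, the digamma correction $-\psi(r/t)+k^{-1}\psi(\bar{r}/t)$, and the remaining constants that comprise $C_{k,t}$. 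The Wright prefactor combined with the $1/(tz_0\sqrt{k})$ factor from $h$ simplifies (using $6^{1/4}/2=3^{1/4}/2^{3/4}$) to $\frac{3^{1/4}}{2^{3/4}K^{1/4}n^{1/4}\pi t\sqrt{k}}$, as in the statement.

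\textbf{Main obstacle.} The technically heaviest step is the minor-arc bound. The eta transformation at rational points of the unit circle controls $F(e^{-z})$ away from $q=1$, but the logarithmic factor in $h(z)$ coming from the double Mellin pole forces one to obtain uniform estimates for $L_k^\times(r,t;e^{-z})$ on the entire minor arc. This combines elementary partial-fraction and Euler--Maclaurin estimates for $L(r,t;q)$ with the eta transformation, arranged so that the combined contribution is beaten by $e^{nz_0}$ by a factor strong enough to yield the stated $O(n^{-1/2}\log n)$ error. Securing clean, effective constants through both the major-arc saddle-point expansion and the minor-arc bound will likely require adapting the detailed circle-method bookkeeping developed in \cite{kregular}.
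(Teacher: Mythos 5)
Your overall architecture matches the paper's: same generating-function factorization $\mathcal{D}_k^\times(r,t;q)=\xi_k(q)L_k^\times(r,t;q)$, same use of coprimality to split $L_k^\times(r,t;q)=L(r,t;q)-L(\bar r,t;q^k)$, same modular transformation for the quotient $\xi_k$, and Wright's circle method to extract coefficients. The genuine difference is in how you obtain the major-arc expansion of $L_k^\times$. You use a Mellin transform, writing $L(r,t;e^{-z})$ as a contour integral of $\Gamma(s)\zeta(s)t^{-s}\zeta(s,r/t)z^{-s}$ and picking up the digamma and logarithm from the Laurent expansion of $\zeta(s,a)$ around the double pole at $s=1$; the paper instead uses the asymptotic Euler--Maclaurin lemma of Bringmann--Craig--Males--Ono (their Lemma~2.2) applied to $E_\times(z)=e^{-z}/(1-e^{-z})$, whose pole at $z=0$ is what produces the $\psi(a)$ and $\log z$ terms. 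Both routes are standard and land on exactly the same expansion (your residue $-[\log(tz)+\psi(r/t)]/(tz)$ agrees with the paper's Lemma~3.6 and Corollary~3.7), so this is a legitimate alternative: the Mellin route is perhaps cleaner for this single term, whereas the Euler--Maclaurin route is what the cited papers in this line of work use and generalizes to the higher-order corrections more transparently. Where I'd push back on your write-up is the circle-method step: you apply a single Gaussian saddle-point pass to $G(q)=h(z)e^{K\pi^2/(6z)}$ and "substitute $\log z_0$," but the presence of the $\log z$ factor in $h$ means the standard Wright black-box (\cite[Prop.~4.4]{bringmannCircle}, the paper's Theorem~2.5) does not directly apply, and the bookkeeping that makes the naive substitution legitimate to the stated error $O(n^{-1/2}\log n)$ is precisely the content of the Ngo--Rhoades variant (\cite[Prop.~1.9]{ngo}, the paper's Theorem~2.6). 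The paper sidesteps this by splitting $\mathcal{D}_k^\times(r,t;q)$ into a "polynomial-type" piece $\xi_k\bigl(L_k^\times+\tfrac{K\log z}{tz}\bigr)$ handled by Theorem~2.5 and a "logarithmic-type" piece $\xi_k\cdot\tfrac{K\log z}{tz}$ handled by Theorem~2.6, then recombining. Your proposal is correct in spirit but this split-and-recombine step is where the clean constant $C_{k,t}$ is actually secured, and you should carry it out explicitly (or cite Ngo--Rhoades as the paper does) rather than treat it as a one-line saddle-point.
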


\begin{remark}
    Here we restrict to the case where $k,t$ are coprime for aesthetic reasons. Our method extends to the non-coprime case; however, this would require additional computation and casework while not revealing any deeper structure among the biases for $D_k^\times(r,t;n)$. For a more detailed analysis of this choice, see the discussion following \Cref{lemma:lrt-times-e}.
\end{remark}

\begin{example}
	\Cref{thm:indiv-asym} may also be used to obtain accurate numerical estimates for $D_k^\times(r,t;n)$. More precisely, let $\widehat{D}^\times_k(r,t;n)$ denote the asymptotic obtained in \Cref{thm:indiv-asym} by ignoring all terms which are $O\left(n^{-\frac{3}{4}}\log(n)e^{\pi\sqrt{\frac{2Kn}{3}}}\right)$, and define the quotient $Q_k^\times(r,t;n) \coloneqq \frac{D_k^\times(r,t;n)}{\widehat{D}^\times_k(r,t;n)}$. \Cref{fig:indiv-asym-numerics} shows the convergence of $Q_k^\times(r,t;n)$ to $1$ as $n \to \infty$.
\end{example}

\begin{figure}[ht]
    \centering
    \begin{tabular}{ | c |c | c | c | c | c | c |}
         \hline $n$ & 10 & 100 & 1000 & 10000 & 100000 & 1000000 \\
         \hline
         $Q_3^\times(1,4;n)$ & 0.95865 & 0.98376 &0.99054 & 0.99260 & 0.99355 & 0.99419 \\
         \hline
         $Q_3^\times(2,4;n)$ & 1.08452 & 0.99408 & 0.98952 & 0.98943 & 0.99044 & 0.99156\\
         \hline
         $Q_4^\times(1,5;n)$ & 0.92882 & 0.97154 & 0.98102 & 0.98437 & 0.98617 & 0.98746\\
         \hline
         $Q_4^\times(2,5;n)$ & 0.93232 & 0.96178 & 0.97154 & 0.97618 & 0.97947 & 0.98203 \\
         \hline
    \end{tabular}
    \caption{Numerics for \Cref{thm:indiv-asym}}
    \label{fig:indiv-asym-numerics}
\end{figure}

Because the main terms in \cref{eq:regular-asym} as well as in \Cref{thm:indiv-asym} do not depend on $r$, asymptotically the parts are equidistributed among congruence classes modulo $t$. Namely, if we let $P_k(n)$ (respectively, $P_k^\times(n)$) denote the total number of parts in $k$-regular partitions (respectively $k$-indivisible) partitions of $n$, then $\frac{D_k(r,t;n)}{P_k(n)}$ approaches $\frac{1}{t}$ as $n \to \infty$ and $\frac{D_k^\times(r,t;n)}{P_k^\times(n)}$ also approaches $\frac{1}{t}$ as $n \to \infty$ if $k,t$ are coprime. However, this weak asymptotic equidistribution does not imply that there are no biases between residue classes. In fact, analysis of the lower order terms uncovers the true nature of this bias. In particular, for \cref{eq:regular-asym}, $\frac{r}{t}$ is an increasing function in $1 \leq r \leq t$, which implies biases towards parts lying in lower congruence classes mod $t$. The asymptotic derived the authors thus implies that $D_k(r,t;n) \geq D_k(s,t;n)$ for $1 \leq r < s \leq t$ when $n$ is sufficiently large. This matches the results of Beckwith and Mertens in \cite{beckwith2017} concerning the family of all partitions as well as the results of Craig in \cite{craig} for the family of partitions into distinct parts. In contrast, the biases obtained for the $k$-indivisible partitions are not nearly as predictable, and depend greatly on both the size of $k$ as well as the residue class of $k$ mod $t$. For brevity, we define
\begin{equation}
    \psi_{k, t}(r) \coloneqq -\psi\left(\frac{r}{t}\right) + \frac1k \psi\left(\frac{\overline{r}}{t}\right) \label{eq:psi-kt-defn}.
\end{equation}
Using this notation, we have the following corollary exhibiting these biases.
\begin{corollary}\label{cor:indiv-diff}
     Let $k,t \geq 2$ be coprime integers and let $1 \leq s, r \leq t$. If $K \coloneqq 1 - 1/k$, then
     \begin{align*}
          D_k^\times(r,t;n) - D_k^\times(s,t;n) = \frac{3^{\frac{1}{4}}e^{\pi\sqrt{\frac{2Kn}{3}}}}{2^{\frac{3}{4}}K^{\frac{1}{4}}n^{\frac{1}{4}}\pi t\sqrt{k}}
          \left(\psi_{k, t}(r) - \psi_{k,t}(s) + O\left(n^{-\frac{1}{2}}\log n\right)\right).
     \end{align*}
     Furthermore, if we have
     \[ 
        \psi_{k,t}(r) > \psi_{k, t}(s),
     \]
     then for sufficiently large $n$ we have that $D_k^\times(r,t;n) > D_k^\times(s,t;n)$.
\end{corollary}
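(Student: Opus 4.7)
The plan is to deduce this corollary directly from Theorem~\ref{thm:indiv-asym} by subtraction. First I would apply Theorem~\ref{thm:indiv-asym} twice, once for $D_k^\times(r,t;n)$ and once for $D_k^\times(s,t;n)$, writing out both asymptotic expansions with the same prefactor $\frac{3^{1/4} e^{\pi\sqrt{2Kn/3}}}{2^{3/4} K^{1/4} n^{1/4} \pi t \sqrt{k}}$. Since the prefactor is independent of $r$, subtracting one from the other gives exactly the difference stated in the corollary, provided I check that the $r$-independent contributions inside the parentheses cancel and the $r$-dependent contributions combine as expected.

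The key observation is that within the parenthesized factor of the asymptotic expansion in Theorem~\ref{thm:indiv-asym}, the terms $\frac{K}{2}\log n$ and $C_{k,t}$ do not depend on $r$, and the only $r$-dependence appears in the expression $-\psi(r/t) + k^{-1}\psi(\bar{r}/t) = \psi_{k,t}(r)$, as defined in \eqref{eq:psi-kt-defn}. Therefore, upon subtraction, these constant-in-$r$ contributions cancel exactly, leaving $\psi_{k,t}(r) - \psi_{k,t}(s)$ inside the parentheses. The error terms $O(n^{-1/2}\log n)$ combine by the triangle inequality to give another $O(n^{-1/2}\log n)$, yielding the first claimed formula.

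For the second part of the corollary, suppose $\psi_{k,t}(r) > \psi_{k,t}(s)$. Then the quantity $\psi_{k,t}(r) - \psi_{k,t}(s)$ is a strictly positive constant (depending only on $k,t,r,s$), while the error term $O(n^{-1/2}\log n)$ tends to $0$ as $n \to \infty$. Hence, for all sufficiently large $n$, the expression in parentheses is strictly positive. Since the prefactor $\frac{3^{1/4} e^{\pi\sqrt{2Kn/3}}}{2^{3/4} K^{1/4} n^{1/4} \pi t \sqrt{k}}$ is positive for all $n \geq 1$, we conclude $D_k^\times(r,t;n) > D_k^\times(s,t;n)$ for $n$ sufficiently large.

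There is no real obstacle here: the corollary is a formal consequence of Theorem~\ref{thm:indiv-asym}, and the only thing to be careful about is identifying which summands depend on $r$ and which do not, together with the routine observation that a fixed positive constant eventually dominates a vanishing error term. The entire argument fits comfortably in a short paragraph.
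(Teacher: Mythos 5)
Your proposal is correct and is essentially the paper's (implicit) argument: the paper presents the corollary as an immediate consequence of Theorem~\ref{thm:indiv-asym} without a separate proof, and the deduction is exactly the subtraction you describe, with the $r$-independent terms $\frac{K}{2}\log n$ and $C_{k,t}$ cancelling and the remaining $r$-dependent piece matching the definition of $\psi_{k,t}$ in \eqref{eq:psi-kt-defn}.
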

In light of these biases, we may define an ordering on the residue classes $\{1,\ldots,t\}$ mod $t$. For $1\leq r, s\leq t$, we write $r\prec_{k, t}s$ (resp. $r \succ_{k, t} s$) if $D_k^\times(r, t;n) < D_k^\times(s, t;n)$ (resp. $D_k^\times(r, t;n)>D_k^\times(s, t;n)$) for all sufficiently large $n$. Also, let $\mathscr{O}(t)$ be the number of distinct orderings of $\{1, \dots t\}$ induced by $\prec_{k, t}$ over all $k$ such that $k \geq 2$ and $\gcd(k,t)=1$.

\begin{remark}
    It is not immediately clear from \Cref{cor:indiv-diff} that $\prec_{k,t}$ is in fact a total ordering on $\{1,\ldots,t\}$, as we may have that $\psi_{k,t}(r) = \psi_{k,t}(s)$.  However, numerical evidence suggests that there are no such pairs $r,s$ for any coprime $k,t \geq 2$. Furthermore, deep work of Gun, Murty, and Rath in \cite{murty} related to the vanishing of $L$-functions has shown that there exists a $t_0$ such that the set $\{\psi(a/t) \mid \gcd(a,t) = 1\}$ is linearly independent over $\mathbb{Q}$ for any $t$ coprime to $t_0$. Thus, for $r,s$ coprime to $t$, and $t$ coprime to $t_0$, there are no equalities in the second order term.
\end{remark}

Based on the numerical evidence mentioned in the above remark, we make the following conjecture, which implies that for coprime $k,t \geq 2$ and $r \neq s$, either the congruence class $r$ or $s$ mod $t$ is strictly more common among $k$-indivisible partitions of $n$.

\begin{conjecture}\label{conj:second-order-bias}
    For coprime integers $k,t \geq 2$ and $1 \leq r < s \leq t$, we have that $\psi_{k,t}(r) \neq \psi_{k,t}(s)$.
\end{conjecture}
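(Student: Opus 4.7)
The plan is to recast $\psi_{k,t}(r) \neq \psi_{k,t}(s)$ as a statement about $\mathbb{Q}$-linear independence of digamma values at rational arguments. Unwinding the definition \eqref{eq:psi-kt-defn}, the conjectured inequality is equivalent to the nonvanishing of
\[
    k\bigl(\psi(s/t) - \psi(r/t)\bigr) - \bigl(\psi(\overline{s}/t) - \psi(\overline{r}/t)\bigr),
\]
which is a $\mathbb{Z}$-linear combination of at most four digamma values with coefficients drawn from $\{\pm k, \pm 1\}$. The first step is casework on which of the four indices $r, s, \overline{r}, \overline{s}$ coincide modulo $t$. Using that $r \mapsto \overline{r}$ is a bijection on $\mathbb{Z}/t\mathbb{Z}$ (so $r \neq s$ forces $\overline{r} \neq \overline{s}$) and that $k \geq 2$, one verifies in each case that collecting like terms leaves a nontrivial $\mathbb{Z}$-linear combination; for instance, the maximally degenerate collision $\overline{r} = s$, $\overline{s} = r$ collapses the expression to $(k+1)(\psi(s/t) - \psi(r/t))$, which is nonzero because $\psi$ is strictly increasing on $(0, \infty)$.

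The next step is to rule out this nontrivial relation via transcendence theory. When $\gcd(rs, t) = 1$ and $t$ is coprime to the integer $t_0$ of \cite{murty}, the set $\{\psi(a/t) : \gcd(a,t) = 1\}$ is $\mathbb{Q}$-linearly independent, resolving this subcase (as noted in the remark following \Cref{cor:indiv-diff}). For general $r, s$, one first reduces each digamma to lowest terms using $\psi(r/t) = \psi(r'/t')$ when $r/t = r'/t'$; the relation becomes a $\mathbb{Z}$-combination of values $\psi(a/q)$ for various $q \mid t$ with $\gcd(a, q) = 1$. One would then appeal to the stronger multi-denominator linear independence available in the Gun--Murty--Rath framework, and separately verify that the resulting relation is not expressible through the only known identity, Gauss's multiplication formula $\sum_{a=0}^{q-1} \psi(x + a/q) = q\psi(qx) - q \log q$ (specialized at rational $x$).

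The hard part is the dependence on $t_0$: the theorem of \cite{murty} produces an unspecified $t_0$, and removing this restriction is essentially equivalent to an unconditional form of the Rohrlich--Lang conjecture on linear independence of log-gamma values at rational arguments, which remains widely open. For fixed small $t$, one could in principle substitute Gauss's explicit formula
\[
    \psi(p/q) = -\gamma - \log(2q) - \frac{\pi}{2}\cot(\pi p/q) + 2 \sum_{j=1}^{\lfloor (q-1)/2 \rfloor} \cos(2\pi pj/q) \log \sin(\pi j/q)
\]
and invoke Baker's theorem on linear forms in logarithms of algebraic numbers; however, the $\pi\cot$ contributions introduce a transcendental factor that must be delicately balanced against the coefficients $\pm k, \pm 1$. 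Consequently, we expect the conjecture to follow from standard transcendence heuristics and to be verifiable in any given instance, but a fully unconditional proof valid for all coprime $k, t \geq 2$ lies beyond current techniques.
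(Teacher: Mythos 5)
This statement is labeled a \emph{conjecture} in the paper, and the paper does not prove it; the authors only offer numerical evidence and, in the remark immediately preceding the conjecture, cite the Gun--Murty--Rath result on $\mathbb{Q}$-linear independence of $\{\psi(a/t) : \gcd(a,t)=1\}$ for $t$ coprime to some unspecified $t_0$, which handles the subcase $\gcd(rs,t)=1$ with $t$ coprime to $t_0$. Your proposal correctly recognizes that no unconditional proof is available, and your discussion essentially replicates and modestly extends the paper's own remark: you reformulate the statement as nonvanishing of the $\mathbb{Z}$-linear form $k(\psi(s/t)-\psi(r/t)) - (\psi(\overline{s}/t)-\psi(\overline{r}/t))$, dispose of the degenerate collisions (e.g., $\overline{r}=s$, $\overline{s}=r$ collapses to $(k+1)(\psi(s/t)-\psi(r/t)) \ne 0$ by monotonicity of $\psi$, a correct and worthwhile observation), invoke the same Gun--Murty--Rath theorem for the coprime subcase, and correctly identify the obstruction for general $r,s$ as being tied to Rohrlich--Lang-type conjectures on log-gamma values. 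Your algebra is correct and the casework on coincidences among $r,s,\overline{r},\overline{s}$ is exactly the right first move; your added detail about reducing $\psi(r/t)$ to lowest terms and the potential intrusion of Gauss's multiplication formula is a genuine refinement not spelled out in the paper. In short, since the paper offers no proof, your submission is appropriately framed as a conditional analysis rather than a proof, and it is consistent with and somewhat more detailed than the paper's treatment.
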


As we have established, due to the interaction between the multiplicative structures modulo $t$ and modulo $k$, the biases among congruence classes for $D_k^\times(r,t;n)$ are much more complex than the biases for $T(r,t;n)$ or $D_k(r,t;n)$. We now give an example to illustrate the complexity of the biases among $k$-indivisible partitions.

\begin{example}
    In \Cref{fig:indivis-biases} we give a table of all possible orderings of $\{1,\ldots,t\}$ by $\prec_{k,t}$ for $t = 7$. Notice that for $k = 2$, the congruence class $2 \Mod 7$ occurs in the fifth position. The bias against the residue class $2 \Mod 7$ may be explained by the fact that smallest allowed part which is $2 \Mod 7$ is $9$. Similarly, for $k = 6,13,10,20$ the transposition of $7,6 \Mod 7$ may be accounted for because the congruence class of $6 \Mod 7$ includes $6,13,20$ as its smallest members. However, for $k = 12$, there is a bias against the residue class $5 \Mod 7$ because its \textit{second} smallest member is excluded as a part in $12$-indivisible partitions. Thus the ordering is not induced by the natural ordering on integers, and even further, it is not even induced by the ordering on integers once we lift to the smallest allowed part in $k$-indivisible partitions. In total, we also see that the total number of orderings, $\mathscr{O}(7)$, is seven.
\end{example}

\begin{figure}[ht]
    \centering
    \begin{tabular}{ | c | c | c | c | c | c | c | c |}
        \hline 
        $k = 2$ & 1 & 3 & 5 & 7 & 2 & 4 & 6 \\
        \hline
        $k = 3$ & 1 & 2 & 4 & 5 & 7 & 3 & 6 \\ 
        \hline
        $k = 4$ & 1 & 2 & 3 & 5 & 6 & 7 & 4 \\
        \hline
        $k = 5$ & 1 & 2 & 3 & 4 & 6 & 7 & 5 \\
        \hline
        $k = 6,10,13,20$ & 1 & 2 & 3 & 4 & 5 & 7 & 6 \\
        \hline
        $k = 12$ & 1 & 2 & 3 & 4 & 6 & 5 & 7 \\
        \hline
        All other $k$  & 1 & 2 & 3 & 4 & 5 & 6 & 7 \\ \hline
    \end{tabular}
    \caption{Biases among congruence classes mod $t$ for $k$-indivisible partitions for $t = 7$, from most common to least common.}
    \label{fig:indivis-biases}
\end{figure}
Despite the intricacy present in \Cref{fig:indivis-biases}, closer inspection of the second order term $\psi_{k,t}$ reveals interesting patterns among the orderings.
We then have the following theorem.
\newline
\begin{theorem}\label{thm:k-indiv-biases}
    Let $k,t \geq 2$ be coprime integers. Then the orderings $\prec_{k,t}$ satisfy the following: 
    \begin{enumerate}[label={(\arabic*)},leftmargin=*]
        \item\label{item:r+k} If $1 \leq r \leq t - k$, then $r \succ_{k,t} r+k$.
        \item\label{item:fixing} If $1 \leq r \leq y \leq t$ and $r < s \leq t$, then for $k \geq y(y+1)$, $r \succ_{k,t} s$. Notably, when $y=1$, $1 \succ_{k, t} s$ holds for any $k, s, t \geq 2$.
        \item\label{item:t2} If $k \geq \frac{6(t^2-1)}{\pi^2}$, then for $1 \leq r < s \leq t$, we have $r \succ_{k,t} s$.
        \item\label{item:high-unnatural} If $k = mt - 1$ for $m \geq 1$, then for $k \leq \left(\frac{\pi^2}{6} + \frac{5}{2t}\right)^{-1}(t^2 - 1)$ we have that $t \succ_{k,t} t - 1$.
        \item\label{item:supralinear} If $t > 2$ and $\varphi(\cdot)$ is the Euler's totient function, then $\mathscr{O}(t) \geq \frac{\varphi(t)}{2}$.
    \end{enumerate}
\end{theorem}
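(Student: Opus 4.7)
The plan is to analyze the sign of
\[
    \psi_{k,t}(r) - \psi_{k,t}(s) = \left[\psi\!\left(\tfrac{s}{t}\right) - \psi\!\left(\tfrac{r}{t}\right)\right] - \tfrac{1}{k}\!\left[\psi\!\left(\tfrac{\bar{s}}{t}\right) - \psi\!\left(\tfrac{\bar{r}}{t}\right)\right]
\]
via \Cref{cor:indiv-diff}, using the series representations
\[
    \psi(y) - \psi(x) = \sum_{n \ge 0}\frac{y-x}{(n+x)(n+y)}, \qquad \psi'(x) = \sum_{n \ge 0}\frac{1}{(n+x)^2},
\]
together with the monotonicity and concavity of $\psi$. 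Two key arithmetic inputs: since $k\bar{r}\equiv r\pmod t$ with $\bar{r},r\in[1,t]$, one has $k\bar{r}\ge r$ (else $r-k\bar{r}$ would be a positive multiple of $t$ exceeding $t-1$); and $\overline{r+k}=\bar{r}+1$ whenever $\bar{r}<t$.

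For \ref{item:r+k}, I would expand both $\psi((r+k)/t)-\psi(r/t)$ and $\psi((\bar{r}+1)/t)-\psi(\bar{r}/t)$ as series. Using $\bar{r}\ge r/k$ and $\bar{r}+1\ge(r+k)/k$, one verifies that for each $n\ge 0$, $\tfrac{k}{(nt+r)(nt+r+k)} \ge \tfrac{1}{k(nt+\bar{r})(nt+\bar{r}+1)}$, with strict inequality for $n\ge 1$ (since $tk>t$). For \ref{item:fixing}, the $n=0$ term alone gives $\psi(s/t)-\psi(r/t)\ge t(s-r)/(rs)\ge t/[y(y+1)]$ using $r\le y$ and $s\ge r+1$, while the upper bound $\psi(1)-\psi(1/t)<(t^2-1)/t$ controls the correction by $t/k$, yielding $k\ge y(y+1)$. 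Part \ref{item:t2} sharpens this to $\psi(s/t)-\psi(r/t)\ge\pi^2(s-r)/(6t)$ via $\psi'(u)\ge\psi'(1)=\pi^2/6$ on $(0,1]$, giving the sharp threshold $k\ge 6(t^2-1)/\pi^2$.

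For \ref{item:high-unnatural}, the condition $k\equiv-1\pmod t$ gives $\bar{r}=t-r$ for $r<t$ and $\bar{t}=t$, so $t\succ_{k,t} t-1$ becomes $k[\psi(1)-\psi((t-1)/t)]<\psi(1)-\psi(1/t)$. I would use the identity $t[\psi(1)-\psi((t-1)/t)]=\sum_{n\ge 0}1/[(n+1)(n+1-1/t)]$, isolating $t/(t-1)$ from $n=0$ and bounding the tail by $(\pi^2/6-1)\cdot 2t/(2t-1)$ via $(n+1-1/t)^{-1}\le (n+1)^{-1}\cdot 2t/(2t-1)$ for $n\ge 1$; after elementary simplification this gives $t[\psi(1)-\psi((t-1)/t)]\le\pi^2/6+5/(2t)$, which combined with a matching lower bound on $\psi(1)-\psi(1/t)$ yields the stated sufficient condition on $k$.

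For \ref{item:supralinear}, I would exhibit $\varphi(t)/2$ distinct orderings by selecting one $k$ from each of the $\varphi(t)/2$ pairs $\{k,t-k\}\subset\{2,\ldots,t-1\}$ of coprime residues (these pairs are well-defined since $k=t-k$ would force $2k\equiv 0\pmod t$ with $\gcd(k,t)=1$, impossible for $t>2$). Different pairs yield different permutations $r\mapsto \bar{r}$, and I expect the explicit reversals forced by \ref{item:r+k} --- together with the constraints from \ref{item:t2} and \ref{item:fixing} --- to distinguish the orderings arising from different pairs. The main obstacle is \ref{item:supralinear}, where there is no uniformly clean comparison; one must carefully locate, for each pair, a specific pair of residues whose order distinguishes it from the orderings coming from other pairs. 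A secondary subtlety is the explicit constant $5/(2t)$ in \ref{item:high-unnatural}, which requires careful bookkeeping in the tail estimate.
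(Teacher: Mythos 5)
Your treatment of parts (1)--(4) is correct, and your argument for \ref{item:r+k} is actually cleaner than the paper's. The paper establishes \ref{item:r+k} by splitting on the size of $\bar r$ relative to $m$ where $mk \le t < (m+1)k$, invoking a scaling lemma ($k\psi_{ak}(xk) > \psi_a(x)$) for small $\bar r$ and a separate term-by-term argument for $\bar r \ge m+1$. Your single uniform comparison---expanding both $\psi((r+k)/t)-\psi(r/t)$ and $\tfrac1k[\psi((\bar r+1)/t)-\psi(\bar r/t)]$ as series and comparing termwise using only $k\bar r \ge r$---avoids this case split entirely, since $k^2(nt+\bar r)(nt+\bar r+1) \ge (nt+r)(nt+r+k)$ follows directly from $k(nt+\bar r) \ge nt+r$ and $k(nt+\bar r+1) \ge nt+r+k$, with strictness for $n\ge 1$. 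For \ref{item:fixing} you extract the same bound the paper does from the $n=0$ term (the paper bounds $\psi((y+1)/t)-\psi(y/t)$ using a two-sided lemma, but the thresholds agree), and for \ref{item:t2} your route through $\psi'(\xi) > \psi'(1)=\pi^2/6$ via the mean value theorem is a tidy substitute for the paper's series estimate; both are equally valid. Your tail estimate in \ref{item:high-unnatural} uses the factor $2t/(2t-1)$ while the paper uses $(n + (t-1)/t)^{-1} \le n^{-1}$, but both land on the same $\pi^2/6 + 5/(2t)$ bound.

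The genuine gap is \ref{item:supralinear}, and you flag it yourself. Your plan---pick one $k$ from each coprime pair $\{k, t-k\}$ and hope that ``the explicit reversals forced by \ref{item:r+k}'' plus ``constraints from \ref{item:t2} and \ref{item:fixing}'' distinguish the resulting orderings---has no mechanism for actually separating two orderings. Parts \ref{item:r+k}--\ref{item:t2} only constrain individual orderings; they give you no comparison \emph{between} $\prec_{k,t}$ and $\prec_{k',t}$ for $k\ne k'$. The missing idea is a single structural invariant that reads off $k$ from the ordering itself. The paper proves that for every $k \in (t/2, t)$ coprime to $t$, the congruence class $k \bmod t$ is the \emph{unique least} element under $\prec_{k,t}$, i.e.\ $\psi_{k,t}(r) > \psi_{k,t}(k)$ for all $r \ne k$. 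Since the minimal element then determines $k$, the $\varphi(t)/2$ values of $k$ in $(t/2,t)$ coprime to $t$ produce $\varphi(t)/2$ pairwise distinct orderings. Establishing that $k$ is the least element is itself nontrivial---the paper localizes $k \in \bigl(\frac{m-1}{m}t, \frac{m}{m+1}t\bigr)$, splits on $\bar r \le m$ versus $\bar r \ge m+1$, handles $m=2$ separately, and uses both the lower bound $\psi(a)-\psi(b) > (a-b)(\tfrac1{ab} + \tfrac1{b+1})$ and the sharpened upper bound for $a,b>\tfrac12$---so this is a substantial piece of work that your sketch does not supply.
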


\begin{remark}
    Observe that \vspace{0.1in}
    
    \noindent (i) Statement \ref{item:t2} indicates that when $k > \frac{6(t^2-1)}{\pi^2}$ the ordering reverts to the natural ordering observed by Beckwith and Mertens, Craig, and the authors in \cite{beckwith2017,craig,kregular}. Furthermore, \ref{item:high-unnatural} indicates that this bound for the maximum ordering which is not natural is asymptotically tight. This bound also implies that $\mathscr{O}(t) \leq \frac{6(t^2-1)}{\pi^2}$ and that, for any fixed $t$, a simple computer search will yield all the possible orderings arising from these biases. \vspace{0.1in}
    
    \noindent (ii) Using the techniques of Craig and the authors in \cite{craig,kregular} respectively, we could, in principle, make the error terms in \Cref{thm:indiv-asym} explicit, and thus for fixed $t$ find the precise $n$ where the ordering from \Cref{cor:indiv-diff} among congruence classes takes over. For brevity, we do not include this calculation.
\end{remark}

Numerics also suggest that the quotient $\frac{\mathscr{O}(t)}{\varphi(t)}$ grows sublinearly in $t$ and superlogarithmically in $t$ (see \Cref{fig:o(t)}). Thus we make the following conjecture.
\begin{conjecture}
\label{conj:superlinear}
    As $t \to \infty$, we have $\frac{\mathscr{O}(t)}{\varphi(t)} = o(t)$ and $\log t = o\left(\frac{\mathscr{O}(t)}{\varphi(t)}\right)$.
\end{conjecture}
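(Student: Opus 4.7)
For the upper bound $\mathscr{O}(t)/\varphi(t) = o(t)$, the plan is to start from \Cref{thm:k-indiv-biases}\ref{item:t2}, which restricts the relevant range of $k$ to $2 \leq k \leq 6(t^2-1)/\pi^2$ with $\gcd(k,t) = 1$ and yields the crude bound $\mathscr{O}(t) \leq 6(t^2-1)/\pi^2$. Since $\varphi(t) \geq ct/\log\log t$, this only delivers $\mathscr{O}(t)/\varphi(t) = O(t\log\log t)$, so a refinement is required. The refinement exploits the fact that $\overline{r}$ depends only on $a := k \bmod t$: for each $a \in (\mathbb{Z}/t\mathbb{Z})^\times$ there is a fixed permutation $\sigma_a$ of $\{1,\ldots,t\}$ with
\[
    \psi_{k,t}(r) = -\psi(r/t) + k^{-1}\psi(\sigma_a(r)/t),
\]
so as $k$ runs through a fixed arithmetic progression modulo $t$, the induced ordering changes only at the (at most $\binom{t}{2}$) critical values of $k^{-1}$ where two coordinates $\psi_{k,t}(r)$ and $\psi_{k,t}(s)$ coincide. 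Each such critical value is an explicit ratio of digamma differences. The goal is to show that, on average over $a$, only a bounded number of these ratios fall in the window $k \leq 6(t^2-1)/\pi^2$, which would contract the contribution of each residue class enough to force $\mathscr{O}(t)/\varphi(t) = o(t)$.

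For the lower bound $\log t = o(\mathscr{O}(t)/\varphi(t))$ the plan is constructive: exhibit $\gg \varphi(t)\log t$ distinct orderings. \Cref{thm:k-indiv-biases}\ref{item:r+k} already produces a ``shift'' phenomenon $r \succ_{k,t} r+k$ whose precise form depends on $k \bmod t$, accounting for the factor of $\varphi(t)$. To extract the additional logarithmic factor, I would fix a residue class $a$ and track the evolution of the ordering as $k$ increases through $a, a+t, a+2t, \ldots$; if one can exhibit $\gg \log t$ genuinely distinct critical values $k^{-1}$ in this progression (for instance by showing they are geometrically spaced and each effects a new transposition), then each residue class contributes $\gg \log t$ distinct orderings. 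The transcendence results of Gun--Murty--Rath \cite{murty} cited after \Cref{cor:indiv-diff} would be invoked to rule out accidental collisions between orderings arising from distinct residue classes or distinct sizes of $k$ within a class.

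The main obstacle, in both directions, is the fine arithmetic of the digamma values $\{\psi(r/t) : 1 \leq r \leq t\}$: the entire argument reduces to controlling ratios of the form
\[
    \frac{\psi(s/t) - \psi(r/t)}{\psi(\sigma_a(s)/t) - \psi(\sigma_a(r)/t)},
\]
but the upper bound demands that most of these ratios cluster outside the window $k \leq 6(t^2-1)/\pi^2$ while the lower bound demands that sufficiently many fall inside and remain distinct. The only currently available tool for the requisite independence statements is Gun--Murty--Rath, which gives qualitative nonvanishing but not quantitative separation. A complete proof almost certainly requires either a sharper transcendence-type input or a combinatorial handle on the interaction between the inversion permutation $\sigma_a$ and the monotonicity of $\psi$ on $(0,1)$. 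I would regard the upper bound as the more tractable half, since it reduces to uniform estimates on finitely many explicit ratios, while the lower bound seems to demand a genuinely new construction.
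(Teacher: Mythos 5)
This statement is \Cref{conj:superlinear}, which the paper leaves open: there is no proof in the paper at all, only numerical evidence (\Cref{fig:o(t)}) and a short heuristic remark at the end of \Cref{subs:linear} for the lower bound. So there is nothing in the paper to compare your argument against line by line; what you have produced is, appropriately, a research program rather than a proof, and you correctly flag the obstacles yourself.

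Your lower-bound strategy is essentially the same as the paper's heuristic remark: both track how the ordering induced by $\psi_{k,t}$ evolves as $k$ moves through a fixed residue class $a \bmod t$, observing that the permutation $\sigma_a$ (equivalently $\overline{r}$) is frozen and only the scalar $1/k$ changes, so each pair $(r,s)$ has at most one ``switching point'' $S_{r,s}$ at which the sign of $\psi_{k,t}(r)-\psi_{k,t}(s)$ flips. The paper's remark stops at the observation that one would need $\gg \log t$ well-separated switching points in the admissible window $k \leq 6(t^2-1)/\pi^2$; you add the Gun--Murty--Rath independence input as a tool to rule out collisions, which is a sensible addition and not present in the paper. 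Your upper-bound strategy (counting critical values of $k^{-1}$ per residue class and arguing that most fall outside the window) has no analogue in the paper at all; the paper's only upper bound is the crude $\mathscr{O}(t) \leq 6(t^2-1)/\pi^2$ from item \ref{item:t2}, which as you note only gives $O(t\log\log t)$.

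One issue worth flagging in both your sketch and the paper's heuristic: showing that each residue class contributes $\gg \log t$ distinct orderings would give $\mathscr{O}(t)/\varphi(t) \gg \log t$, i.e.\ $\log t = O(\mathscr{O}(t)/\varphi(t))$, but the conjecture asserts the strictly stronger $\log t = o(\mathscr{O}(t)/\varphi(t))$. To close this gap you would need the count of useful switching points per residue class to grow strictly faster than $\log t$, which is a further quantitative demand on the separation of the ratios of digamma differences — precisely the estimate that, as you observe, Gun--Murty--Rath does not supply. So even the program's target, as stated in both your writeup and the paper's remark, falls short of the conjectured little-$o$.
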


\begin{figure}[ht]
    \centering
    \includegraphics[scale=0.5]{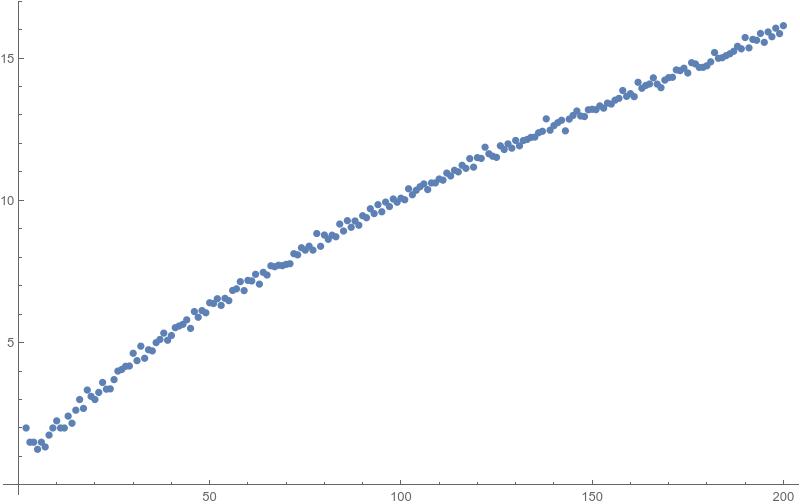}
    \caption{Graph of $\frac{\mathscr{O}(t)}{\varphi(t)}$ as $t$ ranges from $2$ to $200$}
    \label{fig:o(t)}
\end{figure}

\begin{remark}
    Although $\overline{r}$ is hard to predict for general $k$, it is fixed for $k$ in the same congruence classes modulo $t$. See the end of \Cref{subs:linear} for a discussion of this idea in relation to $\log t = o\left(\frac{\mathscr{O}(t)}{\varphi(t)}\right)$.
\end{remark}

We now describe the methods used to prove \Cref{thm:indiv-asym}. Following the methods established in \cite{beckwith2017,craig,kregular}, we make use of a distinct variation on Hardy and Ramanujan's circle method originally due to Wright (see for example \cite{bringmannCircle,ngo,wright}). Notably, the relevant generating function for $D_k^\times(r,t;n)$ is not modular, and so our application of the circle method differs significantly from traditional applications. However, as in \cite{beckwith2017,craig,kregular}, the generating function may be broken up into two components, the first of which is modular, and so may be estimated using traditional methods near one. The second component, which we refer to as the summatory component, is expressible as a sum of polylogarithms, and so Euler-Maclaurin summation may be used to compute asymptotic expansions. These estimates are then combined to produce the asymptotic expansion for $D_k^\times(r,t;n)$ via Wright's circle method. This portion of the paper broadly follows the techniques established by Beckwith and Mertens in \cite{beckwith2017} and used by Craig as well as the authors in \cite{craig,kregular}. As in \cite{beckwith2017}, the asymptotic for $D_k^\times(r,t;n)$ contains the digamma function $\psi(x) = \frac{\Gamma'(x)}{\Gamma(x)}$. For a discussion of why this function appears for $D_k^\times(r,t;n)$ but not for $D_k(r,t;n)$, see the remark following \Cref{lemma:etimes-series}.


The paper is organized as follows. In \Cref{sec:prelim}, we recall known results which we will use throughout the paper. These include the properties of the digamma function, two distinct variants on Wright's circle method (see \cite{bringmannCircle,ngo}), and an asymptotic version of Euler-Maclaurin summation due to Bringmann, Craig, Ono, and Males (see \cite{bringmannCircle}). In \Cref{sec:estimates}, we derive a convenient form of the generating function for $D_k^\times(r,t;n)$ and use Euler-Maclaurin summation as well as modularity to obtain the necessary bounds on the major and minor arcs for use in the circle method. \Cref{sec:indiv-asym} then applies the circle method with these asymptotics to prove \Cref{thm:indiv-asym}. Lastly, \Cref{sec:biases} uses properties of the digamma function to prove \Cref{thm:k-indiv-biases}.

\section*{Acknowledgements}

The authors were participants in the 2022 UVA REU in Number Theory. They would like to thank Ken Ono, the director of the UVA REU in Number Theory, as well as their graduate student mentor William Craig. They would also like to thank their colleagues at the UVA REU for their encouragement and support. They are grateful for the support of grants from the National Science Foundation (DMS-2002265, DMS-2055118, DMS-2147273), the National Security Agency (H98230-22-1-0020), and the Templeton World Charity Foundation.



\section{Preliminaries}\label{sec:prelim}

\subsection{Bernoulli Polynomials and Special functions}\label{subs:special function}
We recall the definition of the \textit{Bernoulli polynomials}. In \cite[(24.2.3)]{nist}, $B_n(x)$ is given by the Taylor expansion
\begin{align}
\sum_{n=0}^\infty B_n(x)\frac{t^n}{n!} \coloneqq \frac{te^{xt}}{e^t-1}.\label{eq:bernoulli}
\end{align}
The \textit{Bernoulli numbers} are the values of these polynomials at 0; i.e., $B_n\coloneqq B_n(0)$. We note that $B_{2n+1} = 0$ for $n>0$.

We also recall the definition of \textit{polylogarithms}: for $s \neq 1$ and $\abs q < 1$, define
\[
\Li_s(q) \coloneqq \sum_{n=1}^\infty \frac{q^n}{n^s},
\]
and for other $q$ define $\Li_s(q)$ by analytic continuation. Thus, its derivative satisfies the following property; if we set $q = e^{-z}$, then we have,
\begin{align}
    \frac{\partial \Li_s(q)}{\partial z} = -\Li_{s-1}(q).\label{eq:polylog-der}
\end{align}
We also need the expansion of $\Li_s$ at one. For $q = e^{-z}$ and $\abs z < 2\pi$, we have
\begin{align}
    \Li_s(q) = \Gamma(1-s)z^{s-1} + \sum_{n = 0}^\infty \zeta(s - n) \frac{(-z)^n}{n!} \label{eq:polylog-series}.
\end{align}

Now recall the \textit{digamma} function $\psi$, defined as the logarithmic derivative of $\Gamma$,
\[
\psi(x) \coloneqq \frac{\Gamma'(x)}{\Gamma(x)},
\]
where $\Gamma$ is defined as
\[
\Gamma(x) \coloneqq \int_0^\infty e^{-u}u^{x-1} \d u.
\]
We note the integral representation of $\psi(x)$, given in \cite[(5.9.12)]{nist} as
\begin{equation}
    \psi(x) = \int_0^\infty \frac{e^{-u}}{u} - \frac{e^{-xu}}{1 - e^{-u}}\d u \label{eq:integral-psi}
\end{equation}
for $\Re(x) > 0$. We also require the special values given below from \cite[(5.4.12)]{nist}, where $\gamma$ is the Euler-Mascheroni constant:
\begin{align}
    \psi(1) = -\gamma && \psi'(1) = \frac{\pi^2}{6} \label{eq:psi-special-value}.
\end{align}
We also require the recurrence relation given in \cite[(5.5.2)]{nist} as
\begin{equation}
    \psi(x + 1) = \psi(x) + \frac{1}{x} \label{eq:psi-recurrence}
\end{equation}
and the inequalities (see \cite{alzer})
\begin{equation}
    \log x - \frac{1}{x} \leq \psi(x) \leq \log x - \frac{1}{2x}, \label{eq:log-digamma}
\end{equation}
which hold for $x > 0$. These last two properties of the digamma function play a key role in establishing \Cref{thm:k-indiv-biases}, and thus in understanding $\prec_{k,t}$. Finally, note that $\psi$ is increasing on the interval $(0,\infty)$.

\subsection{Classical Results on the Partition Generating Function}\label{subs:classic-partitions}

Here, we note the properties of the \textit{partition generating function} given by
\[
    \P(q) \coloneqq \sum_{n \geq 0} p(n)q^n = \prod_{n \geq 1} \frac{1}{1 - q^n},
\]
which absolutely converges for $\abs{q} < 1$. For notational convenience, recall that the \textit{$q$-Pochhammer symbol} defined by
\[
    (a;q)_\infty \coloneqq \prod_{n \geq 1} \left(1 - aq^{n-1}\right).
\]
In particular, we may write $\P(q) = (q,q)_\infty^{-1}$. We now recall a bound and an asymptotic for $\P(q)$ when $0 < q < 1$, which can be found in Shakarchi and Stein's book on complex analysis as an exercise (see \cite[Ch.~10.4.1]{stein}). Namely we have for $0 < q < 1$ that
\begin{align}
    \log \P(q) &\leq \frac{\pi^2q}{6(1-q)} \label{eq:logp-absolute}, \\
    \log \P(q) &\sim \frac{\pi^2}{6(1-q)} \label{eq:logp-asym}.
\end{align}
The next exercise in \cite{stein} then uses \eqref{eq:logp-absolute} and \eqref{eq:logp-asym} to show the following subexponential bound on $p(n)$:
\begin{align}
    p(n) \leq e^{(\pi^2/6 + 1)\sqrt{n}} \label{eq:subexp-p}.
\end{align}
The bound \eqref{eq:subexp-p} reflects the shape of the asymptotic given by Hardy and Ramanujan given in \eqref{eq:hi}. We also require Euler's pentagonal number theorem, which will later allow us to bound $\P(q)$ when it appears in a denominator. This classical theorem takes the form
\begin{align}
    \P(q)^{-1} = \prod_{n \geq 1} (1 - q^n) = 1 + \sum_{m \geq 1} (-1)^m\left(q^{m(3m+1)/2} + q^{m(3m-1)/2} \right) \label{eq:pentagonal}.
\end{align}
Lastly, we state for later reference the modular transformation law for $\P$, which can be found in \cite[Thm.~5.1]{apostol}, and which is crucial for estimating the modular component of the generating function for $D_k^\times(r,t;n)$:
\begin{align}
    \P(e^{-z}) = \sqrt{\frac z{2\pi}} \exp\left(\frac{\pi^2}{6z} - \frac z{24}\right) \P\left(e^{-\frac{4\pi^2}z}\right). \label{eq:modular-p}
\end{align}

\subsection{Euler-Maclaurin Summation}\label{subs:euler-maclaurin}

In this subsection, we recall the asymptotic version of the classical Euler-Maclaurin summation due to Zagier \cite{zagier}, and another version due to Brignmann, Craig, Ono, and Males in \cite{bringmannCircle} that is suitable for application to functions with poles. Euler-Maclaurin summation quantifies the difference between the integral $\int_a^b f(x) \d x$ and an approximation of this integral, namely, the finite sum $f(a + 1) + \cdots + f(b)$. In particular, we have 
\begin{align*}
    \sum_{m=1}^{b-a} f(a + m) - \int_a^b f(x) \d x = \sum_{m=1}^N \frac{B_m}{m!}\left(f^{(m-1)}(b) - f^{(m-1)}(a)\right) + (-1)^{N+1}\int_a^b f^{(N)}(x)\frac{\widehat{B}_N(x)}{N!}\d x,
\end{align*}
where $\widehat{B}_n(x) \coloneqq B_n(x - \floor{x})$, with $\floor{x}$ denoting the greatest integer less than or equal to $x$. Zagier proved in \cite{zagier} that, if there exists an asymptotic expansion for $f(z)$ near $z = 0$, the Euler-Maclaurin summation formula may be used to give an asymptotic for $\sum_{m = 1}^\infty f(mz)$ as $z \to 0$.
Here, we mean asymptotic expansion in the strong sense, where we write $f(z) \sim \sum_{n \geq 0} b_nz^n$ if
\begin{align*}
f(z) - \sum_{n=1}^N b_nz^n = O(z^{N+1}) \ \ \ \  \text{as } z \to 0,
\end{align*}
for all $N > 0$. Many authors have applied this asymptotic form of Euler-Maclaurin summation in recent years to understand the growth of generating functions without nice modular transformation laws (for examples, see \cite{beckwith2017,bringmannCircle,bringmannTauberian,bringmannRankUni,craig,craig2021seaweed,kregular}).

We use the following notations freely throughout the paper. We also set
\begin{align*}
    I_f \coloneqq \int_0^\infty f(x) \d x
\end{align*}
for any function $f$ for which this integral converges. In \cite{zagier}, Zagier requires that $f$ have \textit{rapid decay at infinity}, that is $f(x) = O(x^{-N})$ as $x \to \infty$ for any $N > 1$. The version derived in \cite{bringmannCircle} by Bringmann, Craig, Ono, and Males requires a less restrictive decay condition on $f(x)$ at infinity, which they refer to as \textit{sufficient decay}, which holds if $f(x) = O(x^{-N})$ as $x \to \infty$ for some $N > 1$. Furthermore, they allow $f$ to have poles at $0$. Before stating this version, we state the original version due to Zagier.
\begin{prop}[{\cite[Proposition~3]{zagier}}]\label{prop:zagier-asym}
    Let $f$ be a $C^\infty$ function on the positive real line which has asymptotic expansion $f(x) \sim \sum_{n=0}^\infty c_nx^n$ at the origin, and together with all its derivatives, is of rapid decay at infinity. Then the function $g(x) = \sum_{m=1}^\infty f(mx)$ has asymptotic expansion
    \[
        g(x) \sim \frac{I_f}{x} + \sum_{n=0}^\infty c_n\frac{B_{n+1}}{n+1}(-x)^n
    \]
    as $x \to 0^+$.
\end{prop}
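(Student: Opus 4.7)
The plan is to apply the Euler--Maclaurin summation formula displayed just above the proposition to the rescaled function $F_x(y) \coloneqq f(xy)$ on the interval $[0, M]$, then let $M \to \infty$ and read off the asymptotic expansion of $g(x) = \sum_{m \geq 1} F_x(m)$ in terms of the values of $f$ at $0$ and $\infty$. Concretely, taking $a = 0$, $b = M$, and applying the cited formula with $f$ replaced by $F_x$ produces
\[
    \sum_{m=1}^{M} f(mx) = \int_0^M f(xy)\d y + \sum_{m=1}^N \frac{B_m}{m!}\bigl(F_x^{(m-1)}(M) - F_x^{(m-1)}(0)\bigr) + (-1)^{N+1}\int_0^M F_x^{(N)}(y)\frac{\widehat{B}_N(y)}{N!}\d y.
\]

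Next I would take $M \to \infty$. Rapid decay of $f$ and all of its derivatives forces $F_x^{(m-1)}(M) = x^{m-1} f^{(m-1)}(xM) \to 0$ for each fixed $x > 0$; the integral $\int_0^M f(xy)\d y$ converges to $I_f/x$ after the substitution $u = xy$; and the remainder integral converges absolutely because $\widehat{B}_N$ is bounded and $f^{(N)}$ has rapid decay. The boundary contribution at $0$ can then be read off from the assumed expansion: because $f$ is $C^\infty$, the coefficients $c_n$ must agree with its Taylor coefficients $f^{(n)}(0)/n!$, so $F_x^{(m-1)}(0) = x^{m-1}(m-1)!\, c_{m-1}$. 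Reindexing by $n = m - 1$ produces precisely the main terms $\sum_{n=0}^{N-1} \frac{B_{n+1}}{n+1} c_n x^n$ appearing (up to the signs dictated by the Bernoulli convention in the cited Euler--Maclaurin formula) in the claimed series.

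The step I expect to be the main obstacle is controlling the remainder integral uniformly as $x \to 0^+$ so as to recover the asymptotic-series error bound $O(x^N)$. After substituting $u = xy$ in the remainder, one obtains
\[
    \int_0^\infty F_x^{(N)}(y)\frac{\widehat{B}_N(y)}{N!}\d y = \frac{x^{N-1}}{N!}\int_0^\infty f^{(N)}(u)\widehat{B}_N(u/x)\d u,
\]
which the crude bound $\|\widehat{B}_N\|_\infty \int_0^\infty |f^{(N)}(u)|\d u < \infty$ only shows to be $O(x^{N-1})$, one power of $x$ too weak. The clean fix is to run Euler--Maclaurin at order $N + 1$ rather than $N$: the extra boundary term $-\frac{B_{N+1}}{N+1} c_N x^N$ is already $O(x^N)$ and so can be absorbed into the error, while the upgraded remainder is now $O(x^N)$. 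Since $N$ was arbitrary, this establishes the full asymptotic expansion in the strong sense defined in the excerpt, using only rapid decay of all derivatives at infinity, the $C^\infty$ hypothesis at $0$, and standard properties of the Bernoulli polynomials.
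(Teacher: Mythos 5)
The paper does not prove this result; it cites Zagier and later paraphrases his argument as an Euler--Maclaurin computation, so there is no internal proof to compare against. Your strategy of applying the Euler--Maclaurin formula to $F_x(y) \coloneqq f(xy)$, letting $M \to \infty$, and reading off boundary contributions is the natural route and matches the outline the paper attributes to Zagier. However, the step where you identify the boundary term at $0$ has a genuine gap. You assert that ``because $f$ is $C^\infty$, the coefficients $c_n$ must agree with its Taylor coefficients $f^{(n)}(0)/n!$,'' but the hypothesis is that $f$ is $C^\infty$ on the \emph{positive} real line $(0,\infty)$, not on $[0,\infty)$, and a strong asymptotic expansion $f(x) \sim \sum c_n x^n$ as $x \to 0^+$ does not force the one-sided limits $f^{(m)}(0^+)$ to exist. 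For instance $f(x) = e^{-x}e^{-1/x}\sin\bigl(e^{1/x}\bigr)$ is smooth on $(0,\infty)$, has rapid decay with all derivatives at infinity, and has the all-zero asymptotic expansion at $0$, yet $f'$ oscillates without bound near $0$. So the quantities $F_x^{(m-1)}(0)$ you substitute into Euler--Maclaurin need not be defined. A correct argument must treat the endpoint more delicately --- for example by subtracting a smooth model carrying the first $K$ terms of the expansion so that the remainder vanishes to order $K$ at $0$, or by routing through the Mellin transform, which sees $f$ only through integrals and is insensitive to pointwise derivative behavior at the endpoint.

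Separately, the sign bookkeeping you wave away does matter. Feeding your boundary term into the paper's displayed Euler--Maclaurin formula gives $-\sum_{n\geq 0}\frac{B_{n+1}}{n+1}c_n x^n$, while the proposition asserts $\sum_{n\geq 0} c_n\frac{B_{n+1}}{n+1}(-x)^n$. These agree termwise for all $n\geq 1$ (since $B_{2j+1}=0$ for $j\geq 1$) but differ by $c_0$ at $n=0$, and the discrepancy is real: the paper's displayed formula sums $f(a+1),\ldots,f(b)$, which is the normalization compatible with $B_1 = +\tfrac12$, whereas the paper's convention $B_n \coloneqq B_n(0)$ gives $B_1 = -\tfrac12$. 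With a consistent pair of conventions (say summing over $m = a,\ldots,b-1$ with $B_1=-\tfrac12$) the reindexing introduces an extra $-f(0) = -c_0$ that exactly repairs the constant term. This should be tracked explicitly rather than absorbed into ``up to signs''; as written, the $n=0$ coefficient of your expansion comes out wrong by a sign.
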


Note that this proposition only applies to functions on the real line. In contrast, the version in \cite{bringmannCircle} may be used for functions on the complex plane. However, we must take $z \to 0$ within some specified region in order for the asymptotic to hold. To make this precise, for $\theta > 0$, write
\[D_\theta \coloneqq \{z \in \mathbb{C} : \abs{\operatorname{arg} z} < \frac{\pi}{2} - \theta\}.\] 
Notice that if we set $z = \eeta + iy$ for $\eeta > 0$, then $z \in D_\theta$ if and only if $0 < \abs{y} < \Delta \eeta$ for some constant $\Delta > 0$ depending on $\theta$.

We now provide the asymptotic version of Euler-Maclaurin summation due to Bringmann, Craig, Males, and Ono which may be used when $f$ has a pole at zero.

\begin{prop}[{\cite[Lemma~2.2]{bringmannCircle}}]\label{prop:euler-mac-asym-pole}
    Let $0 < a \leq 1$ and $A \in \R^+$, and let $f(z) \sim \sum_{n = n_0}^\infty c_nz^n$ where $n_0 \in \ZZ$ as $z \to 0$ in $D_\theta$. Furthermore, assume that $f$ and all of its derivatives are of sufficient decay in $D_\theta$. Then we have that
    \begin{align*}
        \sum_{n = 0}^\infty f((n+a)z) \sim \sum_{n = n_0}^{-2} c_n\zeta(-n,a)z^n + \frac{I^\ast_{f,A}}{z} - \frac{c_{-1}}{z}\left(\log(Az) + \psi(a) + \gamma\right) - \sum_{n = 0}^\infty c_n\frac{B_{n+1}(a)}{n+1}z^n
    \end{align*}
    as $z \to 0$ uniformly in $D_\theta$, where
    \begin{align*}
        I_{f,A}^\ast \coloneqq \int_0^\infty \left(f(u) - \sum_{n = n_0}^{-2} c_nu^n - \frac{c_{-1}e^{-Au}}{u}\right) \d u,
    \end{align*}
    $\psi$ is the digamma function, $\gamma$ is the Euler-Mascheroni constant, and $\zeta(s,a) \coloneqq \sum_{n \geq 0} \frac{1}{(n+a)^s}$ is the Hurwitz zeta function.
\end{prop}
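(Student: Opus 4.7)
The plan is to decompose $f$ into pieces that can each be summed separately. Write
\[
f(u) = \sum_{m=n_0}^{-2} c_m u^m + \frac{c_{-1} e^{-Au}}{u} + g(u),
\]
where $g$ is by construction $C^\infty$ on $[0,\infty)$, admits an asymptotic expansion $g(u) \sim \sum_{n \geq 0} d_n u^n$ at the origin with $d_n = c_n - c_{-1}(-A)^{n+1}/(n+1)!$, and still has sufficient decay at infinity (the correction $e^{-Au}/u$ decays faster than any power in $D_\theta$). Applying this decomposition to $\sum_{n \geq 0} f((n+a)z)$ splits the sum into three series. The singular monomial series is immediate: $\sum_{n \geq 0}((n+a)z)^m = \zeta(-m,a)z^m$ converges absolutely for each $m \leq -2$, producing the first term on the right-hand side of the claim.

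For the exponential piece, set $F(w) \coloneqq \sum_{n \geq 0} e^{-(n+a)w}/(n+a)$; termwise differentiation gives $F'(w) = -e^{-aw}/(1-e^{-w})$. Using the Bernoulli generating function \eqref{eq:bernoulli} together with the reflection $B_n(1-a) = (-1)^n B_n(a)$ shows
\[
F'(w) = -\frac{1}{w} - \sum_{n \geq 0} \frac{(-1)^{n+1} B_{n+1}(a)}{(n+1)!}\, w^n.
\]
Integrating termwise and fixing the constant of integration by comparing with the integral representation \eqref{eq:integral-psi} of $\psi(a)$ together with the standard expansion $\int_w^\infty e^{-t}/t\, dt = -\gamma - \log w + O(w)$ yields
\[
F(w) = -\log w - \psi(a) - \gamma + \sum_{n \geq 1} \frac{(-1)^{n+1} B_n(a)}{n \cdot n!}\, w^n.
\]
Substituting $w = Az$ and multiplying by $c_{-1}/z$ produces precisely the advertised log term $-(c_{-1}/z)(\log(Az) + \psi(a) + \gamma)$ plus an explicit series in $z$ with $A$-dependent coefficients.

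For the regular remainder $g$, apply the Hurwitz-shifted version of \Cref{prop:zagier-asym}: by extending Zagier's argument from $\sum_{m \geq 1} g(mz)$ to $\sum_{m \geq 0} g((m+a)z)$ and using $\zeta(-n,a) = -B_{n+1}(a)/(n+1)$, one obtains
\[
\sum_{n \geq 0} g((n+a)z) \sim \frac{I_g}{z} - \sum_{n \geq 0} d_n \frac{B_{n+1}(a)}{n+1}\, z^n,
\]
with $I_g = I^{\ast}_{f,A}$ by construction. Substituting the formula for $d_n$ and combining with the $A$-dependent series extracted from the exponential piece, a short index shift shows that the two $A$-dependent Bernoulli contributions cancel exactly, leaving only $-\sum_{n \geq 0} c_n B_{n+1}(a) z^n/(n+1)$. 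This cancellation is both the cleanest confirmation of the result and the reason the final asymptotic is legitimately independent of the auxiliary parameter $A$.

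The main obstacle is promoting each of these expansions from the positive real axis to the full sector $D_\theta$. For $z \in D_\theta$ with small $|z|$ the real part $\Re(z)$ and $|z|$ are comparable, so the hypothesized sufficient decay of $f$ in $D_\theta$ guarantees absolute convergence of the tail of $\sum_n f((n+a)z)$ uniformly in $\operatorname{arg} z$. Bounding the remainder in the classical finite Euler--Maclaurin formula applied to $g$ then demands uniform derivative bounds on $g$ throughout $D_\theta$, which follow from the assumption that $f$ and all of its derivatives are of sufficient decay in the sector. The expansion of $F(Az)$ causes no trouble, since it is a Taylor series of a function holomorphic on $|w| < 2\pi$, hence valid for complex $w = Az$ in $D_\theta$ as $z \to 0$.
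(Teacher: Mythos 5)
The paper does not prove this proposition; it is stated as a direct citation of Lemma~2.2 of Bringmann--Craig--Males--Ono (\cite{bringmannCircle}), so there is no in-paper proof to compare against. Evaluated on its own merits, your reconstruction is sound and in fact mirrors the strategy of the cited source: the three-way split into the honest pole part $\sum_{m\le -2} c_m u^m$, the regularized simple-pole piece $c_{-1}e^{-Au}/u$, and the smooth remainder $g$ is exactly the right decomposition, and you correctly verify that the resulting $d_n = c_n - c_{-1}(-A)^{n+1}/(n+1)!$ cancel the pole. Your treatment of $F(w) = \sum_{n\ge 0} e^{-(n+a)w}/(n+a)$ is correct, including the fixing of the integration constant as $-\psi(a)-\gamma$ via the representation \eqref{eq:integral-psi} together with the small-$w$ expansion of $E_1(w)$, and you verify the crucial cancellation of the $A$-dependent Bernoulli series against the $A$-dependence hidden in $d_n$, which is what makes the final answer legitimately independent of $A$.

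The one genuine gap: you invoke a ``Hurwitz-shifted version of \Cref{prop:zagier-asym}'' giving $\sum_{n\ge 0} g((n+a)z) \sim I_g/z - \sum_n d_n B_{n+1}(a)z^n/(n+1)$ uniformly in $D_\theta$, but this is not a black box you can appeal to --- it is precisely the technical core of the lemma. Zagier's Proposition~3 is stated for $\sum_{m\ge 1} f(mz)$ with $z$ a positive real; you need both the $a$-offset (which replaces $B_n$ with $B_n(a)$) and the uniformity over the complex sector $D_\theta$, and you supply neither. Your last paragraph gestures at the sector extension by asserting comparability of $\Re(z)$ and $|z|$ and ``uniform derivative bounds,'' but this does not constitute an argument: one must carry out the Euler--Maclaurin remainder estimate with the $a$-shift and show that the error term is $O_\theta(|z|^N)$ uniformly in $\arg z$, not just on the positive real ray. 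If you supply that lemma (it is essentially Lemma~2.1 of \cite{bringmannCircle}), the rest of your proof closes.
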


\subsection{Variants on Wright's Circle Method}\label{subs:wright-circle}

In this subsection, we recall variants on Wright's circle method which will be instrumental to our proof of \Cref{thm:indiv-asym}.  To begin, we recall the intuition which informs the method. Wright's circle method (see \cite{wright}), although inspired by Hardy-Ramanujan's circle method, differs from it by positing only that the generating function $F(q)$ is largest near a finite number of singularities, and that the contribution of the other singularities can be treated as an error terms in the integral. Thus, asymptotics for the coefficients of the $q$-series for $F(q)$ may be extracted using Cauchy's integral formula.

More precisely, given a circle $\mathcal{C}$ centered at the origin with radius less than one, we use the term \textit{major arc} for the region of $\mathcal{C}$ where $F(q)$ is largest, which should include its principal singularities. In particular, consider a circle of radius $e^{-\eeta}$ and let $\mathcal{L} \coloneqq \{\eeta + iy \mid \abs{y} \leq \pi\}$ so that $\mathcal{C} \coloneqq \{e^{-z} \mid z \in \mathcal{L}\}$. As in \cite{beckwith2017,craig,kregular}, the relevant generating function to our application of the circle method has a dominating singularity at $q = 1$. Thus, the major arc $\mathcal{C}_1$ is given by $\mathcal{C}_1 = \{e^{-z} \mid z \in \mathcal{L} \cap D_\theta\}$ for some $\theta>0$. The \textit{minor arc} of $\mathcal{C}$ is then defined by $\mathcal{C}_2 \coloneqq \mathcal{C} \setminus \mathcal{C}_1$. As mentioned above, the integral along the arc $\mathcal{C}_1$ is the main term in the asymptotic, and the integral along $\mathcal{C}_2$ is an error term.

Here, we recall the version of Wright's circle method due to Bringmann, Craig, Males, and Ono (see \cite{bringmannCircle}) which we will use in the proof of \Cref{thm:indiv-asym}.
\begin{theorem}[{\cite[Proposition~4.4]{bringmannCircle}}]\label{thm:simple-circle}
    Suppose that $F(q)$ is analytic for $q = e^{-z}$ where $z = x + iy$ satisfies $x > 0$ and $\abs{y} < \pi$, and suppose that $F(q)$ has an expansion $F(q) = \sum_{n=0}^\infty c(n)q^n$ near $q = 1$. Let $N,\Delta > 0$ be fixed constants. Consider the following hypotheses:
    \begin{enumerate}[label={(\arabic*)},leftmargin=*]
        \item\label{item:major-arc} As $z \to 0$ in the bounded cone $\abs{y} \leq \Delta x$ (major arc), we have
        \[
        F(e^{-z}) = Cz^Be^{\frac{A}{z}}\left(\sum_{j = 0}^{N-1} \alpha_jz^j + O_\theta(\abs{z}^N)\right),
        \]
        where $\alpha_s \in \CC, A,C \in \R^+,$ and $B \in \R$.
        \item\label{item:minor-arc} As $z \to 0$ in the bounded cone $\Delta x \leq \abs{y} < \pi$ (minor arc), we have
        \[
            \abs{F(e^{-z})} \ll_{\theta} e^{\frac{1}{\Re(z)}(A - \rho)}
        \]
        for some $\rho \in \R^+$.
    \end{enumerate}
    If \ref{item:major-arc} and \ref{item:minor-arc} hold, then as $n \to \infty$ we have
    \[
    c(n) = Cn^{\frac{1}{4}(-2B - 3)}e^{2\sqrt{An}}\left(\sum_{r=0}^{N-1} p_rn^{-\frac{r}{2}} + O\left(n^{-\frac{N}{2}}\right)\right),
    \]
    where $p_r \coloneqq \sum\limits_{j = 0}^r \alpha_j c_{j,r-j}$ and $c_{j,r} \coloneqq \frac{\left(-\frac{1}{4\sqrt{A}}\right)\sqrt{A}^{j + B + \frac{1}{2}}}{2\sqrt{\pi}} \cdot \frac{\Gamma(j + B + \frac{3}{2} + r)}{r!\Gamma(j + B + \frac{3}{2} - r)}$.
\end{theorem}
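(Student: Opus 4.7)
The plan is to recover the coefficients $c(n)$ via Cauchy's integral formula and then evaluate the resulting contour integral by the saddle point method. Writing $q = e^{-z}$ with $z = \eta + iy$, $|y| \leq \pi$, Cauchy's formula gives
\[
c(n) = \frac{1}{2\pi} \int_{-\pi}^{\pi} F(e^{-(\eta + iy)}) e^{n(\eta + iy)} \, dy.
\]
The natural choice of radius is the saddle point of $e^{A/z + nz}$, namely $\eta = \sqrt{A/n}$, at which the exponent takes the value $2\sqrt{An}$. I would then split the contour at $|y| = \Delta \eta$ into the major arc $\mathcal{C}_1$ and the minor arc $\mathcal{C}_2$, matching the decomposition described in \Cref{subs:wright-circle}.

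The minor arc is dispatched using hypothesis \ref{item:minor-arc}: the bound $|F(e^{-z})| \ll e^{(A-\rho)/\eta}$ together with the length of the arc gives
\[
\left| \int_{\mathcal{C}_2} F(e^{-z}) e^{nz} \, dy \right| \ll e^{n\eta + (A-\rho)/\eta} = e^{2\sqrt{An} - \rho\sqrt{n/A}},
\]
which beats the expected main term $\asymp e^{2\sqrt{An}}$ by an exponential factor and so is absorbed into the error $O(n^{-N/2})$.

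The heart of the proof is the major arc. Substituting the expansion from hypothesis \ref{item:major-arc} produces, for each $0 \leq j \leq N-1$, an integral of the shape
\[
C \alpha_j \int_{\mathcal{C}_1} z^{j+B} e^{A/z + nz} \, dy
\]
plus a remainder controlled by $|z|^N$. To evaluate each such integral, I would deform $\mathcal{C}_1$ out to a Hankel-type contour wrapping the negative real axis; the extra segments needed to close the contour decay thanks to $F$ being analytic in the sector $D_\theta$ together with the minor-arc bound. After the change of variables $z = \sqrt{A/n} \cdot u$, the integrals become Hankel representations of modified Bessel functions $I_\nu$, whose known large-argument expansions produce a series in $n^{-1/2}$ whose coefficients, after tracking the gamma-function prefactors, collapse to the stated $p_r = \sum_{j=0}^{r} \alpha_j c_{j,r-j}$ with $c_{j,r}$ as in the theorem.

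The main obstacle will be the uniform error control, since both the expansion in hypothesis \ref{item:major-arc} and the deformation from the truncated arc to the Hankel contour must be justified uniformly for $z \in \mathcal{C}_1$. Concretely, the trickiest point is bounding the remainder after substituting the $O(|z|^N)$ error from hypothesis \ref{item:major-arc} into the arc integral and showing that its contribution is precisely of size $e^{2\sqrt{An}} n^{\frac{1}{4}(-2B-3) - N/2}$. I expect this can be done by a Laplace-type estimate around the saddle: writing $z = \eta(1 + is)$ and Taylor-expanding the exponent to second order yields a Gaussian with width $\sim 1/\sqrt{An}^{1/2}$, and the higher-order remainder is handled via uniform polynomial bounds on the prefactor $z^{j+B}$ inside $D_\theta$.
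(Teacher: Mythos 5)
This theorem is not proved in the paper: it is imported verbatim from Bringmann--Craig--Males--Ono (cited as \cite{bringmannCircle}, Proposition~4.4) as a tool in \Cref{subs:wright-circle}, so there is no internal proof against which to compare your attempt. That said, your sketch follows the standard route by which results of this kind (Wright's circle method) are established, and it is broadly consistent with the argument in the cited source: Cauchy's formula on a circle of radius $e^{-\eta}$ with $\eta = \sqrt{A/n}$, an exponential-savings bound on the minor arc using hypothesis~\ref{item:minor-arc}, and the reduction of the major-arc integrals $\int z^{j+B} e^{A/z + nz}\,dz$ to modified Bessel functions $I_{-j-B-1}(2\sqrt{An})$ whose large-argument expansions yield the $c_{j,r}$. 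Two places in your sketch deserve tightening if you intend to make this a complete argument. First, you cannot deform the contour for $F$ itself (you only control $F$ on $|y| < \pi$ with the minor-arc bound); rather, after substituting the expansion from hypothesis~\ref{item:major-arc}, you deform each \emph{individual} power-times-exponential integrand $z^{j+B} e^{A/z + nz}$, which is analytic on the slit plane, and separately show that the difference between the truncated arc integral and the full Bessel contour integral is exponentially small. Second, in the error term it is cleaner to bound $\int_{\mathcal{C}_1} |z|^{N+B}\,|e^{A/z + nz}|\,|dz|$ directly by $|z| \asymp \eta = \sqrt{A/n}$ on the major arc and the observation that $\Re(A/z + nz) \leq 2\sqrt{An}$ there, which gives the $n^{-N/2}$ savings immediately without needing a full Laplace expansion of the remainder.
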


\begin{remark}
    Although the constant $C$ in \Cref{thm:simple-circle} does not appear in \cite{bringmannCircle}, we see that it is equivalent to the result in \cite{bringmannCircle} by factoring out $C$ from each $\alpha_i$.
\end{remark}

We also require Ngo and Rhoades' implementation of the circle method to functions of \textit{logarithmic type} (defined in \cite{ngo} and restated in the proposition below). Before stating this variant, we define for brevity
\[
    (v,m) \coloneqq \frac{\Gamma\left(v + m + \frac{1}{2}\right)}{m!\Gamma\left(v - m + \frac{1}{2}\right)}.
\]
For functions $L^\ast,\xi^\ast$ which are holomorphic in the unit disk, we also define the \textit{$q$-series coefficients of $L^\ast(q)\xi^\ast(q)$} by
\[
    V(n) = \frac{1}{2\pi i} \int_{\mathcal{C}} \frac{L^\ast(q)\xi^\ast(q)}{q^{n+1}} \d q.
\]
We then have the following from \cite{ngo}.
\begin{theorem}[{\cite[Proposition~1.9]{ngo}}]\label{thm:ngo-circle}
    Let $L^\ast,\xi^\ast$ be holomorphic functions within the unit disk satisfying the following hypotheses for $q = e^{-z}$, $z = \eeta + iy$ with $\eeta > 0, 0 \leq \abs{y} \leq \pi$:
    \begin{enumerate}[label={(\arabic*)},leftmargin=*]
        \item\label{item:hypo-1} For every positive integer $N$, as $\abs{z} \to 0$ in $D_\theta$, we have that
            \begin{align*}
            L^\ast(q) = \frac{\log z}{z^B}\left(\sum_{m = 0}^{N - 1} \alpha_m z^m + O_\theta(z^N)\right),
            \end{align*}
        where $\alpha_m \in \CC$ and $B$ is a real constant (in this case we say $L(q)$ has logarithmic type near $1$).
        \item\label{item:hypo-2} As $z \to 0$ in $D_\theta$, we have that
            \[
                \xi^\ast(q) = z^{B-1} e^{c^2/t}\left(1 + O\left(e^{-\gamma/z}\right)\right),
            \]
            where $\gamma$ is a positive real number.
        \item\label{item:hypo-3} As $z \to 0$ within the region $D_\theta' \coloneqq \{z \in \CC \mid \Re z > 0, z \notin D_\theta\}$,
        \[
            \abs{L^\ast(q)} \ll_{\theta} \eeta^{-C},
        \]
        where $C = C(\theta)$ is a positive real constant.
        \item\label{item:hypo-4} As $z \to 0$ within the region $D_\theta'$
        \[
            \abs{\xi^\ast(q)} \ll_\theta \xi(\abs{q})e^{-\epsilon/\eeta},
        \]
        where $\epsilon = \epsilon(\theta)$ is a positive real constant.
    \end{enumerate}
    Then we have an asympototic formula for the $q$-coefficients of $L^\ast(q)\xi^\ast(q)$
    \[
        V(n) = e^{2c\sqrt{n}}n^{-\frac{1}{4}}\left(\sum_{m=0}^{N-1} n^{-\frac{m}{2}}\left(c_r' + c_r\log n\right) + O\left(n^{-\frac{N}{2}} \log n\right)\right)
    \]
    where, with $\de_{m \geq 1} = 1$ when $m \geq 1$ and zero for $m < 1$, we set
    \begin{align*}
        a_m^\ast(s) &\coloneqq -\sum_{j = 0}^{\min(s-1,m)} \frac{\Gamma(s+1)(-2)^{2j-m}}{\Gamma(s-j)(j+1)(2\pi)^{\frac{1}{2}}}(s-1-j,m-j), \\
        \ell_{s,m} &\coloneqq \left(-\frac{1}{4c}\right)^m \frac{c^{s-\frac{1}{2}}}{-4\pi^{\frac{1}{2}}}(s,m), \\
        \ell_{s,m}' &\coloneqq \left(-\frac{1}{4c}\right)^m \frac{c^{s-\frac{1}{2}}\log c}{2\pi^{\frac{1}{2}}}(s,m) + \de_{m \geq 1} \frac{c^{s-m - \frac{1}{2}}}{2^{m + \frac{1}{2}}}a_{m-1}^\ast(s)
    \end{align*}
    and
    \begin{align*}
        c_m &\coloneqq  \sum_{s = 0}^m \alpha_s\ell_{s,m-s} && c_m' \coloneqq \sum_{s = 0}^m \alpha_s\ell_{s,m-s}'.
    \end{align*}
\end{theorem}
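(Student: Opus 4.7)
The result is Proposition~1.9 of Ngo--Rhoades \cite{ngo}, so the ``proof'' in this paper amounts to citing it. Nonetheless, here is the strategy I would use to derive it from scratch by Wright's circle method, adapted to accommodate the logarithmic factor in hypothesis \ref{item:hypo-1}. Starting from Cauchy's formula
\[
V(n) = \frac{1}{2\pi i} \int_{\eta - i\pi}^{\eta + i\pi} L^\ast(e^{-z})\xi^\ast(e^{-z}) e^{nz} \, dz,
\]
split the contour into the major arc $\{z \in D_\theta\}$ and the minor arc (its complement in the strip), and optimize $\eta = c/\sqrt{n}$ so that $c^2/\eta + n\eta = 2c\sqrt{n}$.

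On the minor arc, hypotheses \ref{item:hypo-3} and \ref{item:hypo-4} give the pointwise bound $|L^\ast \xi^\ast| \ll \eta^{-C}\xi(|q|) e^{-\epsilon/\eta}$. Multiplied by $|e^{nz}| = e^{n\eta}$, this produces a contribution of size $e^{2c\sqrt{n} - \epsilon\sqrt{n}/c}$, exponentially smaller than the main term and thus absorbed into the $O(n^{-N/2}\log n)$ error.

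On the major arc, I would substitute the expansions from \ref{item:hypo-1} and \ref{item:hypo-2} to obtain
\[
L^\ast(q)\xi^\ast(q) = \frac{\log z}{z}\, e^{c^2/z}\left(\sum_{m=0}^{N-1} \gamma_m z^m + O_\theta(z^N)\right),
\]
where each $\gamma_m$ is a convolution of the $\alpha_s$ against the Taylor coefficients of the non-exponential part of $\xi^\ast$. The problem then reduces to evaluating
\[
\mathcal{I}_\nu(n) \coloneqq \frac{1}{2\pi i}\int_{\eta - i\infty}^{\eta + i\infty} z^\nu \log(z)\, e^{c^2/z + nz}\, dz,
\]
after extending to $\pm i\infty$ at the cost of an error comparable to the minor arc bound. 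The Hankel representation gives the integral without the $\log z$ factor explicitly in terms of the modified Bessel function $I_{-\nu - 1}(2c\sqrt{n})$, and applying $\partial/\partial\nu$ recovers the $\log z$ factor. This splits into two species of terms: one from differentiating the prefactor $(c/\sqrt{n})^{\nu+1}$, which is responsible for the genuine $\log n$ contributions with coefficients $\ell'_{s,m}$, and one from differentiating the Bessel asymptotic series in its order, which produces the auxiliary factor $a^\ast_{m-1}(s)$. Inserting the large-argument asymptotic for $I_\alpha(2c\sqrt{n})$ and collecting powers of $n^{-m/2}$ yields the stated formula.

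The principal obstacle is pure bookkeeping: tracking these two species of logarithmic contributions and verifying that their reassembly matches the explicit expressions for $\ell_{s,m}$, $\ell'_{s,m}$, and $a^\ast_m(s)$ as stated, in particular the delicate Gamma-function identities hidden inside $(s,m)$. Every analytic ingredient --- saddle-point choice, minor arc decay, Bessel asymptotics --- is standard Wright-method fare, so in the body of the paper I would simply appeal to \cite[Proposition~1.9]{ngo}.
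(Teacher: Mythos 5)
You correctly observe that the paper offers no proof of this statement at all: it is reproduced verbatim (modulo minor typographical corrections discussed in the remark that follows it) and attributed to Proposition~1.9 of Ngo and Rhoades, which is exactly what your proposal also does in the end. Your sketch of how one would prove it — saddle point at $\eeta = c/\sqrt{n}$, exponential minor-arc decay from hypotheses \ref{item:hypo-3} and \ref{item:hypo-4}, a Hankel-type contour integral on the major arc recognized as a modified Bessel function, and differentiation in the order parameter to generate the $\log z$ (hence $\log n$) terms together with the auxiliary coefficients $a_m^\ast(s)$ — is a faithful outline of the actual argument in \cite{ngo}, so there is nothing to reconcile.
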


\begin{remark}
    In \cite{ngo}, hypothesis \ref{item:hypo-2} requires that $\gamma > c^2$. However, it is sufficient to have $\gamma > 0$ for the estimate arising from \ref{item:hypo-2} to be an error term. Similarly, in \cite{ngo}, the statements of hypotheses \ref{item:hypo-3} and \ref{item:hypo-4} are written as
    \begin{align*}
        L^\ast(q) &\ll_\theta z^{-C} \\
        \abs{\xi^\ast(q)} &\ll_\theta \xi^\ast(\abs{q})e^{-\epsilon/z}.
    \end{align*}
    However, these are simple typographical errors, as can be discovered by closely reading the proof.
\end{remark}
%
%

\section{Estimates on the Major/Minor Arcs}\label{sec:estimates}

\subsection{Generating Functions}\label{subs:indiv-gen}

In this subsection, we break the generating function $\D^\times_k(r,t;q)$ of $D^\times_k(r,t;q)$ into its modular and summatory components, and then we write the latter in a form to which we can apply Euler-Maclaurin summation. Let
\begin{align*}
    \D^\times_k(r,t;q) \coloneqq \sum_{n \geq 0} D_k^\times(r,t;q)q^n
\end{align*}
be the generating function of $D_k^\times(r,t;n)$. We then have the following expression for $\D_k^\times(r,t;q)$.

\begin{lemma}\label{lemma:dtimes-gen}
    We have that
    \begin{align*}
        \D_k^\times(r,t;q) = \frac{(q^k;q^k)_\infty}{(q;q)_\infty}\sum_{\substack{k \,\nmid m\, \\ m \equiv r \Mod t}} \frac{q^m}{1 - q^m},
    \end{align*}
    and if $\gcd(k,t) = 1$ then
    \begin{align*}
        \D_k^\times(r,t;q) = \frac{(q^k;q^k)_\infty}{(q;q)_\infty}\left(\sum_{\substack{m \equiv r \Mod t}} \frac{q^m}{1 - q^m} - \sum_{m \equiv \bar{r}_{k,t} \Mod t} \frac{q^{mk}}{1 - q^{mk}}\right),
    \end{align*}
    where $1 \leq \bar{r}_{k,t} \leq t$ is the representative of $rk^{-1} \Mod t$.
\end{lemma}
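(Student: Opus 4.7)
The proof is a direct generating-function calculation. The plan is to identify the appropriate two-variable generating function, specialize it to extract counts of occurrences of a given part, and then collect terms.

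First I would recall that the generating function for the family $\D_k^\times$ of $k$-indivisible partitions factors as
\[
\sum_{\lambda \in \D_k^\times} q^{|\lambda|} = \prod_{\substack{n \geq 1 \\ k \nmid n}} \frac{1}{1-q^n} = \frac{(q^k;q^k)_\infty}{(q;q)_\infty},
\]
by canceling factors corresponding to multiples of $k$. Next I would implement the standard ``part-counting'' trick: if we introduce the auxiliary variable $z_m$ marking occurrences of the part $m$, then the factor $\frac{1}{1-q^m}=\sum_{j\ge 0}q^{jm}$ is replaced by $\sum_{j\ge 0}z_m^j q^{jm}$; differentiating in $z_m$ at $z_m=1$ produces $\sum_{j\ge 0}jq^{jm}=\frac{q^m}{(1-q^m)^2}$. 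Thus the generating function that records, over all $k$-indivisible partitions of $n$, the total number of parts equal to a fixed $m$ with $k\nmid m$ is
\[
\frac{q^m}{(1-q^m)^2}\prod_{\substack{n\ge 1,\ k\nmid n \\ n\ne m}}\frac{1}{1-q^n}=\frac{q^m}{1-q^m}\cdot\frac{(q^k;q^k)_\infty}{(q;q)_\infty}.
\]

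Summing this identity over all $m$ with $k\nmid m$ and $m\equiv r\Mod t$ gives the first claimed formula, since $D_k^\times(r,t;n)$ is by definition \eqref{eq:dk-times-defn} the total contribution of such parts across all partitions of $n$. For the second formula, assume $\gcd(k,t)=1$ and split the inner sum using the complementary identity
\[
\sum_{\substack{k\nmid m \\ m\equiv r\Mod t}}\frac{q^m}{1-q^m}
=\sum_{m\equiv r\Mod t}\frac{q^m}{1-q^m}-\sum_{\substack{k\mid m \\ m\equiv r\Mod t}}\frac{q^m}{1-q^m}.
\]
In the second sum on the right, write $m=km'$; because $\gcd(k,t)=1$, the condition $km'\equiv r\Mod t$ is equivalent to $m'\equiv k^{-1}r\equiv \overline r_{k,t}\Mod t$. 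Reindexing turns that sum into $\sum_{m'\equiv \overline r_{k,t}\Mod t}\frac{q^{km'}}{1-q^{km'}}$, which yields the desired identity.

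There is no real obstacle here, only bookkeeping: the one place to be careful is the invertibility of $k$ modulo $t$ used in the reindexing step, which is exactly where the hypothesis $\gcd(k,t)=1$ enters (and explains why the second identity of the lemma requires coprimality while the first does not). A clean way to present the argument is to prove the first identity in full generality and then obtain the second as a one-line corollary via the bijection $m\leftrightarrow km'$ on congruence classes modulo $t$.
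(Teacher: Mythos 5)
Your proof is correct and matches the paper's argument in all essentials: both establish the first identity by the standard part-counting (weighted generating function) computation, showing that the contribution of a fixed allowed part $m$ is $\xi_k(q)\cdot\frac{q^m}{1-q^m}$, and both obtain the second identity from the first by the complementary-sum decomposition and the reindexing $m=km'$, which is precisely where coprimality of $k$ and $t$ is invoked. No gaps or meaningful differences in approach.
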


\begin{proof}
    First note that when $\gcd(k,t) = 1$ the second equality follows from the first by writing
    \begin{align*}
        \sum_{\substack{k \, \nmid m \, \\ m \equiv r \Mod t}} \frac{q^m}{1 - q^m} = \sum_{m \equiv r \Mod t} \frac{q^m}{1 - q^m} - \sum_{\substack{k \mid m \\ m \equiv r \Mod t}} \frac{q^m}{1 - q^m}
    \end{align*}
    and changing variables. It is a classical fact that $\xi_k(q) \coloneqq (q^{k+1},q^{k+1})_\infty(q,q)^{-1}_\infty$ is the generating function for $k$-indivisible partitions. Now for $k \nmid m$ define $D_{k,m}^\times(q)$ as the generating function for $k$-indivisible partitions where each partition is weighed by how many parts of size $m$ it contains. We see that
    \begin{align*}
        D_{k,m}^\times(q) &= \xi_k(q) \cdot \frac{\sum_{j \geq 1} jq^{jm}}{\sum_{j \geq 0} q^{jm}} = \xi_k(q) \cdot \frac{q^m(1-q^m)}{(1-q^m)^2} = \xi_k(q) \cdot \frac{q^m}{1-q^m}.
    \end{align*}
    Recalling \cref{eq:dk-times-defn}, the result then follows by summing over $m \equiv r \Mod t$ for $k \nmid m$.
\end{proof}

We break this generating function up into the following components:
\begin{align*}
    \xi_k(q) \coloneqq \frac{(q^k,q^k)_\infty}{(q,q)_\infty}, && L_k^\times(r,t;q) \coloneqq \sum_{\substack{k \, \nmid m \, \\ m \equiv r \Mod t}} \frac{q^m}{1 - q^m}.
\end{align*}
We call $\xi_k(q)$ the \textit{modular component} and $L_k(r,t;q)$ the \textit{summatory} component.\footnote{In Beckwith and Mertens work (see \cite{beckwith2017}), the summatory component $L$ is multiplied by $(2\pi)^{-1/2}q^{1/24}$ to easily apply Ngo-Rhodes' variant of the circle method. The notation in this paper matches that of Craig in \cite{craig} and the authors in \cite{kregular}. }

\begin{remark}
    Note that $\xi_k(q)$ exactly matches that found in the authors' previous work \cite{kregular} because the number of $k$-regular partitions and the number of $k$-indivisible partitions are in fact the same. Thus, we may apply many of the same estimates derived in \cite{kregular} here. As the estimates derived in \cite{kregular} are explicit and those needed in this paper are only asymptotic, we provide alternate proofs of these estimates for ease of reading.
\end{remark}
We thus first concern ourselves with $L^\times_k(r,t;q)$. For brevity, we will suppress the dependence of $\bar{r}_{k,t}$ on $k,t$ where these are clear. Now define $E_\times(z) \coloneqq \frac{e^{-z}}{1 - e^{-z}} = \Li_0(q)$. We now see that when $\gcd(k,t) = 1$, $L_k^\times(r,t;q)$ may be expressed as two sums over integers of $E_\times$ evaluated at specific values.

\begin{lemma}\label{lemma:lrt-times-e}
    When $\gcd(k,t) = 1$, we have
    \begin{align*}
        L_k^\times(r,t;q) = \sum_{\ell \geq 0} E_\times((\ell t + r)z) - \sum_{\ell \geq 0} E_\times((\ell t + \bar{r})zk).
    \end{align*}
\end{lemma}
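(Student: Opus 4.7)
The plan is to start from the second expression for $\D_k^\times(r,t;q)$ given in \Cref{lemma:dtimes-gen}, which is available since $\gcd(k,t) = 1$, and divide out the modular prefactor $\xi_k(q)$ to isolate
\[
    L_k^\times(r,t;q) = \sum_{m \equiv r \Mod t} \frac{q^m}{1 - q^m} - \sum_{m \equiv \bar r \Mod t} \frac{q^{mk}}{1 - q^{mk}}.
\]
The right-hand side of the lemma already looks like this after a reindexing, so the whole argument is essentially a change of summation variable; there is no serious obstacle.

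For the first sum, since $1 \leq r \leq t$, every positive integer $m \equiv r \Mod t$ can be written uniquely as $m = \ell t + r$ with $\ell \geq 0$. Substituting $q = e^{-z}$ gives
\[
    \frac{q^m}{1 - q^m} = \frac{e^{-(\ell t + r)z}}{1 - e^{-(\ell t + r)z}} = E_\times\bigl((\ell t + r)z\bigr),
\]
which matches the first sum on the right-hand side of the lemma after summing over $\ell \geq 0$.

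For the second sum, the same parametrization applies with $r$ replaced by $\bar r$: since $1 \leq \bar r \leq t$ is a representative of $k^{-1}r \Mod t$, every positive integer $m \equiv \bar r \Mod t$ is uniquely $m = \ell t + \bar r$ with $\ell \geq 0$. Then
\[
    \frac{q^{mk}}{1 - q^{mk}} = \frac{e^{-(\ell t + \bar r)kz}}{1 - e^{-(\ell t + \bar r)kz}} = E_\times\bigl((\ell t + \bar r)kz\bigr),
\]
yielding the second sum after summing over $\ell \geq 0$. Subtracting the two completes the proof. The only thing worth a remark is why we preferred the coprime form of $\D_k^\times(r,t;q)$: in the non-coprime case the same reindexing produces a sum over $m \equiv r \Mod t$ with $k \nmid m$, which no longer collapses into two clean arithmetic progressions and thus requires separate casework in the Euler--Maclaurin analysis that follows, explaining the coprimality hypothesis throughout.
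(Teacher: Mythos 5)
Your proof is correct and matches the paper's approach; the paper simply states that the identity is immediate from the definitions (together with the coprime splitting already carried out in the proof of Lemma~\ref{lemma:dtimes-gen}), and your explicit reindexing $m = \ell t + r$ and $m = \ell t + \bar r$ is exactly what that shorthand unpacks to.
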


\begin{proof}
    Immediate from the definitions of $L_k^\times(r,t;q)$ and $E_\times(z)$.
\end{proof}

\Cref{lemma:lrt-times-e} will allow us to use Euler-Maclaurin summation to estimate $L_k^\times(r,t;q)$ on the major arc when $k,t$ are coprime using similar techniques to those found in \cite{beckwith2017,craig,kregular}. On the other hand, if $\gcd(k,t) = d > 1$, then we can reduce to when $k,t$ are coprime. In particular, the $\xi_k$ component does not change in this case, and the summatory component changes in a predictable way. This involves casework on $d$ and on $r$. In these cases, the summatory component is either zero, matches that of Beckwith and Mertens,\footnote{As before, Beckwith and Mertens include a factor of $(2\pi)^{-1/2}q^{1/24}$} or may be written as $L_{k/d}^\times(r/d,t/d,q^d)$. Thus, the methods established in this paper and in \cite{beckwith2017} would be sufficient to analyze the behavior of the biases when $\gcd(k,t) > 1$.


We will now provide a polylogarithm expansion for $E_\times(z)$, as well as a series expansion near zero which we may apply when performing Euler-Maclaurin summation.

\begin{lemma}\label{lemma:etimes-series}
    We have that
    \begin{align}
        E_\times^{(N)}(z) = (-1)^N\Li_{-N}(q) = \frac{N!(-1)^N}{z^{N+1}} + \sum_{m = 0}^\infty \frac{B_{N+m+1}}{(N+m+1) \cdot m!}z^m,
    \end{align}
    where the second equality only holds when $\abs{z} < 2\pi$.
\end{lemma}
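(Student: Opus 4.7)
The plan is to prove the two claimed equalities separately and then combine them. The first equality $E_\times^{(N)}(z) = (-1)^N \Li_{-N}(q)$ (where $q = e^{-z}$) is a routine induction on $N$. The base case $N = 0$ is just the definition $E_\times(z) = \Li_0(q)$ stated in the excerpt. For the inductive step, I would apply \cref{eq:polylog-der}, which gives $\frac{\partial}{\partial z}\Li_s(q) = -\Li_{s-1}(q)$; differentiating both sides of the inductive hypothesis yields
\[
E_\times^{(N+1)}(z) = (-1)^N \frac{\partial}{\partial z}\Li_{-N}(q) = (-1)^{N+1} \Li_{-N-1}(q),
\]
completing the induction.

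For the second equality, valid when $\abs{z} < 2\pi$, I would plug $s = -N$ into the series expansion \cref{eq:polylog-series}. Since $\Gamma(1-s) = \Gamma(1+N) = N!$ and $z^{s-1} = z^{-N-1}$, this gives
\[
\Li_{-N}(q) = \frac{N!}{z^{N+1}} + \sum_{n=0}^\infty \zeta(-N-n) \frac{(-z)^n}{n!}.
\]
The main computational step is to convert the Hurwitz-type values $\zeta(-N-n)$ into Bernoulli numbers. Using the standard identity (with the NIST convention $B_1 = -1/2$)
\[
\zeta(-k) = (-1)^k \frac{B_{k+1}}{k+1} \qquad (k \geq 0),
\]
applied with $k = N+n$, each term in the sum becomes
\[
\zeta(-N-n)\frac{(-z)^n}{n!} = \frac{(-1)^{N+n} B_{N+n+1}}{N+n+1} \cdot \frac{(-1)^n z^n}{n!} = \frac{(-1)^N B_{N+n+1}}{(N+n+1)\, n!}\, z^n,
\]
where I used $(-1)^{2n} = 1$. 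Multiplying through by $(-1)^N$ then collapses the sign out of the sum and yields exactly
\[
(-1)^N \Li_{-N}(q) = \frac{(-1)^N N!}{z^{N+1}} + \sum_{m=0}^\infty \frac{B_{N+m+1}}{(N+m+1)\, m!}\, z^m,
\]
as claimed. Combining this with the differentiation identity from the first step gives both equalities.

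There is no real obstacle here; the only subtle point is keeping track of the sign conventions for Bernoulli numbers versus $\zeta$-values at non-positive integers, and verifying that the factor $(-1)^{N+n} \cdot (-1)^n = (-1)^N$ is independent of $n$ so that it can be factored out of the summation to match the clean statement in the lemma.
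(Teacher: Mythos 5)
Your proposal is correct and follows the same route the paper takes: the paper's one-line proof invokes exactly the polylogarithm derivative rule \eqref{eq:polylog-der}, the expansion \eqref{eq:polylog-series}, and the special value $\zeta(-N-m) = (-1)^{N+m}\frac{B_{N+m+1}}{N+m+1}$, which are the three ingredients you spell out. Your version simply fills in the short induction and the sign bookkeeping that the paper leaves implicit.
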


\begin{proof}
    Immediate from \eqref{eq:polylog-series} and \eqref{eq:polylog-der} as well as the special values $\zeta(-N-m) = (-1)^{N+m}\frac{B_{N+m+1}}{N+m+1}$.
\end{proof}

\begin{remark}
    $E_\times$ and all its derivatives have rapid decay within any region $D_\theta$. However, $E_\times^{(N)}$ does have a pole of order $N + 1$ at zero, which through \Cref{prop:euler-mac-asym-pole} ultimately leads to the digamma function appearing in the asymptotic derived in \Cref{thm:indiv-asym}. In contrast, the pole cancels in the analogue of $E_\times$ within \cite{kregular}.
\end{remark}

Specializing \Cref{lemma:etimes-series} to $N = 0$, we have a series expansion for $E_\times$ near zero, by which we define constants $e_{m}$ by
\begin{align*}
    E_\times(z) = \frac{e_{-1}}{z} + \sum_{m \geq 0} \frac{e_{m}}{m!} z^m \coloneqq \frac{1}{z} + \sum_{m \geq 0} \frac{B_{m+1}}{(m+1) \cdot m!} z^m.
\end{align*}

We now state for later use the transformation law for $\xi_k(q)$, which may also be found in \cite[Lemma~3.4]{kregular}.

\begin{lemma}\label{lemma:xi-transform}
    For $q = e^{-z}$ and $\e \coloneqq \exp\left(-\frac{4\pi^2}{kz}\right)$, we have that
    \[
        \xi_k(q) = \frac{1}{\sqrt{k}}\exp\lr{\frac{\pi^2}{6z}\lr{1 -\frac{1}{k}} + \frac{z}{24}(k-1)}\frac{\P(\e^k)}{\P(\e)}.
    \]
\end{lemma}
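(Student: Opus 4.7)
The plan is to recognize that $\xi_k(q) = (q^k;q^k)_\infty / (q;q)_\infty = \mathcal{P}(q)/\mathcal{P}(q^k)$ and then apply the classical modular transformation law \eqref{eq:modular-p} from \Cref{subs:classic-partitions} separately to the numerator and denominator. No other machinery is needed; the identity then pops out from a bookkeeping exercise.

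Concretely, I would first write, for $q = e^{-z}$ with $\Re(z) > 0$,
\[
    \xi_k(q) = \frac{\mathcal{P}(e^{-z})}{\mathcal{P}(e^{-kz})}.
\]
Applying \eqref{eq:modular-p} to the numerator gives
\[
    \mathcal{P}(e^{-z}) = \sqrt{\frac{z}{2\pi}}\exp\!\left(\frac{\pi^2}{6z} - \frac{z}{24}\right)\mathcal{P}\!\left(e^{-4\pi^2/z}\right),
\]
and applying it to the denominator (with $z$ replaced by $kz$) gives
\[
    \mathcal{P}(e^{-kz}) = \sqrt{\frac{kz}{2\pi}}\exp\!\left(\frac{\pi^2}{6kz} - \frac{kz}{24}\right)\mathcal{P}\!\left(e^{-4\pi^2/(kz)}\right).
\]

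Dividing, the prefactor $\sqrt{z/(2\pi)}/\sqrt{kz/(2\pi)}$ collapses to $1/\sqrt{k}$, the exponentials combine as
\[
    \frac{\pi^2}{6z} - \frac{\pi^2}{6kz} - \frac{z}{24} + \frac{kz}{24} = \frac{\pi^2}{6z}\!\left(1 - \frac{1}{k}\right) + \frac{z(k-1)}{24},
\]
and substituting $\varepsilon \coloneqq e^{-4\pi^2/(kz)}$, so that $e^{-4\pi^2/z} = \varepsilon^k$, converts the ratio of $\mathcal{P}$-factors into $\mathcal{P}(\varepsilon^k)/\mathcal{P}(\varepsilon)$. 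Collecting these pieces yields exactly the claimed expression.

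There is no real obstacle here; the only thing to be careful about is tracking the two invocations of \eqref{eq:modular-p} (once with argument $z$, once with argument $kz$) and matching the resulting inner arguments $e^{-4\pi^2/z}$ and $e^{-4\pi^2/(kz)}$ to the appropriate powers of $\varepsilon$. Since the modular transformation \eqref{eq:modular-p} is an identity (not merely an asymptotic), no convergence or uniformity considerations arise in this step beyond $\Re(z) > 0$.
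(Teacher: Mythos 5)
Your proof is correct and is exactly the approach the paper takes: write $\xi_k(q) = \P(q)/\P(q^k)$ and apply the modular transformation law \eqref{eq:modular-p} to numerator and denominator separately, then divide. The paper simply states this in one line, while you carry out the bookkeeping explicitly; the computations match.
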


\begin{proof}
    This follows immediately from the modular transformation law \eqref{eq:modular-p} and the expression $\xi_k(q) = \frac{\P(q)}{\P(q^k)}$.
\end{proof}

\subsection{Asymptotics on the Major Arc}\label{subs:major-arc}


In this subsection, we provide the necessary asymptotics for $\xi_k(q)$ and $L_k^\times(r,t;q)$ on the major arc which we will need to prove \Cref{thm:indiv-asym}. The arguments for $L_k^\times(r,t;q)$ follow similarly to those for $L_k(r,t;q)$ in \cite{kregular}, but they require the use of \Cref{prop:euler-mac-asym-pole} because $E_\times$ has a pole at zero. This leads to the appearence of the digamma function $\psi$ in the asymptotic derived in \Cref{thm:indiv-asym}.

We now obtain the asymptotic for $L^\times_k(r,t;q)$ on the major arc. Let $\Delta > 0$ be fixed, and set $\de \coloneqq \sqrt{1 + \Delta^2}$. Furthermore from here on we refer to $D_\theta \coloneqq \{\eeta + iy \mid \eeta > 0, \abs{y} \leq \Delta \eeta\}$ as the major arc.

\begin{lemma}\label{lemma:e-times-major-arc}
    Let $0 < a \leq 1$, then
    \begin{align*}
        \sum_{m \geq 0} E_\times((m+a)z) = -\frac{\log z}{z} - \frac{\psi(a)}{z} + O(1)
    \end{align*}
    as $z \to 0$ uniformly in $D_\theta$.
\end{lemma}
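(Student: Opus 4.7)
The plan is to apply \Cref{prop:euler-mac-asym-pole} directly with $f = E_\times$, $A = 1$, and the expansion from \Cref{lemma:etimes-series} (with $N = 0$), which gives $n_0 = -1$, $c_{-1} = 1$, and $c_n = \frac{B_{n+1}}{(n+1)\cdot n!}$ for $n \geq 0$. Since $E_\times$ and all its derivatives have rapid decay in $D_\theta$ (the denominator $1-e^{-z}$ is bounded away from zero there), the decay hypothesis is satisfied. Because $n_0 = -1$, the finite sum $\sum_{n=n_0}^{-2} c_n \zeta(-n,a) z^n$ in \Cref{prop:euler-mac-asym-pole} is empty, so the proposition yields
\[
\sum_{m \geq 0} E_\times((m+a)z) = \frac{I^*_{f,1}}{z} - \frac{1}{z}\bigl(\log z + \psi(a) + \gamma\bigr) - \sum_{n \geq 0} \frac{B_{n+1}}{(n+1)\cdot n!}\cdot\frac{B_{n+1}(a)}{n+1}z^n.
\]
The last series converges and contributes $O(1)$ as $z \to 0$ in $D_\theta$, so to finish it suffices to show $I^*_{f,1} = \gamma$.

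The key computation, and the only nontrivial step, is evaluating
\[
I^*_{f,1} = \int_0^\infty \left(E_\times(u) - \frac{e^{-u}}{u}\right) \d u = \int_0^\infty \left(\frac{e^{-u}}{1-e^{-u}} - \frac{e^{-u}}{u}\right) \d u.
\]
I would compute this by invoking the integral representation \eqref{eq:integral-psi} of the digamma function at $x = 1$, namely
\[
\psi(1) = \int_0^\infty \left(\frac{e^{-u}}{u} - \frac{e^{-u}}{1-e^{-u}}\right) \d u,
\]
and combining with the special value $\psi(1) = -\gamma$ from \eqref{eq:psi-special-value} to obtain $I^*_{f,1} = \gamma$.

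Substituting $I^*_{f,1} = \gamma$ back into the asymptotic formula produces
\[
\sum_{m \geq 0} E_\times((m+a)z) = -\frac{\log z}{z} - \frac{\psi(a)}{z} + O(1),
\]
which is the claimed estimate. The main obstacle is really just bookkeeping—making sure the constants arising from the pole term match up correctly—and recognizing that the requisite integral is precisely what the integral representation of $\psi(1)$ evaluates (up to sign). There is no delicate analysis beyond invoking \Cref{prop:euler-mac-asym-pole}, as the rapid decay of $E_\times$ inside $D_\theta$ and its expansion at the origin supply exactly the hypotheses needed.
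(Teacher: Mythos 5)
Your proposal is correct and follows essentially the same route as the paper: apply \Cref{prop:euler-mac-asym-pole} with $A=1$ and the expansion from \Cref{lemma:etimes-series}, then identify $I^\ast_{E_\times,1}$ with $-\psi(1)=\gamma$ via \eqref{eq:integral-psi} and \eqref{eq:psi-special-value} so that the $\gamma/z$ terms cancel. One small caution on wording: the trailing $\sum_{n\geq 0}c_n\frac{B_{n+1}(a)}{n+1}z^n$ is an \emph{asymptotic} series rather than a convergent one, but this does not matter for the conclusion since the $n=0$ term is a constant and the higher terms are $O(z)$, giving the claimed $O(1)$ as $z\to 0$ in $D_\theta$.
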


\begin{proof}
    We apply \Cref{prop:euler-mac-asym-pole} to $E_\times$ with $A = 1$ using the series expansion derived in \Cref{lemma:etimes-series}. Because $E_\times$ has principal part $\frac{1}{z}$, this yields
    \begin{align*}
        \sum_{m \geq 0} E_\times((m+a)z) = \frac{I_{E_\times,1}^\ast}{z} - \frac{1}{z}\left(\log(z) + \psi(a) + \gamma\right) + O(1).
    \end{align*}
    We now must compute $I^\ast_{E_\times,1}$, this is precisely
    \begin{align*}
        I^\ast_{E_\times,1} = \int_0^\infty \frac{e^{-u}}{1 - e^{-u}} - \frac{e^{-u}}{u} \d u.
    \end{align*}
    By the integral representation for $\psi$ provided in \cref{eq:integral-psi} we have that $I^\ast_{E_\times,1} = -\psi(1)$. We also see that $\psi$ takes on the special value $\psi(1) = \gamma$ from \cref{eq:psi-special-value}. This concludes the proof.
\end{proof}

We may now apply \Cref{lemma:e-times-major-arc} to each portion of $L^\times_k(r,t;q)$ separately.

\begin{corollary}\label{cor:lrt-times-major-arc}
    Let $k,t \geq 2$ be coprime integers and let $1 \leq r \leq t$, then 
    \begin{align*}
        L^\times_k(r,t;q) = -\frac{K\log z}{tz} + \frac{1}{tz}\left(k^{-1}\psi\left(\frac{\bar{r}}{t}\right) - \psi\left(\frac{r}{t}\right) - K \log t + \frac{\log k}{k}\right) + O(1),
    \end{align*}
    for $K \coloneqq 1 - 1/k$, as $z \to 0$ uniformly in $D_\theta$.
\end{corollary}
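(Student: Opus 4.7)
The strategy is to directly substitute the expression for $L_k^\times(r,t;q)$ from \Cref{lemma:lrt-times-e} into the asymptotic provided by \Cref{lemma:e-times-major-arc}, performing a change of variables in each summand so that \Cref{lemma:e-times-major-arc} applies.

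For the first sum, I would write $\ell t + r = t\left(\ell + \tfrac{r}{t}\right)$ so that
\[
\sum_{\ell \geq 0} E_\times((\ell t + r)z) = \sum_{\ell \geq 0} E_\times\!\left((\ell + \tfrac{r}{t})\cdot tz\right),
\]
and then apply \Cref{lemma:e-times-major-arc} with $a = r/t$ and the new variable $w = tz$. Since $t > 0$ is a fixed constant, $w$ ranges through $D_\theta$ as $z$ does, and $w \to 0$ iff $z \to 0$, so the estimate is uniform on $D_\theta$. This yields
\[
\sum_{\ell \geq 0} E_\times((\ell t + r)z) = -\frac{\log(tz)}{tz} - \frac{\psi(r/t)}{tz} + O(1).
\]
For the second sum, the same trick with $\ell t + \bar{r} = t(\ell + \bar{r}/t)$ but with the variable $w = ktz$ gives
\[
\sum_{\ell \geq 0} E_\times((\ell t + \bar{r})kz) = -\frac{\log(ktz)}{ktz} - \frac{\psi(\bar{r}/t)}{ktz} + O(1).
\]

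Subtracting the two expansions and collecting terms of like type is then a matter of bookkeeping. The $\log z$ terms combine to $-\frac{\log z}{tz}(1 - 1/k) = -\frac{K\log z}{tz}$, the $\log t$ terms combine to $-\frac{K\log t}{tz}$, the $\log k$ term from the second sum contributes $\frac{\log k}{ktz}$, and the digamma terms give $\frac{1}{tz}\bigl(k^{-1}\psi(\bar{r}/t) - \psi(r/t)\bigr)$. Factoring out $\frac{1}{tz}$ recovers the stated formula, with the $O(1)$ error persisting through the subtraction.

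I do not expect any real obstacle here, since everything reduces to a linear change of variables followed by a subtraction. The only technical point to verify is that the scaling $z \mapsto tz$ and $z \mapsto ktz$ preserves uniform convergence in $D_\theta$, which is immediate because $t$ and $k$ are positive constants and $D_\theta$ is a cone closed under positive scaling. The coprimality hypothesis is needed only to invoke the second form of $L_k^\times(r,t;q)$ from \Cref{lemma:lrt-times-e}, not in any analytic step.
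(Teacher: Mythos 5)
Your proof is correct and follows essentially the same route as the paper: split $L_k^\times$ via \Cref{lemma:lrt-times-e}, rescale $z \mapsto tz$ and $z \mapsto ktz$, apply \Cref{lemma:e-times-major-arc} with $a = r/t$ and $a = \bar{r}/t$, then subtract. The paper's own proof is just a terser version of this bookkeeping.
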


\begin{proof}
    Write $L^\times_k(r,t;q)$ as in \Cref{lemma:lrt-times-e} as
    \begin{align*}
        L^\times(r,t;q) = \sum_{m \geq 0} E_\times\left(\left(m + \frac{r}{t}\right)tz\right) - \sum_{m \geq 0} E_\times\left(\left(m + \frac{\bar{r}}{t}\right)tkz\right)
    \end{align*}
    and then apply \Cref{lemma:e-times-major-arc} with $a = \frac{r}{t}, \frac{\bar{r}}{t}$ and the changes of variables $z = tz,tkz$ respectively.
\end{proof}

For ease of reading, we now prove an asympototic version of Lemma 3.8 from \cite{kregular}. To do so, we first require a basic lemma concerning exponential sums and geometric series, which appears as Lemma 3.7 in \cite{kregular}. 

\begin{lemma}\label{lemma:mvt-geom}
    Suppose we have a series of the form $\sum_{m \geq b} e^{f(x)}$ such that $f'(x)$ is decreasing and $f'(b) < 0$ then
    \[
        \sum_{m \geq b} e^{f(x)} \leq \frac{e^{f(b)}}{1 - e^{f'(b)}}.
    \]
\end{lemma}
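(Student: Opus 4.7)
The label of the lemma already hints at the strategy: combine the mean value theorem with the standard geometric series formula. The underlying idea is that the two hypotheses force $e^{f(m)}$ to decay at least geometrically in $m$ once $m \geq b$, after which the infinite sum collapses to a geometric series and the bound drops out.

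The plan is as follows. First, for each integer $j \geq b$, apply the mean value theorem on $[j, j+1]$ to produce some $\xi_j \in (j, j+1)$ with $f(j+1) - f(j) = f'(\xi_j)$. Since $f'$ is decreasing and $\xi_j > j \geq b$, we have $f'(\xi_j) \leq f'(b)$. Telescoping these increments over $j = b, b+1, \dots, m-1$ gives $f(m) - f(b) \leq (m - b) f'(b)$ for every integer $m \geq b$. Exponentiating and summing,
\[
\sum_{m \geq b} e^{f(m)} \leq e^{f(b)} \sum_{m \geq b} e^{(m - b) f'(b)} = e^{f(b)} \sum_{k \geq 0} \left(e^{f'(b)}\right)^{k} = \frac{e^{f(b)}}{1 - e^{f'(b)}},
\]
where the geometric series converges because the hypothesis $f'(b) < 0$ forces $0 < e^{f'(b)} < 1$.

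There is essentially no obstacle to this argument; the only point to watch is the direction of the inequality. The hypothesis that $f'$ is decreasing is exactly what is needed to bound every subsequent slope $f'(\xi_j)$ from above by $f'(b)$, and the assumption $f'(b) < 0$ simultaneously guarantees that $f(m)$ decreases fast enough to give convergence and that the resulting geometric ratio lies in $(0, 1)$. In practice, this lemma will be applied later in the paper to obtain uniform tail bounds of the form $\sum_{m \geq M} e^{f(m)}$ on the minor arc, where $f(m)$ will typically be something like $-\Re(\alpha m z)$ for a suitable positive linear function of $m$; verifying the hypotheses in those applications is a routine check.
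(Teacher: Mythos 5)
Your proof is correct and takes essentially the same route as the paper's: apply the mean value theorem on each interval $[j,j+1]$, use the monotonicity of $f'$ to bound the increments by $f'(b)$, telescope to get $f(m) - f(b) \leq (m-b)f'(b)$, and then sum the resulting geometric series. You just spell out the telescoping step a bit more explicitly than the paper does.
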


\begin{proof}
    By the mean value theorem, for every $m \geq b$, $f(m+1)-f(m) = f'(c) \leq f'(b)$ for $c$ between $m$ and $m + 1$. Therefore, using the geometric series formula,
    \begin{align*}
        \sum_{m \geq b} e^{f(x)} \leq \sum_{m \geq b} e^{f(b)}e^{(m-b)f'(b)} = \frac{e^{f(b)}}{1-e^{f'(b)}}. \tag*{\qedhere}
    \end{align*}
\end{proof}
We may now prove the required asymptotic
\begin{lemma}\label{lemma:major-arc-xi}
    Let $z = \eeta + iy$ be any complex number $0 \leq \abs{y} \leq \Delta \eeta$, then as $\eeta \to 0$ we have
    \begin{align*}
        \xi_k(q) = \Phi_k(z)\left(1 + O(e^{-\rho/z})\right),
    \end{align*}
    where $\rho > 0$ and
    \begin{align*}
        \Phi_k(z) \coloneqq \frac{1}{\sqrt{k}}\exp\left(\frac{\pi^2}{6z}\left(1 - \frac{1}{k}\right) + \frac{z}{24}(k-1)\right).
    \end{align*}
\end{lemma}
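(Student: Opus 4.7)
The natural strategy is to start from the modular transformation law already recorded in Lemma~\ref{lemma:xi-transform}, which expresses
\[
\xi_k(q) = \Phi_k(z) \cdot \frac{\P(\e^k)}{\P(\e)}, \qquad \e = \exp\!\left(-\tfrac{4\pi^2}{kz}\right).
\]
Thus it suffices to prove that $\P(\e^k)/\P(\e) = 1 + O(e^{-\rho/z})$ uniformly on the major arc. The whole argument rests on one observation: even though $z$ is small, the quantity $\e$ is exponentially \emph{small} in $1/\eeta$, because the argument $-4\pi^2/(kz)$ has large negative real part.

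More precisely, I would begin by recording the major-arc geometry. For $z = \eeta + iy$ with $|y| \le \Delta \eeta$ we have $|z|^2 \le \de^2 \eeta^2$, so
\[
\Re\!\left(\tfrac{1}{z}\right) = \frac{\eeta}{|z|^2} \ge \frac{1}{\de^2 \eeta}.
\]
Consequently
\[
|\e| = \exp\!\left(-\tfrac{4\pi^2}{k} \Re\!\left(\tfrac{1}{z}\right)\right) \le \exp\!\left(-\tfrac{4\pi^2}{k \de^2 \eeta}\right),
\]
and, more importantly, $|\e|$ is bounded by $|e^{-\rho/z}|$ for any $\rho < 4\pi^2/k$. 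In particular $|\e| \to 0$ (indeed, exponentially) as $\eeta \to 0^+$, so we may freely assume $|\e| < 1/2$ in what follows.

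Next, Euler's pentagonal number theorem \eqref{eq:pentagonal} gives an absolutely convergent expression
\[
\P(\e)^{-1} - 1 = \sum_{m \ge 1} (-1)^m \bigl( \e^{m(3m+1)/2} + \e^{m(3m-1)/2}\bigr),
\]
whose leading term is $-\e$. A crude comparison with a geometric series yields $|\P(\e)^{-1} - 1| \ll |\e|$ for $|\e| < 1/2$, and the identical reasoning with $\e$ replaced by $\e^k$ gives $|\P(\e^k)^{-1} - 1| \ll |\e|^k$. Inverting and forming the quotient, I would conclude
\[
\frac{\P(\e^k)}{\P(\e)} = \bigl(1 + O(|\e|^k)\bigr) \bigl(1 + O(|\e|)\bigr) = 1 + O(|\e|).
\]
Combining this with the bound $|\e| \le |e^{-\rho/z}|$ for any fixed $0 < \rho < 4\pi^2/k$ completes the proof.

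Nothing in this outline is genuinely hard; the only point that requires care is the initial lower bound $\Re(1/z) \gtrsim 1/\eeta$, which is precisely what distinguishes the major arc from the minor arc and what makes $|\e|$ exponentially small. Once that is in hand, the pentagonal number theorem turns $\P(\e)$ and $\P(\e^k)$ into tiny perturbations of $1$, and the stated asymptotic falls out.
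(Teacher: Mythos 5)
Your proof is correct, and it starts from the same place as the paper's: applying the modular transformation law (Lemma~\ref{lemma:xi-transform}) to reduce the claim to showing $\P(\e^k)/\P(\e)=1+O(e^{-\rho/z})$, and observing that the major-arc geometry forces $\Re(1/z)\gtrsim 1/\eeta$ so that $|\e|$ is exponentially small. The one genuine difference is how you handle the numerator $\P(\e^k)$. The paper expands $\P(\e^k)=\sum_{m\ge 0}p(m)\e^{km}$ as a power series, invokes the subexponential bound $p(m)\le e^{(\pi^2/6+1)\sqrt m}$ from \eqref{eq:subexp-p}, and then uses the auxiliary mean-value-theorem estimate of Lemma~\ref{lemma:mvt-geom} to show the tail is $O(\e^k)$. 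You instead apply Euler's pentagonal number theorem \eqref{eq:pentagonal} to $\P(\e^k)^{-1}$ as well, bound it by a geometric series, and invert. Your route is a bit leaner: it entirely avoids the growth estimate on $p(n)$ and Lemma~\ref{lemma:mvt-geom}, using only the pentagonal number theorem and a trivial geometric-series bound for both factors. The paper's approach, while heavier here, reuses machinery (the $p(n)$ bound and Lemma~\ref{lemma:mvt-geom}) that also appears in the minor-arc estimate of Lemma~\ref{lemma:minor-arc-xi-ngo}, so the cost is amortized across the paper; as a standalone proof of this lemma, yours is cleaner. One small remark: you can simply take $\rho=4\pi^2/k$ (giving $|\e|=|e^{-\rho/z}|$ exactly); restricting to $\rho<4\pi^2/k$ is unnecessary, though harmless.
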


\begin{proof}
    Recalling that $\e = e^{-4\pi^2/kz}$, \Cref{lemma:xi-transform} and Euler's pentagonal number theorem \eqref{eq:pentagonal} gives
    \begin{align*}
        \xi_k(q) = \Phi_k(z) \cdot \frac{\P(\e^k)}{\P(\e)} = \Phi_k(z) \cdot \left(1 + \sum_{m \geq 1} (-1)^m\left(\e^{\frac{m(3m+1)}{2}} + \e^{\frac{m(3m-1)}{2}}\right)\right)\left(1 + \sum_{m \geq 1} p(m)\e^{km}\right).
    \end{align*}
    Applying \cref{eq:subexp-p}, we may write
    \[
        \abs{\sum_{m \geq 1} p(m)\e^{km}} \leq \sum_{m \geq 1} e^{2.7\sqrt{m}}\abs{\e}^{km} = \sum_{m \geq 1} e^{2.7\sqrt{m} - 4\pi^2m\Re(1/z)}.
    \]
    Using that $\Re(1/z) = \frac{\eeta}{\abs{z}^2} \geq \frac{1}{\de \eeta}$ and differentiating with respect to $m$ yields
    \begin{align*}
        \frac{\d}{\d m} \left(2.7\sqrt{m} - 4\pi^2m\Re(1/z)\right) = \frac{1.35}{\sqrt{m}} - 4\pi^2\Re\left(\frac{1}{z}\right) \leq 1.35 - \frac{4\pi^2}{\de\eeta}.
    \end{align*}
    For small enough $\eeta$, this quantity is negative, and so \Cref{lemma:mvt-geom} applies to yield $\P(\e^k) - 1 = O(\e^k)$. Therefore, we have that
    \begin{align*}
        \xi_k(q) &= \Phi_k(q) \cdot (1 + O(\e))(q + O(\e^k)) = \Phi_k(q)(1 + O(e^{-\rho/z})),
    \end{align*}
    where $\rho = -4\pi^2/k > 0$.
\end{proof}

\subsection{Asymptotics on the Minor Arc}\label{subs:minor-arc}

In this subsection, we bound $L^\times_k(r,t;q)$ and $\xi_k(q)$ on the minor arc. We begin by bounding $L^\times_k(r,t;q)$.

\begin{lemma}\label{lemma:lrt-times-minor-arc}
    Let $k,t \geq 2$ and $1 \leq r \leq t$. Assuming that $z = \eeta + iy$ satisfies $\eeta > 0$, then we have
    \begin{align*}
        \abs{L_k^\times(r,t;q)} \leq \frac{e^{\eeta}}{\eeta^2}.
    \end{align*}
\end{lemma}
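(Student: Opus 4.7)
The plan is to derive the bound by a direct termwise estimate of the series defining $L^\times_k(r,t;q)$, discarding all arithmetic restrictions (coprimality with $k$ and the residue condition mod $t$) and then summing a geometric-type series.

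First I would expand each summand as a geometric series: since $|q| = e^{-\eta} < 1$, we have
\[
\frac{q^m}{1-q^m} = \sum_{j \geq 1} q^{jm},
\qquad\text{hence}\qquad
\left|\frac{q^m}{1-q^m}\right| \leq \sum_{j \geq 1} |q|^{jm} = \frac{e^{-m\eta}}{1-e^{-m\eta}}.
\]
Applying the triangle inequality to the defining sum in \Cref{lemma:dtimes-gen} and discarding the conditions $k \nmid m$ and $m \equiv r \pmod t$ yields
\[
\bigl|L^\times_k(r,t;q)\bigr| \leq \sum_{m \geq 1} \frac{e^{-m\eta}}{1-e^{-m\eta}}.
\]

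Next I would use the trivial monotonicity $1 - e^{-m\eta} \geq 1 - e^{-\eta}$ for $m \geq 1$ to factor out the denominator, reducing the estimate to a geometric series:
\[
\sum_{m \geq 1} \frac{e^{-m\eta}}{1-e^{-m\eta}} \leq \frac{1}{1-e^{-\eta}}\sum_{m \geq 1} e^{-m\eta} = \frac{e^{-\eta}}{(1-e^{-\eta})^{2}}.
\]

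Finally I would clean up using the elementary inequality $1 - e^{-\eta} \geq \eta e^{-\eta}$ (equivalent to $e^{\eta} \geq 1 + \eta$, which holds for $\eta \geq 0$). This gives $(1-e^{-\eta})^2 \geq \eta^2 e^{-2\eta}$, so
\[
\frac{e^{-\eta}}{(1-e^{-\eta})^{2}} \leq \frac{e^{-\eta}}{\eta^{2} e^{-2\eta}} = \frac{e^{\eta}}{\eta^{2}},
\]
completing the proof. There is no real obstacle here—the whole argument is elementary, and the only judgment call is choosing the denominator bound $1 - e^{-\eta} \geq \eta e^{-\eta}$ (rather than, say, $\geq \eta/(1+\eta)$), which is what produces exactly the factor $e^{\eta}/\eta^{2}$ claimed in the lemma.
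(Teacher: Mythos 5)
Your proof is correct, and it reaches the same intermediate bound $\frac{|q|}{(1-|q|)^2}$ as the paper before applying the elementary inequality $e^{\eta}-1 \geq \eta$. The one genuine difference is in how you get there: the paper expands $\frac{|q|^m}{1-|q|^m}$ as a double geometric series, swaps the order of summation to obtain $\sum_{m\geq 1}\sigma_0(m)|q|^m$ (where $\sigma_0$ is the divisor-counting function), and then bounds $\sigma_0(m)\leq m$ to land on $\sum_{m\geq 1} m|q|^m = \frac{|q|}{(1-|q|)^2}$. You instead use the simple monotonicity $1-e^{-m\eta}\geq 1-e^{-\eta}$ to pull the denominator out and sum the remaining geometric series directly. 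Both routes are elementary and equally tight for the purpose of the lemma; yours avoids the divisor function entirely, while the paper's phrasing follows the template of the analogous minor-arc bound in the authors' earlier work on $k$-regular partitions. Either is fine, and the final clean-up step is identical.
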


\begin{proof}
    By the triangle inequality and arguing similarly to Lemma 3.10 in \cite{kregular}, we have that
    \begin{align*}
        \abs{L^\times(r,t;q)} &\leq \sum_{m \geq 1} \frac{\abs{q}^m}{1 - \abs{q}^m} = \sum_{m \geq 1} \sigma_0(m)\abs{q}^m,
    \end{align*}
    where $\sigma_0$ denotes the number of positive integer divisors of $m$. Because $\sigma_0(m) \leq m$, we have
    \begin{align*}
        \abs{L^\times(r,t;q)} \leq \sum_{m \geq 1} mq^m = \frac{\abs{q}}{(1 - \abs{q})^2}.
    \end{align*}
    Applying the tangent bound for $e^{-z}$, we see that
    \begin{align*}
        \frac{\abs{q}}{(1 - \abs{q})^2} = \frac{e^{\eeta}}{(e^{\eeta} - 1)^2} \leq \frac{e^{\eeta}}{\eeta^2}. \tag*{\qedhere}
    \end{align*}
\end{proof}

In order to apply Ngo and Rhodes version of Wright's circle method (see \Cref{thm:ngo-circle}), we require the following estimate for $\abs{\xi_k(q)}$ in terms of $\xi_k(\abs{q})$ on the minor arc.
\begin{lemma}\label{lemma:minor-arc-xi-ngo}
    As $z = \eeta + iy \to 0$ within the region $\Delta \eeta \leq \abs{y} \leq \pi$ we have that
    \[
        \abs{\xi_k(q)} \ll_{k,\Delta} \xi_k(\abs{q})e^{\frac{-\epsilon}{\eeta}},
    \]
    for some $\epsilon > 0$, so long as $\Delta$ is sufficiently large.
\end{lemma}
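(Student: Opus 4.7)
The plan is to work directly with the infinite product form $\xi_k(q) = \prod_{k \nmid n}(1-q^n)^{-1}$ rather than passing through the modular transformation of \Cref{lemma:xi-transform}. Taking real parts of the Lambert series expansion of $\log \xi_k$ yields the identity
\[
\log \xi_k(\abs{q}) - \log \abs{\xi_k(q)} = \sum_{\substack{n \geq 1 \\ k \nmid n}} \sum_{m \geq 1} \frac{\abs{q}^{mn}(1-\cos(mny))}{m},
\]
in which every summand is non-negative. Since only a lower bound is needed, I would discard the $m \geq 2$ contributions and focus on $S(q) \coloneqq \sum_{k \nmid n} \abs{q}^n(1-\cos(ny))$.

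Writing $S(q)$ as the difference $\sum_{n \geq 1} \abs{q}^n(1-\cos(ny)) - \sum_{m \geq 1}\abs{q}^{km}(1-\cos(kmy))$, each piece evaluates via geometric series to a simple combination of $g(w) \coloneqq (e^w-1)^{-1}$ at $w = \eeta, z, k\eeta, kz$. The Laurent expansion $g(w) = 1/w - 1/2 + O(w)$ together with the identity $\Re(1/z) = \eeta/\abs{z}^2$ produce cancellations that lead to
\[
S(q) = K\left(\frac{1}{\eeta} - \frac{\eeta}{\abs{z}^2}\right) + O(1) = \frac{K\,y^2}{\eeta\,\abs{z}^2} + O(1),
\]
with $K = 1-1/k$. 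The minor-arc hypothesis $\abs{y} \geq \Delta \eeta$ forces $\eeta^2/\abs{z}^2 \leq (1+\Delta^2)^{-1}$, so $y^2/\abs{z}^2 \geq \Delta^2/(1+\Delta^2)$, and combining gives
\[
\log \xi_k(\abs{q}) - \log\abs{\xi_k(q)} \geq \frac{K\Delta^2}{(1+\Delta^2)\,\eeta} + O(1).
\]
Exponentiation then yields the lemma with any $\epsilon < K\Delta^2/(1+\Delta^2)$, a quantity positive for every $\Delta > 0$ and tending to $K$ as $\Delta \to \infty$, which is how the \emph{sufficiently large} $\Delta$ hypothesis enters.

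The main subtlety---and the reason for pursuing this direct route---is that the tempting alternative of substituting the modular transformation of \Cref{lemma:xi-transform} and bounding the explicit factor $\abs{\Phi_k(z)}/\Phi_k(\eeta) = \exp(-\pi^2 K y^2/(6\eeta\abs{z}^2))$ breaks down: on the minor arc the auxiliary variable $\varepsilon = e^{-4\pi^2/(kz)}$ can approach the unit circle, making the residual ratio $\abs{\P(\varepsilon^k)}/\abs{\P(\varepsilon)}$ delicate to control and potentially cancelling the gain. The Lambert-series approach sketched here sidesteps that difficulty entirely; the only routine calculation is the bookkeeping of the $-1/2$ constants that cancel in $g(\eeta) - g(k\eeta) - \Re(g(z) - g(kz))$ to expose the decisive factor $y^2/(\eeta\abs{z}^2)$.
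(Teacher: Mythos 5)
Your Lambert-series route is correct, and it is genuinely different from the paper's argument. The paper proves this lemma by passing through the modular transformation of \Cref{lemma:xi-transform}, writing $\xi_k(q) = \Phi_k(z)\,\P(\e^k)/\P(\e)$ with $\e = e^{-4\pi^2/(kz)}$, extracting the exponential gain $e^{-\epsilon/\eeta}$ from the ratio $\abs{\Phi_k(z)}/\Phi_k(\eeta)$ via $\Re(1/z) \leq \frac{1}{\de^2\eeta}$, and then bounding the residual $\P$-quotient to $O(1)$ using \eqref{eq:logp-absolute} on the numerator and Euler's pentagonal number theorem together with \Cref{lemma:mvt-geom} on the denominator; the requirement that $\Delta$ be large enters there (the paper needs $\de^2 \geq k$ so that $\Re(1/z) \leq \frac{1}{\eeta k}$). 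Your approach instead observes that $\log\xi_k(\abs q) - \log\abs{\xi_k(q)}$ is a Lambert series with nonnegative terms, keeps only the $m=1$ layer, and evaluates it in closed form as $g(\eeta) - g(k\eeta) - \Re\bigl(g(z) - g(kz)\bigr)$ with $g(w) = (e^w-1)^{-1}$; the Laurent expansion (with the constant terms cancelling) exposes the leading factor $K\,y^2/(\eeta\abs z^2)$. This buys two things: it is more elementary (no modular transformation or pentagonal number theorem needed), and it yields the bound for \emph{every} $\Delta > 0$, not merely sufficiently large $\Delta$, with the explicit exponent $\epsilon < K\Delta^2/(1+\Delta^2)$. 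Two small points of rigor worth flagging in your write-up: first, your displayed estimate for $S(q)$ should be a one-sided inequality $S(q) \geq K y^2/(\eeta\abs z^2) - O(1)$ rather than a two-sided asymptotic, because $\Re g(kz)$ can blow up (favorably, to $+\infty$) when $ky$ approaches a multiple of $2\pi$; you acknowledge only a lower bound is needed, but the $=$ as written is not literally true. Second, the expansion $g(w) = 1/w - 1/2 + O(w)$ is only valid for small $\abs w$; for $z$ with $\abs y$ of order one you should instead argue that $g(z) - 1/z$ is bounded on the relevant rectangle away from the poles $2\pi i n$, and that $\Re g(kz) - \Re(1/(kz))$ is bounded \emph{below}, which is what the lower bound actually requires. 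Finally, your criticism that the modular route is ``delicate'' near $\abs\e \to 1$ is somewhat overstated---the paper does control that ratio to $O(1)$---but it is fair to say your route avoids the issue entirely.
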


\begin{proof}
    First we apply \Cref{lemma:xi-transform} to see that
    \begin{align*}
        \xi_k(q) &= \sqrt{\frac{1}{k}}\exp\left(\frac{\pi^2}{6z}\left(1 - \frac{1}{k}\right) + \frac{z}{24}(k-1)\right)\frac{\P(e^{-4\pi^2/z})}{\P(e^{-4\pi^2/kz})}.
    \end{align*}
    Taking absolute values then yields, for $K \coloneqq 1 - 1/k$, that
    \begin{align*}
        \abs{\xi_k(q)} &= \sqrt{\frac{1}{k}}\exp\left(\frac{\pi^2K\Re(1/z)}{6}+ \frac{\eeta}{24}(k-1)\right)\frac{\abs{\P(e^{-4\pi^2/z})}}{\abs{\P(e^{-4\pi^2/kz})}} \\
        &\leq \sqrt{\frac{1}{k}}\exp\left(\frac{\pi^2K\Re(1/z)}{6} + \frac{\eeta}{24}(k-1)\right)\frac{\P(e^{-4\pi^2\Re(1/z)})}{\abs{\P(e^{-4\pi^2/kz})}}.
    \end{align*}
    Now note that, as we are on the minor arc, $\Re\left(\frac{1}{z}\right) = \frac{\eeta}{\abs{z}^2} \leq \frac{1}{\de^2\eeta}$. Thus, we have that
    \begin{align*}
        \exp\left(\frac{\pi^2K\Re(1/z)}{6} + \frac{\eeta}{24}(k-1)\right) \leq \Phi_k(\eeta)\exp\left(-\frac{\pi^2K}{6\eeta}\left(1 - \frac{1}{\de^2}\right)\right).
    \end{align*}
    In light of this inequality, set
    \begin{align*}
        \epsilon \coloneqq \frac{\pi^2K}{6}\left(1 - \frac{1}{\de^2}\right) > 0.
    \end{align*}
    We now apply the transformation law given in \Cref{lemma:xi-transform} once more along with the fact that $\abs{q} = e^{-\eeta}$ to yield
    \begin{align*}
        \abs{\xi_k(q)} \leq \Phi_k(\eeta) e^{-\epsilon/\eeta} \frac{\P(e^{-4\pi^2\Re(1/z)})}{\abs{\P(e^{-4\pi^2/kz})}} = \xi_k(\abs{q}) e^{-\epsilon/\eeta} \frac{\P(e^{-4\pi^2/\eeta})}{\P(e^{-4\pi^2/\eeta k})} \cdot \frac{\P(e^{-4\pi^2\Re(1/z)})}{\abs{\P(e^{-4\pi^2/kz})}}.
    \end{align*}
    We may then choose $\de > 1$ sufficiently large such that $\Re\left(\frac{1}{z}\right) \leq \frac{1}{\eeta k}$. For such $\de$, we have
    \begin{align*}
        \abs{\xi_k(q)} \leq \xi_k(\abs{q})e^{-\epsilon/\eeta} \frac{\P(e^{-4\pi^2/\eeta})}{\abs{\P(e^{-4\pi^2/kz})}}.
    \end{align*}
    Applying \Cref{eq:logp-absolute} furnishes the inequality
    \begin{align*}
        \P\left(e^{-4\pi^2/\eeta}\right) \leq \exp\left(\frac{\pi^2e^{-4\pi^2/\eeta}}{6(1 - e^{-4\pi^2/\eeta})}\right) \leq \exp\left(\frac{\eeta}{24}\right) = O(1),
    \end{align*}
    using the tangent bound for $e^x$. To bound the denominator, we may again use Euler's pentagonal number theorem (as in \Cref{lemma:major-arc-xi}) to see that
    \begin{align*}
         \abs{\P(e^{-4\pi^2/kz})}^{-1} = 1 + O\left(e^{-4\pi^2/kz}\right).
    \end{align*}
    We may then use the inequality $\Re\left(\frac{1}{z}\right) = \frac{\eeta}{\abs{z}^2} \geq \frac{\eeta}{\pi^2 + \eeta^2} > \frac{0.99\eeta}{\pi^2}$ for small $\eeta$ to yield that
    \[
         \abs{\P(e^{-4\pi^2/kz})}^{-1} = 1 + O\left(e^{-3.96\eeta/k}\right) = O(1).
    \]
    Combining all of the above yields the desired estimate
    \begin{align*}
        \abs{\xi_k(q)} \leq \xi_k(\abs{q})\cdot O\left(e^{- \frac{\epsilon}{\eeta}}\right).
    \end{align*}
\end{proof}

\section{Proof of \texorpdfstring{\Cref{thm:indiv-asym}}{Theorem 1.5}}\label{sec:indiv-asym}

Here we apply \Cref{thm:simple-circle,thm:ngo-circle} to $\D_k^\times(r,t;q)$ to obtain \Cref{thm:indiv-asym}.  To apply \Cref{thm:simple-circle,thm:ngo-circle}, we break up our series as
\begin{align}
    \xi_k(q)\left(L_k^\times(r,t;q) + \frac{K\log z}{tz}\right) &\eqqcolon \sum_{n \geq 0} a_k(r,t;n)q^n \eqqcolon \mathcal{A}_k(r,t;q) \label{eq:a-poly-type} \\
    \xi_k(q)\frac{K\log z}{tz} &\eqqcolon \sum_{n \geq 0} c_k(t;n)q^n \label{eq:c-log-type}.
\end{align}
so that $D_k^\times(r,t;n) = a_k(r,t;n) - c_k(t;n)$ by \Cref{lemma:dtimes-gen}. Notice that \Cref{thm:simple-circle} directly applies to \eqref{eq:a-poly-type}, whereas \Cref{thm:ngo-circle} applies to \eqref{eq:c-log-type}.

\begin{lemma}\label{lemma:akrtn}
    Let $k,t \geq 2$ and let $1 \leq r \leq t$. We have
    \begin{align*}
        a_k(r,t;n) = \frac{3^{1/4}e^{\pi\sqrt{\frac{2Kn}{3}}}}{2^{3/4}K^{1/4}n^{1/4}\pi t\sqrt{k}}\left(k^{-1}\psi\left(\frac{\bar{r}}{t}\right) - \psi\left(\frac{r}{t}\right) + \frac{\log k}{k} - K\log t + O(n^{-1/2})\right)
    \end{align*}
    as $n \to \infty$, where $K \coloneqq 1 - 1/k$.
\end{lemma}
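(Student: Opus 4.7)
The plan is to apply \Cref{thm:simple-circle} (the Wright-type circle method of Bringmann--Craig--Males--Ono) directly to the series $\mathcal{A}_k(r,t;q)$. The artificial subtraction of $\frac{K\log z}{tz}$ in the definition of $\mathcal{A}_k$ is engineered precisely so that the product falls into the ``polynomial'' (non-logarithmic) regime covered by \Cref{thm:simple-circle}, leaving the genuinely logarithmic piece to be handled separately via \Cref{thm:ngo-circle} in the companion lemma for $c_k(t;n)$.

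First I would verify hypothesis \ref{item:major-arc} of \Cref{thm:simple-circle}. By \Cref{cor:lrt-times-major-arc}, the sum $L_k^\times(r,t;q) + \frac{K\log z}{tz}$ has a removable logarithmic singularity and equals $\frac{1}{tz}\bigl(k^{-1}\psi(\bar{r}/t) - \psi(r/t) + \frac{\log k}{k} - K\log t\bigr) + O(1)$ as $z \to 0$ in $D_\theta$. Multiplying by the major arc estimate $\xi_k(q) = \Phi_k(z)(1 + O(e^{-\rho/z}))$ from \Cref{lemma:major-arc-xi}, where $\Phi_k(z) = \frac{1}{\sqrt{k}}\exp\bigl(\tfrac{\pi^2 K}{6z} + \tfrac{z(k-1)}{24}\bigr)$, puts $\mathcal{A}_k(r,t;q)$ into the required form with parameters $A = \frac{\pi^2 K}{6}$, $B = -1$, $C = 1$, and
\[
\alpha_0 = \frac{1}{t\sqrt{k}}\left(k^{-1}\psi\!\left(\frac{\bar{r}}{t}\right) - \psi\!\left(\frac{r}{t}\right) + \frac{\log k}{k} - K \log t\right),
\]
to order $N = 1$ (the exponential $e^{z(k-1)/24}$ is absorbed into the Taylor tail and does not affect $\alpha_0$).

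Next I would verify hypothesis \ref{item:minor-arc}. Fixing $\Delta$ large enough that \Cref{lemma:minor-arc-xi-ngo} applies, I combine it with \Cref{lemma:lrt-times-minor-arc} and the trivial bound $\abs{\frac{K\log z}{tz}} \ll \frac{\log \eeta}{\eeta}$ to get
\[
|\mathcal{A}_k(r,t;q)| \;\ll\; \xi_k(|q|)\, e^{-\epsilon/\eeta}\!\left(\frac{e^\eeta}{\eeta^2} + \frac{|\log z|}{\eeta}\right).
\]
Since $\xi_k(|q|) \sim \Phi_k(\eeta) = \frac{1}{\sqrt{k}} e^{\pi^2 K/(6\eeta) + O(\eeta)}$ by \Cref{lemma:major-arc-xi} applied at $z=\eeta$, this bound takes the form $e^{(A-\rho')/\eeta}$ for any $0 < \rho' < \epsilon$, establishing \ref{item:minor-arc}.

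Finally, I would read off the conclusion from \Cref{thm:simple-circle} with $N=1$. The exponent $n^{(-2B-3)/4} = n^{-1/4}$ and the exponential $e^{2\sqrt{An}} = e^{\pi\sqrt{2Kn/3}}$ are immediate, and the constant $c_{0,0} = \frac{A^{-1/4}}{2\sqrt{\pi}} = \frac{6^{1/4}}{2\pi K^{1/4}} = \frac{3^{1/4}}{2^{3/4}\pi K^{1/4}}$. Multiplying by $\alpha_0$ yields the claimed prefactor $\frac{3^{1/4}}{2^{3/4} K^{1/4}\pi t \sqrt{k}}$ times the $\psi$-combination, with error $O(n^{-1/2})$ from the next term in \Cref{thm:simple-circle}. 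The main obstacle is really only the bookkeeping of constants and ensuring $\Delta$ can simultaneously satisfy the cone constraints of both the major arc estimates and \Cref{lemma:minor-arc-xi-ngo}; the cancellation of the $\log z$ term in the definition of $\mathcal{A}_k$ is what makes the standard (non-logarithmic) Wright method directly applicable.
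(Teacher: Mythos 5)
Your proposal follows the same route as the paper: apply \Cref{thm:simple-circle} to $\mathcal{A}_k(r,t;q)$, verify hypothesis \ref{item:major-arc} from \Cref{cor:lrt-times-major-arc} together with \Cref{lemma:major-arc-xi}, verify hypothesis \ref{item:minor-arc} from \Cref{lemma:lrt-times-minor-arc} together with \Cref{lemma:minor-arc-xi-ngo}, and then read off $A=\frac{\pi^2K}{6}$, $B=-1$, and the resulting constant. Your bookkeeping (the value of $c_{0,0}$, the simplification $6^{1/4}/(2\pi)=3^{1/4}/(2^{3/4}\pi)$, and the exponents $n^{-1/4}$, $e^{\pi\sqrt{2Kn/3}}$) is correct, and in fact you are slightly more careful than the paper on the minor arc: the paper cites only \Cref{lemma:lrt-times-minor-arc,lemma:minor-arc-xi-ngo}, silently leaving the polynomially bounded contribution of the extra $\frac{K\log z}{tz}$ term to the reader, whereas you bound it explicitly by $\frac{|\log z|}{\eta}$ and absorb it into the exponential gap $e^{-\epsilon/\eta}$.
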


\begin{proof}

To apply \Cref{thm:simple-circle} to \eqref{eq:a-poly-type}, we must show that
\begin{align*}
    \mathcal{A}_k(r,t;q) = \frac{1}{tz\sqrt{k}}e^{\frac{\pi^2K}{6z}}\left(k^{-1}\psi\left(\frac{\bar{r}}{t}\right) - \psi\left(\frac{r}{t}\right) - K\log t + \frac{\log k}{k} + O(z)\right)
\end{align*}
on the major arc (see condition \ref{item:major-arc}). This follows directly from \Cref{cor:lrt-times-major-arc} and \Cref{lemma:major-arc-xi}. We must also show on the minor arc that
\begin{align*}
    \abs{\mathcal{A}_k(r,t;q)} \ll_{\Delta} e^{\frac{1}{\Re(z)}\left(\frac{\pi^2K}{6} - \rho\right)}
\end{align*}
for some $\rho > 0$ (see condition \ref{item:minor-arc}). This follows directly from \Cref{lemma:lrt-times-minor-arc,lemma:minor-arc-xi-ngo} for sufficiently large $\de > 1$. \Cref{thm:simple-circle} then implies the result.
\end{proof}

We will now apply \Cref{thm:ngo-circle} in order to estimate $c_k(t;n)$.

\begin{lemma}\label{lemma:cktn}
    Let $k,t \geq 2$ and $N > 0$. We have
    \begin{align*}
        c_k(t;n) = \frac{3^{1/4}e^{\pi\sqrt{\frac{2K}{3}n}}}{2^{3/4}K^{1/4}n^{1/4}\pi t\sqrt{k}}\lr{-\frac{K}{2}\log\lr{\pi\sqrt{\frac{K}{6}}} - \frac{K}{2}\log\lr{n} + O(n^{-1/2}\log n)}
    \end{align*}
    as $n \to \infty$, where $K \coloneqq 1 - 1/k$.
\end{lemma}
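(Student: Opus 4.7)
The plan is to apply \Cref{thm:ngo-circle}, Ngo and Rhoades' variant of the circle method for logarithmic-type series, to the generating function $\xi_k(q) \cdot \frac{K\log z}{tz} = \sum_{n \geq 0} c_k(t;n)q^n$ from \eqref{eq:c-log-type}. The factor $\frac{K\log z}{tz}$ is of logarithmic type at $z = 0$, while $\xi_k(q)$ contributes exactly the exponential singularity at $q = 1$ that the theorem anticipates; the required major- and minor-arc estimates on $\xi_k$ are already in hand from \Cref{lemma:major-arc-xi,lemma:minor-arc-xi-ngo}.

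The first step is to choose the decomposition $L^\ast \cdot \xi^\ast$. The naive choice $L^\ast = \frac{K\log z}{tz}$, $\xi^\ast = \xi_k(q)$ fails hypothesis \ref{item:hypo-2} because \Cref{lemma:major-arc-xi} gives $\xi_k(q) \sim \Phi_k(z) = \frac{1}{\sqrt{k}} e^{\pi^2 K/(6z)}\, e^{z(k-1)/24}$, which carries extra $\frac{1}{\sqrt{k}}$ and polynomial-in-$z$ factors. I will fold these into the logarithmic factor by setting
\[
L^\ast(q) \coloneqq \frac{K\log z}{tz\sqrt{k}}\, e^{z(k-1)/24}, \qquad \xi^\ast(q) \coloneqq \sqrt{k}\, e^{-z(k-1)/24}\,\xi_k(q),
\]
whose product is unchanged. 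Then $\xi^\ast(q) = e^{\pi^2 K/(6z)}(1 + O(e^{-\rho/z}))$ satisfies \ref{item:hypo-2} with $B = 1$ and $c = \pi\sqrt{K/6}$, while the Taylor series $e^{z(k-1)/24} = \sum_{m \geq 0}\frac{((k-1)/24)^m}{m!}z^m$ puts $L^\ast$ in the form $\frac{\log z}{z}\sum \alpha_m z^m$ demanded by \ref{item:hypo-1}, with leading coefficient $\alpha_0 = \frac{K}{t\sqrt{k}}$.

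Next I would verify the minor-arc hypotheses \ref{item:hypo-3} and \ref{item:hypo-4}. For the first, an elementary bound on the principal branch gives $\abs{\log z/z} \ll \eeta^{-1}\log(1/\eeta) \ll \eeta^{-2}$ as $\eeta \to 0$ with $\abs{y} \leq \pi$, and $e^{z(k-1)/24}$ is uniformly bounded on the minor arc. For the second, \Cref{lemma:minor-arc-xi-ngo} directly gives $\abs{\xi_k(q)} \ll \xi_k(\abs{q})\,e^{-\epsilon/\eeta}$, and multiplication by $\sqrt{k}\, e^{-z(k-1)/24}$ preserves this decay.

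With all four hypotheses verified, \Cref{thm:ngo-circle} produces an asymptotic of the form $c_k(t;n) = e^{2c\sqrt{n}}n^{-1/4}(c_0' + c_0\log n + O(n^{-1/2}\log n))$, since the higher $\alpha_m$ (for $m \geq 1$) only contribute to the suppressed $O(n^{-1/2}\log n)$ terms. Finally, substituting $\alpha_0 = K/(t\sqrt{k})$ and $c = \pi\sqrt{K/6}$ into the formulas for $\ell_{0,0}$ and $\ell_{0,0}'$ yields the leading coefficients $c_0$ and $c_0'$ explicitly, and $2c\sqrt{n} = \pi\sqrt{2Kn/3}$ gives the exponential prefactor. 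I expect the main obstacle to be the algebraic bookkeeping in this last step: tracking the constants $c, \sqrt{k}, \pi$, the fractional powers of $2, 3, 6$, and the signs in the definitions of $\ell_{s,m}, \ell_{s,m}'$ so that the prefactor simplifies to $\frac{3^{1/4}}{2^{3/4}K^{1/4}n^{1/4}\pi t\sqrt{k}}$ and the bracketed terms reduce to $-\frac{K}{2}\log(\pi\sqrt{K/6}) - \frac{K}{2}\log n$.
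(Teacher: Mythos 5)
Your proposal is correct and mirrors the paper's own proof essentially verbatim: the same rescaling into $L^\ast = \frac{K\log z}{tz\sqrt{k}}e^{z(k-1)/24}$ and $\xi^\ast = \sqrt{k}\,e^{-z(k-1)/24}\xi_k(q)$, the same verification of hypotheses \ref{item:hypo-1}--\ref{item:hypo-4} via \Cref{lemma:major-arc-xi,lemma:minor-arc-xi-ngo}, and the same application of \Cref{thm:ngo-circle} with $N=1$. The only difference is that you are slightly more explicit about the Taylor coefficients $\alpha_m$ of $e^{z(k-1)/24}$ and about the closing evaluation of $\ell_{0,0}$, $\ell_{0,0}'$, which the paper leaves implicit.
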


\begin{proof}
    As before, let $\eeta > 0$, $z = \eeta + iy$, $q = e^{-z}$, and define the major arc by $0 \leq \abs{y} \leq \Delta\eeta$ and the minor arc by $\Delta\eeta \leq \abs{y} \leq \pi$. For convenience, we rescale $\xi_k(q)\frac{K\log z}{tz}$ to $\xi_k^\ast(q)L_k^\ast(t;q)$, which are defined by
    \begin{align*}
        \xi_k^\ast(q) &\coloneqq \sqrt{k}\exp\left(-\frac{z(k-1)}{24}\right)\xi_k(q)
    \end{align*}
    and
    \begin{align*}
        L_k^\ast(t;q) &\coloneqq \frac{K\log z}{tz\sqrt{k}}\exp\left(\frac{z(k-1)}{24}\right).
    \end{align*}
    We see that $L_k^\ast(t;q)$ trivially satisfies hypothesis \ref{item:hypo-1} with $B = 1$ for any $N > 0$. Furthermore, $L_k^\ast(t;q)$ satisfies hypothesis \ref{item:hypo-3} on the minor arc, as for any $\Delta$, if $z = \eeta + iy \to 0$ within the region $\Delta\eeta \leq \abs{y} \leq \pi$, we have that
    \begin{align*}
        \abs{\frac{\log z}{z}} \ll_\Delta \frac{1}{\eeta^2}.
    \end{align*}
    We now verify that $\xi_k^\ast(q)$ satisfies hypotheses \ref{item:hypo-2} and \ref{item:hypo-4}. On the major arc, \Cref{lemma:major-arc-xi} implies that
    \begin{align*}
        \xi_k^\ast(q) = e^{\frac{\pi^2K}{6z}}\left(1 + O\left(e^{-\frac{4\pi^2}{kz}}\right)\right),
    \end{align*}
    and so $\xi_k^\ast$ satisfies \ref{item:hypo-2} with $\beta = 0$, $c^2 = \frac{\pi^2K}{6}$, and $\gamma = \frac{4\pi^2}{k}$. Finally, \Cref{lemma:minor-arc-xi-ngo} immediately implies, on the minor arc, that
    \[
        \abs{\xi_k^\ast(q)} \ll_{k,\Delta} \xi_k^\ast(\abs{q})e^{-\frac{\epsilon}{\eeta}}
    \]
    for some $\epsilon > 0$ when $\Delta$ is chosen sufficiently large. Thus, we may apply \Cref{thm:ngo-circle} which immediately implies the result if we choose $N = 1$.
\end{proof}

Combining \Cref{lemma:akrtn,lemma:cktn} yields \Cref{thm:indiv-asym}.

\section{Proof of \texorpdfstring{\Cref{thm:k-indiv-biases}}{Theorem 1.5}}\label{sec:biases}

\subsection{Estimates for Differences of the Digamma Function}

In this subsection, we provide useful estimates for differences $\psi(a) - \psi(b)$ of the digamma function. Throughout, we only consider the digamma function on $(0, \infty)$. Define for $a > 0$ the function
\[
    \psi_a(x) \coloneqq \psi(x+a) - \psi(x).
\]
Because $\psi$ is increasing, $\psi_a$ is always positive. First note that we may successively apply the recurrence \eqref{eq:psi-recurrence} to see, for any integer $N \geq 1$, that
\begin{align}
    \psi_a(x) = \psi_a(x+N) + \sum_{n=0}^{N-1} \frac{a}{(x+n)(x+a+n)}. \label{eq:n-shift}
\end{align}
In order to use this equation to obtain an infinite series representation of $\psi_a(x)$, we require the following lemma.
\begin{lemma}\label{lemma:diffgamma-dec}
For any $a>0$, $\psi_a$ is decreasing and
\[
\lim_{x\to \infty} \psi_a(x) = 0.
\]
\end{lemma}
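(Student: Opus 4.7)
The plan is to prove both statements starting from the integral representation \eqref{eq:integral-psi} of the digamma function. Subtracting this representation at $x+a$ and at $x$, the first term cancels and one obtains
\[
    \psi_a(x) = \psi(x+a) - \psi(x) = \int_0^\infty \frac{e^{-xu}(1-e^{-au})}{1-e^{-u}}\,du.
\]
The auxiliary function $g_a(u) \coloneqq \frac{1-e^{-au}}{1-e^{-u}}$ is continuous and positive on $(0,\infty)$; since $g_a(u) \to a$ as $u \to 0^+$ and $g_a(u) \to 1$ as $u \to \infty$, it is bounded on $(0,\infty)$ by some $M = M(a) > 0$.

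For the limit, I would simply bound
\[
    0 < \psi_a(x) \leq M\int_0^\infty e^{-xu}\,du = \frac{M}{x} \xrightarrow[x \to \infty]{} 0,
\]
using positivity (since $\psi$ is increasing, as noted at the end of \Cref{subs:special function}).

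For monotonicity, I would differentiate under the integral, justified by dominated convergence with the integrable majorant $u \mapsto Mu\,e^{-(x_0/2)u}$ on a neighborhood $x \geq x_0 > 0$, to get
\[
    \psi_a'(x) = -\int_0^\infty \frac{u\,e^{-xu}(1-e^{-au})}{1-e^{-u}}\,du < 0,
\]
so that $\psi_a$ is in fact strictly decreasing on $(0,\infty)$. No step appears to be a genuine obstacle; the only delicate point is verifying the hypotheses for differentiation under the integral sign, and this is routine.

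As an alternative that stays strictly within the identities already quoted in the paper, one can first establish the limit directly from the sandwich inequalities \eqref{eq:log-digamma}, which yield $\psi_a(x) = \log(1 + a/x) + O(1/x) \to 0$, and then pass $N \to \infty$ in the recurrence \eqref{eq:n-shift} to obtain the series
\[
    \psi_a(x) = \sum_{n=0}^\infty \frac{a}{(x+n)(x+a+n)},
\]
from which monotonicity in $x$ is immediate term-by-term. I would likely present the argument in this second form since it uses only tools already collected in \Cref{subs:special function} and leads directly to the series representation, which is useful for the subsequent estimates on $\psi_{k,t}(r) - \psi_{k,t}(s)$ in the proof of \Cref{thm:k-indiv-biases}.
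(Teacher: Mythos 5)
Both of your arguments are correct, and your preferred (second) route is both valid and cleaner than the paper's. The paper proves the limit exactly as you do, from the sandwich inequalities \eqref{eq:log-digamma}. But for monotonicity the paper works directly with the finite recurrence \eqref{eq:n-shift}: it compares the partial sums $\sum_{n=0}^{N-1}\frac{a}{(x+n)(x+a+n)}$ and $\sum_{n=0}^{N-1}\frac{a}{(y+n)(y+a+n)}$ term by term, observes that the first exceeds the second by at least the $n=0$ gap, and then chooses $N$ large enough that the ``error'' $\abs{\psi_a(x+N)-\psi_a(y+N)}$ is smaller than that gap. Only afterwards does it invoke the lemma to let $N\to\infty$ in \eqref{eq:n-shift} and obtain the series \eqref{eq:diff-expand}. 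You reorder this: since the limit alone suffices to pass $N\to\infty$ in \eqref{eq:n-shift} and get the series $\psi_a(x)=\sum_{n\geq 0}\frac{a}{(x+n)(x+a+n)}$, you establish the series first and then read off monotonicity term by term. That is a genuine simplification — it removes the $\varepsilon$-style tail comparison entirely. Your first approach, via the integral representation $\psi_a(x)=\int_0^\infty \frac{e^{-xu}(1-e^{-au})}{1-e^{-u}}\,du$ and differentiation under the integral sign, is also correct and entirely independent of \eqref{eq:n-shift}; it is arguably the most direct route to strict monotonicity, though it invokes dominated convergence, a tool the paper otherwise avoids. Your instinct to present the second version is sound, since it stays within \Cref{subs:special function} and produces \eqref{eq:diff-expand} en route, which is needed later.
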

\begin{proof}
Using the logarithmic bounds (see \eqref{eq:log-digamma}) for $\psi(x)$, we have
\[
0 < \psi_a(x) < \log\lr{\frac{x+a}{x}} - \frac1{2(x+a)} + \frac{1}{x} < \log\lr{1+\frac ax} + \frac1x,
\]
which implies the desired limit. Let $y > x > 0$. We must prove that $\psi_a(y) < \psi_a(x)$. Write this inequality using \eqref{eq:n-shift} as
\[
\psi_a(x+N) + \sum_{n=0}^{N-1} \frac{a}{(x+n)(x+a+n)} >
\psi_a(y+N) + \sum_{n=0}^{N-1} \frac{a}{(y+n)(y+a+n)}.
\]
We also have
\[
\frac{1}{(x+i)(x+a+i)} > \frac{1}{(y+i)(y+a+i)}
\]
since $y>x>0$.
As $\psi_a$ is arbitrarily close to 0 for large inputs, we choose $N$ large enough to make
\[
\abs{\psi_a(y+N) - \psi_a(x+N)} \leq \abs{\psi_a(x+N)} + \abs{\psi_a(y+N)} <  \frac 1{x(x+a)} - \frac1{y(y+a)},
\]
which proves that $\psi_a$ is decreasing.
\end{proof}

\Cref{lemma:diffgamma-dec} combined with \cref{eq:n-shift} gives us the expansion
\begin{equation}
    \psi_a(x) = a\sum_{m=0}^\infty \frac{1}{(x+m)(x+a+m)}.
    \label{eq:diff-expand}
\end{equation}
We will later require the following estimates for the $\psi$ difference function.
\begin{lemma}
\label{lemma:diffgamma-est}
Let $0 < b < a \leq 1$, then we have the following inequalities:
\[
(a-b)\lr{\frac{1}{ab} + \frac{1}{b+1}} < \psi(a) - \psi(b) < (a-b)\lr{\frac1{ab} + \frac{\pi^2}6}.
\]
Further, if $a, b > \frac12$, then we have
\[
\psi(a) - \psi(b) < (a - b)\lr{\frac1{ab} + \frac{\pi^2}{6} - \frac59}.
\]
\begin{proof}
Using the expansion \eqref{eq:diff-expand} and splitting off the first term, we see that
\begin{align*}
    \psi(a) - \psi(b) &= (a-b)\sum_{m = 0}^\infty \frac{1}{(b+m)(a + m)} \geq (a-b)\left(\frac{1}{ab} + \sum_{m = 1}^\infty \frac{1}{(b+m)(1+m)}\right)
\end{align*}
because $a \leq 1$. Similarly, because $b > 0$ we have that
\begin{align*}
    \psi_(a) - \psi(b) &> (a-b)\left(\frac{1}{ab} + \sum_{m = 1}^\infty \frac{1}{(m+1)(mb + m)}\right) =(a-b)\lr{\frac1{ab} + \frac1{b+1}},
\end{align*}
where the last equality is a simple telescoping sum.
For the other direction we may use that $a,b > 0$ to see that
\begin{align*}
    \psi(a) - \psi(b) &= (a-b)\sum_{m = 0}^\infty \frac{1}{(b+m)(a + m)} \leq (a-b)\left(\frac{1}{ab} + \sum_{m = 1}^\infty \frac{1}{m^2}\right) = (a-b)\left(\frac{1}{ab} + \frac{\pi^2}{6}\right).
\end{align*}
For the case $a, b > \frac12$, if we use the bound
\[
\frac{1}{(a+1)(b+1)} < \frac49,
\]
instead of bounding by 1, we get the result.
\end{proof}
\end{lemma}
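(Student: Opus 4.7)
The starting point will be the series expansion \eqref{eq:diff-expand}: applying it with $x = b$ and shift $a - b$ in place of $a$, I immediately obtain
\[
\psi(a) - \psi(b) = (a-b)\sum_{m=0}^\infty \frac{1}{(b+m)(a+m)}.
\]
From here both directions reduce to elementary comparisons term-by-term, after peeling off the $m = 0$ summand, which is exactly the $\frac{1}{ab}$ that appears in every bound of the lemma statement.

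For the lower bound, I will use $a \le 1$ to replace $a + m$ by the larger quantity $1 + m$ in the denominator (for $m \ge 1$), and then use $b > 0$ and $m \ge 1$ to further replace $b + m$ by $m(b+1) \ge b + m$. This gives
\[
\sum_{m=1}^\infty \frac{1}{(b+m)(a+m)} \;\ge\; \sum_{m=1}^\infty \frac{1}{(m+1)\,m(b+1)} \;=\; \frac{1}{b+1},
\]
where the final equality is a standard telescoping of $\frac{1}{m(m+1)}$. Adding the $m = 0$ contribution yields the claimed lower bound. For the (first) upper bound I will just use $a, b > 0$, so that $(b+m)(a+m) \ge m^2$ for $m \ge 1$, and recognize $\sum_{m\ge 1} m^{-2} = \pi^2/6$.

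For the refined upper bound when $a, b > \tfrac12$, the plan is to peel off one more term and treat it more carefully: the $m = 1$ summand equals $\frac{1}{(a+1)(b+1)}$, and since $a+1, b+1 > \tfrac32$ this is strictly less than $\tfrac49$. Bounding the remaining tail by $\sum_{m\ge 2} m^{-2} = \frac{\pi^2}{6} - 1$ gives a total correction of $\frac{\pi^2}{6} - 1 + \frac{4}{9} = \frac{\pi^2}{6} - \frac{5}{9}$, which is the stated improvement.

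There is no real obstacle here: once \eqref{eq:diff-expand} is in hand, the whole lemma is a sequence of monotone comparisons and two small telescoping/$\zeta(2)$ sums. The only step requiring a moment's thought is the telescoping replacement $(b+m) \le m(b+1)$ used in the lower bound, which is tight up to an $O(1/m)$ factor and exactly what makes the resulting series collapse to $\frac{1}{b+1}$.
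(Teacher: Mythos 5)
Your proposal is correct and follows essentially the same argument as the paper: the same series expansion \eqref{eq:diff-expand}, the same peeling of the $m=0$ term, the same pair of denominator replacements $a+m \le 1+m$ and $b+m \le m(b+1)$ leading to the telescoping sum, the same $\sum_{m\ge 1} m^{-2}$ bound for the first upper estimate, and the same refined treatment of the $m=1$ term via $\frac{1}{(a+1)(b+1)} < \frac{4}{9}$. (One small aside in your write-up is imprecise: the ratio $m(b+1)/(b+m)$ tends to $b+1$, not to $1$, so the replacement is not tight up to $O(1/m)$; this does not affect the proof.)
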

When one of the values is $1$, we have stronger bounds
\begin{lemma}\label{lemma:diff-gamma-1}
Let $0 < a < 1$, then we have that
\begin{align*}
    (1-a)\lr{\frac1{a} + \frac{\pi^2}{6} - 1} < \psi(1) - \psi(a) &< (1-a)\lr{\frac1{a} + 1}.
\end{align*}
\end{lemma}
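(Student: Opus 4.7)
The plan is to apply the series expansion \eqref{eq:diff-expand} directly, since the quantity $\psi(1) - \psi(a) = \psi_{1-a}(a)$ fits exactly into that framework. Writing it out, we have
\[
    \psi(1) - \psi(a) = (1-a)\sum_{m=0}^\infty \frac{1}{(a+m)(1+m)}.
\]
Isolating the $m=0$ term gives a leading $\frac{1-a}{a}$, and the remaining task is to bound $S \coloneqq \sum_{m=1}^\infty \frac{1}{(a+m)(1+m)}$ both above and below.

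For the lower bound, I would use that $a < 1$ implies $a + m < 1 + m$, so $S > \sum_{m=1}^\infty \frac{1}{(1+m)^2} = \zeta(2) - 1 = \frac{\pi^2}{6} - 1$. For the upper bound, I would use that $a > 0$ implies $a + m > m$, so $S < \sum_{m=1}^\infty \frac{1}{m(m+1)}$, which telescopes to $1$. Combining these with the leading term $\frac{1-a}{a}$ produces exactly the stated inequalities.

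The proof should be short and routine once the series expansion is invoked; there is no substantive obstacle beyond the two elementary comparisons described above. The only subtlety is keeping the factor of $(1-a)$ outside the bounds in the final form matching the statement of the lemma.
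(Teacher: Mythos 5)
Your proposal is correct and follows essentially the same route as the paper: invoke the series expansion \eqref{eq:diff-expand}, peel off the $m=0$ term to get $\frac{1-a}{a}$, and bound the tail from below by $\sum_{m\ge 1}\frac{1}{(m+1)^2}=\frac{\pi^2}{6}-1$ and from above by the telescoping sum $\sum_{m\ge 1}\frac{1}{m(m+1)}=1$. If anything, you are slightly more careful than the paper in noting that the comparisons are strict (since $0<a<1$), which is what the stated strict inequalities actually require.
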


\begin{proof}
As in the proof of \Cref{lemma:diffgamma-est}, write
\begin{align*}
    \psi(1) - \psi(a) &= (1-a)\lr{\frac1a + \sum_{m = 1}^\infty \frac{1}{(m+1)(a+m)}}.
\end{align*}
We then see that because $0 < a < 1$
\begin{align*}
    \frac{\pi^2}{6} - 1 = \sum_{m=1}^\infty \frac{1}{(m+1)^2} \leq \sum_{m = 1}^\infty \frac{1}{(m+1)(a+m)} \leq \sum_{m = 1}^\infty \frac{1}{m(m+1)} = 1. 
\end{align*}
This yields the result.
\end{proof}

\subsection{Proof of \ref{item:r+k} in \Cref{thm:k-indiv-biases}}

Let $k,t \geq 2$ be coprime and let $1 \leq r \leq t - k$. For the proof of \ref{item:r+k}, we may instead show the result when $mk \leq t < (m+1)k$ for every nonnegative integer $m$.
Since $k, t$ are coprime, we have $mk < t$.
Note that when $m = 0$, there is nothing to check, as $t - k \leq 0$, and there is no $1 \leq r \leq t - k$. Thus suppose that $m \geq 1$. As before, we will use \Cref{cor:indiv-diff} and instead show $\psi_{k,t}(r) > \psi_{k,t}(r + k)$. Note that because $r \not\equiv 0 \Mod t$, we have that $\bar{r} \not\equiv 0 \Mod t$, and this implies that $\overline{r + k} = \bar{r} + 1$ because $\bar{r} < t$. 

We first check the lower $\overline{r}$ values, namely when $1 \leq \overline{r} < m$. In this case, we require the following lemma.

\begin{lemma}\label{lemma:r+k bound}
Let $x, a$ be positive real numbers and $k \geq 2$ an integer. Then we have
\[
    k\psi_{ak}(xk) > \psi_a(x).
\]
\end{lemma}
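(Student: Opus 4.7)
The plan is to prove \Cref{lemma:r+k bound} by expressing both sides using the series representation \eqref{eq:diff-expand} and then observing that the right-hand side appears as a proper positive subseries of the left-hand side.

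First I would write
\[
    k\psi_{ak}(xk) \;=\; k \cdot (ak) \sum_{m=0}^\infty \frac{1}{(xk+m)(xk+ak+m)} \;=\; ak^2 \sum_{m=0}^\infty \frac{1}{(xk+m)(xk+ak+m)},
\]
using \eqref{eq:diff-expand} with $x \mapsto xk$ and $a \mapsto ak$. Next I would rescale the expansion of $\psi_a(x)$ by factoring $k$ out of each denominator:
\[
    \psi_a(x) \;=\; a\sum_{m=0}^\infty \frac{1}{(x+m)(x+a+m)} \;=\; ak^2\sum_{m=0}^\infty \frac{1}{(xk+mk)(xk+ak+mk)}.
\]
With the common prefactor $ak^2$ stripped off, the required inequality reduces to
\[
    \sum_{m=0}^\infty \frac{1}{(xk+m)(xk+ak+m)} \;>\; \sum_{m=0}^\infty \frac{1}{(xk+mk)(xk+ak+mk)}.
\]

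The right-hand side is precisely the subseries of the left-hand side obtained by keeping only the indices $m \in \{0, k, 2k, \ldots\}$. Every term in the left-hand series is strictly positive, and because $k \geq 2$ there are infinitely many indices (for instance $m = 1$) present on the left but absent on the right. This yields the strict inequality, completing the proof.

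The argument is essentially a bookkeeping check, so there is no substantive obstacle; the only subtlety is making sure the change of variable inside the series of $\psi_a(x)$ is done correctly so that the two sides can be compared term-by-term after factoring out the common $ak^2$.
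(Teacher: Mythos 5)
Your proof is correct and takes essentially the same approach as the paper: both expand each side via \eqref{eq:diff-expand} and exploit positivity of the summands. The paper phrases the final step as a term-by-term comparison, namely $\frac{k^2}{(xk+n)(xk+ak+n)} \geq \frac{1}{(x+n)(x+a+n)}$ with equality only at $n=0$, whereas you rescale and observe that $\psi_a(x)$ is a proper positive subseries of $k\psi_{ak}(xk)$; these are the same observation packaged slightly differently, and your subseries framing is, if anything, a touch cleaner.
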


\begin{proof}
We expand both sides by \eqref{eq:diff-expand}, which yields
\begin{align*}
k\psi_{ak}(xk)&=
ak^2\sum_{n=0}^\infty\frac{1}{(xk+n)(xk+ak+n)} \quad \; \text{and} \quad \; \psi_a(x)=
a\sum_{n=0}^\infty\frac{1}{(x+n)(x+a+n)}.
\end{align*}
Comparing term by term, we see that
\begin{align*}
\frac{k^2}{(xk+n)(xk+ak+n)} \geq \frac{1}{(x+n)(x+a+n)} &\iff k^2(x+n)(x+a+n) \geq (xk+n)(xk+ak+n).
\end{align*}
The right hand side holds if and only if
\[
    (xk+nk)(xk+ak+nk) \geq (xk+n)(xk+ak+n) .
\]
However, this inequality holds trivially because everything is positive. In fact, equality only holds when $n=0$. Thus, the inequality is strict for at least one term, implying the result.
\end{proof}
Let $1 \leq \bar{r} \leq m$. Thus $r \equiv k\bar{r} \Mod t$, and $1 \leq k\bar{r} \leq km \leq t$, so $r = k\bar{r}$. Applying \Cref{lemma:r+k bound} with $a = \frac{1}{t}$ and $x = \frac{\bar{r}}{t}$ then proves \ref{item:r+k}, noting that $\overline{r+k} = \bar{r} + 1$ because $m < t$. For $\overline{r} \geq m + 1$, it suffices to show the following lemma, which is a rewritten form of \ref{item:r+k} appearing in \Cref{thm:k-indiv-biases}.
\begin{lemma}
Let $k,t$ be coprime and suppose that $mk \leq t < (m+1)k$ for some $m \geq 2$. If $\bar{r} \geq m + 1$, then we have that
\[
    k\psi_{k/t}\lr{\frac{r}{t}}> \psi_{1/t}\lr{\frac{\overline{r}}{t}}.
\]
\end{lemma}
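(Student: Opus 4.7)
The plan is to expand both sides of the target inequality using the series representation $\psi_a(x) = a\sum_{n \geq 0} \frac{1}{(x+n)(x+a+n)}$ from \eqref{eq:diff-expand}, clear denominators to work with integer quantities, and then realize the right-hand side as a strict sub-sum of the left-hand side. A short computation unrolls the definitions to
\[
    k\psi_{k/t}\!\left(\frac{r}{t}\right) = k^2 t \sum_{n \geq 0} \frac{1}{(r + nt)(r + k + nt)}, \qquad \psi_{1/t}\!\left(\frac{\overline{r}}{t}\right) = t \sum_{\ell \geq 0} \frac{1}{(\overline{r} + \ell t)(\overline{r} + 1 + \ell t)}.
\]

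The next step is to align the two sums using the congruence $k\overline{r} \equiv r \pmod{t}$. Write $k\overline{r} = r + jt$ for a nonnegative integer $j$. The hypotheses $\overline{r} \geq m+1$ and $t < (m+1)k$ give $k\overline{r} > t$, so $j \geq 1$, while $\overline{r} \leq t-1$ forces $j \leq k-1$. The elementary identity $k(\overline{r} + \ell t) = r + (j + k\ell)t$ then provides the key factoring
\[
    (\overline{r} + \ell t)(\overline{r} + 1 + \ell t) = \frac{1}{k^2}\bigl(r + (j + k\ell)t\bigr)\bigl(r + k + (j + k\ell)t\bigr),
\]
which I substitute into the expression above to rewrite $\psi_{1/t}(\overline{r}/t) = k^2 t \sum_{\ell \geq 0} \frac{1}{(r + (j + k\ell)t)(r + k + (j + k\ell)t)}$. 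As $\ell$ ranges over $\mathbb{Z}_{\geq 0}$, the index $n = j + k\ell$ traces out the arithmetic progression $\{j, j+k, j+2k, \ldots\}$, which is a proper subset of $\mathbb{Z}_{\geq 0}$ because $j \geq 1$.

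Subtracting the two aligned series then yields
\[
    k\psi_{k/t}\!\left(\frac{r}{t}\right) - \psi_{1/t}\!\left(\frac{\overline{r}}{t}\right) = k^2 t \sum_{n \in S} \frac{1}{(r + nt)(r + k + nt)}, \qquad S = \mathbb{Z}_{\geq 0} \setminus \{j, j+k, j+2k, \ldots\},
\]
and since $j \geq 1$ we have $0 \in S$, so the difference is strictly positive, in fact bounded below by $\tfrac{k^2 t}{r(r+k)}$. The main obstacle is conceptual rather than technical: one has to notice that the apparently unrelated sum defining $\psi_{1/t}(\overline{r}/t)$ secretly indexes a subset of the sum defining $k\psi_{k/t}(r/t)$, via the single arithmetic relation $k\overline{r} = r + jt$. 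Once this bijection is spotted the inequality comes essentially for free, with large surplus from the indices $n \notin \{j, j+k, j+2k, \ldots\}$, and none of the delicate digamma difference estimates of \Cref{lemma:diffgamma-est} or \Cref{lemma:diff-gamma-1} are needed for this particular case.
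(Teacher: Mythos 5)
Your proof is correct, and it takes a genuinely different and arguably cleaner route than the paper. The paper first reduces to the extremal case $r = t-k$, $\bar r = m+1$ via the monotonicity of $\psi_a$ (\Cref{lemma:diffgamma-dec}), then expands both sides with \eqref{eq:diff-expand} and compares term by term, which leads to a polynomial inequality that needs some case analysis (including a numerical check of $(k,m,t)=(2,1,3)$ before the general argument). You instead keep $r$ and $\bar r$ general and use the arithmetic link $k\bar r = r + jt$ to recognize $\psi_{1/t}(\bar r/t)$ as literally a sub-sum of $k\psi_{k/t}(r/t)$ after the rescaling
\[
\psi_{1/t}\!\left(\tfrac{\bar r}{t}\right) = k^2 t\sum_{\ell\geq 0}\frac{1}{\left(r+(j+k\ell)t\right)\left(r+k+(j+k\ell)t\right)},
\]
so the inequality follows at once from the positivity of the dropped terms, together with an explicit lower bound $k^2 t / \bigl(r(r+k)\bigr)$ on the gap. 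The only detail worth making explicit is that $t \mid (k\bar r - r)$ because $k\bar r \equiv r \pmod t$, and that $k\bar r \geq (m+1)k > t \geq r$ forces the multiple $j$ to be at least $1$, so $n=0$ is missed. Beyond being shorter, your argument has the bonus that the arithmetic-progression picture also subsumes the case $j=0$ (i.e., $k \mid r$, which the paper handles separately in \Cref{lemma:r+k bound}), since for $k \geq 2$ the progression $\{j, j+k, j+2k, \ldots\}$ always omits some index in $\{0,1,\ldots,k-1\}$.
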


\begin{proof}
We consider $(k, m, t) = (2, 1, 3)$ separately. Indeed, this can easily be checked numerically. Thus, the $(k, m) = (2, 1)$ case is covered.
    Here we may replace $\overline{r}$ by the lowest value not covered by the first case and $r$ by $t-k$ by \Cref{lemma:diffgamma-dec}. Thus, it suffices to prove
    \[
        k\psi_{k/t}\lr{1 -\frac kt} > \psi_{1/t}\lr{\frac{m+1}{t}}.
    \]
    Using \eqref{eq:diff-expand}, we may write
    \begin{align*}
        k\psi_{k/t}\lr{1-\frac kt} &= k^2 \sum_{j=1}^\infty\frac{1}{j(jt-k)}, && \psi_{1/t}\lr{\frac{m+1}{t}} =  \sum_{j=1}^\infty \frac{t}{(jt-t + m+1)(jt - t + m+2)}.
    \end{align*}
    If we look at each term, it suffices to show that
    \[
    \frac{k^2}{j(jt-k)} > \frac{t}{(jt-t + m+1)(jt - t + m+2)}
    \]
    or, equivalently,
    \[
        k^2 > \frac{jt}{jt - t + m+2} \cdot \frac{jt - k}{jt-t + m+1}.
    \]
    Because $k \geq 2$, we have
    \[
    jt(jt-k) < (jt-1)(jt-k+1),
    \]
    which upon expanding and cancelling terms we are left with $k > 1$. Thus we need to prove that,
    \[
    k^2 > \frac{jt-1}{jt - t + m+2} \cdot \frac{jt - k+1}{jt-t + m+1}.
    \]
    We now see that if $m \geq 2$, then $km \geq m+2$, which implies $t \geq km+1 \geq m+3$. If $m=1$, then $k \geq 3$ would imply $t \geq km+1 \geq m+3$. Thus we have that
    \[
        \frac{jt-1}{jt-t + m+2}
    \]
    is maximized when $j = 1$. Similarly, we have that the second term
    \[
    \frac{jt - k+1}{jt - t + m+1},
    \]
    is maximized when $j=1$ if the numerator is larger than the denominator. That is we must have $k + m \leq t$. But notice that we have $t \geq km+1 = (k-1)(m-1) + k + m \geq k + m$ because $m\geq 1$. Thus, we only need to check the case when $j=1$, which may be written as
    \[
     k^2 > \frac{t(t - k)}{(m+1)(m+2)}.
    \]
    We may then cross-multiply to write this inequality as
    \[
    (m+1)(m+2) > \frac tk \lr{\frac tk - 1},
    \]
    which holds because $t < (m+1)k$.
\end{proof}

As before, the above shows \ref{item:r+k} due to \Cref{cor:indiv-diff} and the fact that $\overline{r + k} = \bar{r} + 1$.

\subsection{Proof of \ref{item:fixing} and \ref{item:t2} in \Cref{thm:k-indiv-biases}}

We now prove \ref{item:fixing}, and in the process prove \ref{item:t2}. Let $1 \leq r \leq y \leq t$, $r < s \leq t$, and $k,t \geq 2$ be coprime. In light of \Cref{cor:indiv-diff}, it suffices to show that $\psi_{k,t}(r) > \psi_{k,t}(s)$. In other words, we must show that
\begin{align*}
    \psi\left(\frac{s}{t}\right) - \psi\left(\frac{r}{t}\right) > \frac{1}{k}\left(\psi\left(\frac{\bar{s}}{t}\right) - \psi\left(\frac{\bar{r}}{t}\right)\right),
\end{align*}
where $1 \leq \bar{r} \leq t$ is the representative of $rk^{-1}$ mod $t$, and similarly for $\bar{s}$. The left hand side is minimized when $s = y+1, r = y$ whereas the right hand side is maximized when $\bar{s} = 1, \bar{r} = \frac{1}{t}$. Thus we may instead show that
\begin{align*}
    \psi\left(\frac{y+1}{t}\right) - \psi\left(\frac{y}{t}\right) > \frac{1}{k}\left(\psi\left(1\right) - \psi\left(\frac{1}{t}\right)\right).
\end{align*}
\Cref{lemma:diff-gamma-1} implies that
\[
    \psi\left(1\right) - \psi\left(\frac{1}{t}\right) < \frac{t^2 - 1}{t}.
\]
Likewise, \Cref{lemma:diffgamma-est} implies that
\[
    \psi\left(\frac{y+1}{t}\right) - \psi\left(\frac{y}{t}\right) > \frac{1}{t}\left(\frac{t^2}{(y+1)y} + \frac t{y + t}\right) > \frac{1}{t}
    \lr{\frac{2t^2 + y(y+1)}{2y(y+1)}} >
    \frac 1t\lr{\frac{t^2-1}{y(y+1)}}.
\]
Therefore, it suffices to take
\begin{align*}
    k \geq y(y+1),
\end{align*}
proving \ref{item:fixing}.

Now we prove \ref{item:t2}. When $y = t-1$, we may use \Cref{lemma:diff-gamma-1} instead of \Cref{lemma:diffgamma-est} to see that
\begin{align*}
    \psi\left(1\right) - \psi\left(\frac{t-1}{t}\right) > \frac{1}{t}\left(\frac{t}{(t-1)}  + \frac{\pi^2}{6} - 1\right) > \frac{\pi^2}{6t}.
\end{align*}
Thus it suffices to take $k \geq \frac{6(t^2 - 1)}{\pi^2}$, proving \ref{item:t2}.

\subsection{Proof of \ref{item:high-unnatural} in \Cref{thm:k-indiv-biases}}

Let $k = mt - 1$, then we wish to show that $\psi_{k,t}(t) > \psi_{k,t}(t-1)$ for sufficiently small $m$. Note that for any $1 \leq r \leq t-1$, since $k\equiv -1 \Mod t$, we have $\bar{r} = t - r$, and we have $\bar{t} = t$. Thus, we may rewrite this inequality as
\begin{align*}
    \frac{1}{k}\left(\psi\left(1\right) - \psi\left(\frac{1}{t}\right)\right) > \psi(1)-\psi\left(\frac{t-1}{t}\right).
\end{align*}
Applying \Cref{lemma:diff-gamma-1} and \cref{eq:diff-expand}, it suffices to show that
\begin{align*}
    \frac{t-1}{tk}\left(t + \frac{\pi^2}{6} - 1\right) > \frac{1}{t}\sum_{n = 0}^\infty \frac{1}{(\frac{t-1}{t} + n)(n + 1)}.
\end{align*}
We then see that, since $\frac{\pi^2}{6} = \sum_{n = 1}^\infty \frac{1}{n^2}$,
\begin{align*}
    -\frac{\pi^2}{6} + \sum_{n = 0}^\infty \frac{1}{(\frac{t-1}{t} + n)(n + 1)} &= \frac{1}{t}\sum_{n=0}^\infty \frac{1}{(n+1)^2\left(\frac{t-1}{t} + n\right)} \leq \frac{1}{t-1} + \frac{1}{t}\sum_{n=1}^\infty \frac{1}{(n+1)^2 n} \\
    &= \frac{1}{t-1} + \frac{2}{t} - \frac{\pi^2}{6t} \leq \frac{5}{2t}.
\end{align*}
Thus, it suffices for $k$ to satisfy
\begin{align*}
    \frac{t-1}{tk}\left(t + \frac{\pi^2}{6} - 1\right) > \frac{1}{t}\left(\frac{\pi^2}{6} + \frac{5}{2t}\right).
\end{align*}
Rearranging, we have that $\psi_{k,t}(t) > \psi_{k,t}(t-1)$ whenever
\begin{align*}
    k < (t-1)\left(t + \frac{\pi^2}{6} - 1\right)\left(\frac{\pi^2}{6} + \frac{5}{2t}\right)^{-1}.
\end{align*}
Estimating then yields the result.

\subsection{The Proof of \ref{item:supralinear} in \Cref{thm:k-indiv-biases}}
\label{subs:linear}

Now we prove \ref{item:supralinear}. To do so, we show that for $k \in (t/2, t)$, the least common residue is $k \Mod t$; in other words, for $1 \leq r \leq t$ and $r\neq k$, then $r\succ_{k, t} k$.

Assume that $k \in \lr{\frac{m-1}{m}t, \frac{m}{m+1}t}$ for $2 \leq m < t$. Doing so will not exclude any allowed $\frac t2 < k < t$ because $k = \frac{m-1}m t$ for some $m$ would violate coprimality with $t$. We divide into 2 cases: $\overline{r} \leq m$ and $\overline{r} \geq m+1$. Note that since $r \neq k$ and $k < t$, $\bar{r} \neq 1$.

Suppose that $2 \leq \rbar \leq m$, then we have $1 - \frac1\rbar \leq 1 - \frac1m < \frac kt < 1$, which implies
\[
0 < \rbar k - (\rbar - 1)t < t.
\]
Thus, $r = \rbar k - (\rbar - 1)t$. Moreover, we have $(\bar{r}-1)k < (\bar{r}-1)t$, and so
\[
r=\rbar k - (\rbar - 1)t < k.
\]
Thus, we obtain that
\[
\psi_{k, t}(r) = -\psi\lr{\frac rt} + \frac1k \psi\lr{\frac\rbar t} > -\psi\lr{\frac kt} + \frac1k\psi\lr{\frac1t} = \psi_{k,t}(k)
\]
because $\psi$ is increasing.

Now assume $\rbar > m$.
We need to prove that
\[
\psi_{k, t}(r) > \psi_{k, t}(k),
\]
which is equivalent to
\[
\psi\lr{\frac \rbar t} - \psi\lr{\frac 1t} > k\lr{\psi\lr{\frac rt} - \psi\lr{\frac kt}}.
\]
Assume for the moment that $m > 2$, and we will later handle the case of $m = 2$ separately. Because $\rbar \geq m+1, r \leq t$ and $\psi$ is increasing, it suffices to show the  inequality
\[
\psi\lr{\frac{m+1}t} - \psi\lr{\frac1t} > k\lr{\psi(1) - \psi\lr{\frac kt}}.
\]
Using \Cref{lemma:diffgamma-est}, the left hand side is bounded by
\begin{align*}
    \psi\lr{\frac {m+1}t} - \psi\lr{\frac1t} &> \frac mt\lr{\frac{t^2}{m+1} + \frac{t}{t + 1}} = \frac{mt}{m+1} + \frac{m}{t+1} > \frac{mt}{m+1}.
\end{align*}
Using \Cref{lemma:diff-gamma-1}, the right hand side is bounded by
\[
k\lr{\psi(1) - \psi\lr{\frac kt}} < k\frac{t-k}t\lr{\frac tk + 1} = \frac{t^2 - k^2}{t} < t\lr{1 - \frac{(m-1)^2}{m^2}}.
\]
Thus, we need to prove
\[
\frac{mt}{m+1} > t\lr{1 - \frac{(m-1)^2}{m^2}},
\]
which is equivalent to
\[
\frac{(m-1)^2}{m^2} > \frac{1}{m+1}.
\]
This holds when $m \geq 3$, as
\[
(m-1)(m^2 - 1) \geq 2(m^2 - 1) > m^2.
\]

Now we consider $m=2$. If $\rbar = 3$, we have
\[
\psi\lr{\frac3t} - \psi\lr{\frac1t} > \frac2t\cdot\frac{t^2}3 = \frac{2t}{3}.
\]
The condition $k \in (\frac t2, \frac{2t}3)$ implies that $0 < 3k -t < t$, which gives $r = 3k-t$. Thus we have $\frac rt>\frac kt > \frac 12$, which implies
\[
\psi\lr{\frac rt} - \psi\lr{\frac kt} < \frac{2k-t}{t}\lr{\frac{t^2}{(3k-t)k} + \frac{\pi^2}6 - \frac 59},
\]
using the special case of \Cref{lemma:diffgamma-est}.
The bounds on $k$ imply that
\[
\frac{(2k-t)t}{(3k-t)k} < \frac{1}{2}.
\]
We can see this by clearing denominators and factoring, which gives
\[
0 < (2t-3k)(t-k),
\]
where we know both terms are positive. Thus, we find that
\[
k\lr{\psi\lr{\frac rt} - \psi\lr{\frac kt}} < k\lr{\frac12 + \lr{\frac{\pi^2}{6} - \frac 59} \frac{2k-t}t} < \frac{2t}3.
\]
If $\rbar \geq 4$, then we have
\[
\psi\lr{\frac \rbar t} - \psi\lr{\frac1t} > \frac{3t}4.
\]
For the other side, we have
\[
k\lr{\psi\lr{\frac rt} - \psi\lr{\frac kt}} < k\lr{\psi(1) - \psi\lr{\frac kt}} < \frac{t^2 - k^2}t < \frac{3t}4,
\]
which finishes the proof that $k$ is the least common residue class modulo $t$. Thus, for each $k \in (\frac t2, t)$, the ordering will be distinct. When $t>2$, there are exactly $\frac{\varphi(t)}2$ numbers in $(\frac t2, t)$ that are coprime to $t$ because $\gcd(i, t) = \gcd(t-i,t)$. Thus, there are at least $\frac{\varphi(t)}2$ distinct orderings.
\begin{remark}
Concerning \Cref{conj:superlinear}, the following heuristic points to a strategy which might be used to prove the conjecture.
If $k' = k + mt$ for integer $m$, then $\psi_{k, t}$ and $\psi_{k', t}$ differ only by the weighing factors $\frac 1k$ and $\frac 1{k'}$. Thus, if $r < s$ and $r\prec_{k, t} s$, then there exists a ``switching point'' $S_k$ such that $r\prec_{k', t} s$ when $k' < S_k$ and $r\succ_{k', t} s$ when $k' \geq S_k$.
If for a set of pairs $(r, s)$ these switching points are sufficiently spaced out to occur at different values of $k' = k + tm$, then these $k'$ induce distinct orderings.
The difficulty of this approach lies in finding suitable pairs $(r, s)$ that have convenient switching points. However, data suggests that it is possible to find a set of pairs $(r, s)$ which has cardinality linear in $m$, after which a simple calculation would yield that $\frac{\mathscr{O}(t)}{\varphi(t)}$ is at least on the order of $\log(t)$.
\end{remark}

\printbibliography

\end{document}